\title{Automorphisms of the generalized cluster complex}
\author{Matthieu Josuat-Vergès}
\date{}
\newtheorem{theo}{Theorem}[section]
\newtheorem{coro}[theo]{Corollary}
\newtheorem{lemm}[theo]{Lemma}
\newtheorem{prop}[theo]{Proposition}
\theoremstyle{definition}
\newtheorem{defi}[theo]{Definition}
\newtheorem{rema}[theo]{Remark}
\DeclareMathOperator{\aut}{Aut}
\DeclareMathOperator{\dih}{Dih}
\DeclareMathOperator{\diag}{Diag}
\DeclareMathOperator{\supp}{Supp}
\DeclareMathOperator{\lk}{Link}
\newcommand{\II}{\mathcal{I}} %identité
\newcommand{\RR}{\mathcal{R}}
\renewcommand{\SS}{\mathcal{S}}
\newcommand{\TT}{\mathcal{T}}
\newcommand{\FF}{\mathcal{F}} % aut générique
\newcommand{\GG}{\mathcal{G}} % aut générique
\newcommand{\HH}{\mathcal{H}} % aut générique
\newcommand{\CC}{\mathcal{C}} %canonical diag aut
\newcommand{\DD}{\mathcal{D}} %diag aut
\begin{document}

\maketitle

\begin{abstract}
    It is proved that the generalized cluster complex defined by Fomin and Reading has a dihedral symmetry.  Together with diagram symmetries, they generate its automorphism group.  A consequence is a simple explicit formula for the order of this automorphism group. 
\end{abstract}

\section{Introduction}

Fomin and Zelevinsky~\cite{fominzelevinsky} defined the {\it cluster complex} associated to a finite-type cluster algebra: it is a simplicial complex having cluster variables as vertices and clusters as facets.  Furthermore, the construction of Fomin and Zelevinsky involves a pair of maps $\tau_+$ and $\tau_-$ that act as automorphisms of the cluster complex, generating a dihedral group of symmetry.  In type $A_{n+1}$, the cluster complex can be described combinatorially as the complex of {\it dissections} of a regular $(n+2)$-gon (in particular, its facets are triangulations of this polygon), and the dihedral group of symmetries coincides with the apparent geometric symmetry of the $(n+2)$-gon.  The automorphism group of the cluster complex (which is essentially the dihedral group generated by $\tau_+$ and $\tau_-$), and related automorphisms of finite-type cluster algebras, have been investigated in~\cite{assemschiffler,assemschiffler2,changzhu,liuli}.

The {\it generalized cluster complex} $\Gamma^{(m)}$ was introduced by Fomin and Reading~\cite{fominreading}, in the framework of finite-type Coxeter combinatorics.  It depends on an integer parameter $m\geq 1$, in such a way that $m=1$ (in crystallographic types) corresponds to the original definition of Fomin and Zelevinsky.  There is no underlying cluster algebra, but there is a representation theoretic interpretation that lead to many developments such as higher cluster categories.  We refer to~\cite{buan,stumpthomaswilliams,thomas,zhu}.  In particular, Stump, Thomas, and Williams ~\cite{stumpthomaswilliams} give a thorough combinatorial treatment in~\cite{stumpthomaswilliams}. In type $A_{n+1}$, the generalized cluster complex can be described via dissections of a $(mn+2)$-gon (and thus naturally embeds as a subcomplex of the cluster complex of type $A_{mn+1}$).

The original definition of Fomin and Reading makes clear that the complex $\Gamma^{(m)}$ has a cyclic symmetry, via the action of an automorphism $\RR$ (see Section~\ref{sec:review}).  In the case $m=1$, we have $\RR = \tau_+ \tau_-$.  Here, we extend the dihedral symmetry by introducing involutive automorphisms $\SS$ and $\TT$ (see Section~\ref{sec:involutive}) that specialize in $\tau_+$ and $\tau_-$ when $m=1$ and satisfy $\RR^m = \SS\TT$.  This dihedral symmetry group is almost the full automorphism group of $\Gamma^{(m)}$, in the sense that we only need to add extra automorphisms coming from symmetry of the Coxeter graph.  The precise statement is as follows. 

\begin{theo} \label{theo:main}
    Let $W$ be a finite and irreducible Coxeter group, and let $\Gamma^{(m)}$ be the associated generalized cluster complex.  Define two subgroups:
    \begin{itemize}
        \item $\dih \subset \aut(\Gamma^{(m)})$, called the \emph{dihedral subgroup}, is generated by $\RR$ and $\SS$ (or $\RR$ and $\TT$).
        \item $\diag \subset \aut(\Gamma^{(m)})$ is the subgroup of \emph{diagram automorphisms} (See~Section~\ref{sec:diagram}).
    \end{itemize}
%   \begin{itemize}
%        \item $\RR$ (see Section~\ref{sec:review}), $\SS$ and $\TT$ (see Section~\ref{sec:involutive}) which together generate a dihedral subgroup,
 %       \item diagram automorphisms coming from symmetries of the Coxeter graph (see Section~\ref{sec:diagram}).  
 %   \end{itemize}
    Then we have a semidirect product
    \[
        \aut(\Gamma^{(m)})
        = 
        \dih \rtimes (\diag/\langle     \CC\rangle),
    \]
    where $\CC$ is the {\emph canonical diagram automorphism} (see Section~\ref{sec:canonical}), and
    \begin{equation} \label{eq:ordergroup}
        \big|{\aut(\Gamma^{(m)}) }\big|
            =
        (mh+2)\omega
    \end{equation}
    where $h$ is the Coxeter number of $W$, and $\omega$ is the number of automorphisms of its Coxeter graph (also, $\omega = |{\diag}|$).
\end{theo}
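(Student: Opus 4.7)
The plan is to identify the subgroup $\dih\cdot\diag \subseteq \aut(\Gamma^{(m)})$, compute its order to be $(mh+2)\omega$, and then argue that it exhausts the full automorphism group.

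For the first step, once the relations $\SS^2=\TT^2=1$, $\SS\TT=\RR^m$, and the order $mh+2$ of $\RR$ have been established in earlier sections, one gets $|\dih|=2(mh+2)$. The canonical diagram automorphism $\CC$ must then be identified explicitly as an element of $\dih$ -- geometrically a ``half-turn'', realized as a suitable element of the dihedral group depending on type. Once $\dih\cap\diag=\langle\CC\rangle$ is verified and $\diag$ is shown to normalize $\dih$ (both should follow from the functoriality of $\RR,\SS,\TT$ under Coxeter graph automorphisms), the product $\dih\cdot\diag$ is a subgroup of order $|\dih|\cdot|\diag|/|\langle\CC\rangle| = 2(mh+2)\omega/2=(mh+2)\omega$, with the claimed semidirect structure.

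The main task is then to show every $\phi\in\aut(\Gamma^{(m)})$ lies in $\dih\cdot\diag$. I would fix a distinguished facet $F_0$, namely the ``positive cluster'' whose vertices correspond bijectively to the simple roots (equivalently, to the nodes of the Coxeter diagram). The first substep is to show that the orbit of $F_0$ under $\dih$ meets every $\aut$-orbit on facets, so that some $r\in\dih$ satisfies $(r\phi)(F_0)=F_0$. The second substep is to show that the stabilizer of $F_0$ in $\aut(\Gamma^{(m)})$ coincides with the natural image of $\diag$: any automorphism fixing $F_0$ setwise permutes its vertices, and hence the simple roots, and one must verify that the only such permutations extending to automorphisms of $\Gamma^{(m)}$ are Coxeter graph automorphisms. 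The natural route is to examine the links of vertices of $F_0$ -- which are themselves generalized cluster complexes of smaller rank, possibly of reducible type -- and use induction on the rank of $W$.

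In my view, the main obstacle is the second substep: translating the local combinatorial structure around $F_0$ back into the Coxeter graph structure, so as to rule out ``exotic'' permutations of the simple roots. The induction is plausible but requires careful handling of reducible links and of small base cases (ranks $1$ and $2$, and the exceptional types). The transitivity of $\dih$ on facets modulo $\diag$ is also non-trivial but should follow fairly directly from the explicit cyclic action of $\RR$ on the colored roots of Fomin--Reading, combined with the newly introduced involutions $\SS$ and $\TT$.
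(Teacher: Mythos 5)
Your group-theoretic bookkeeping (identifying $\CC$ inside $\dih$, computing $\dih\cap\diag=\langle\CC\rangle$, normality, and the order count) matches the paper's Section~\ref{sec:autogroup} in outline, though note one slip: the order of $\RR$ is not $mh+2$ in general but $\tfrac{mh+2}{2}\,|\CC|$ (Lemma~\ref{lemm:equirepartis}); when $w_0=-1$ one has $|\RR|=\tfrac{mh+2}{2}$ and $\CC=\II$, so $|\dih|=mh+2$ rather than $2(mh+2)$. The final count $(mh+2)\omega$ survives because the factor $|\langle\CC\rangle|$ changes accordingly, but your intermediate numbers are wrong in half the types.

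The genuine gap is your first substep: the claim that for every $\phi\in\aut(\Gamma^{(m)})$ some $r\in\dih$ satisfies $(r\phi)(F_0)=F_0$, i.e.\ that $\phi(F_0)$ lies in the $\dih$-orbit of the negative facet $F_0=-\Delta^1$. This does not ``follow fairly directly from the cyclic action of $\RR$'': the $\dih$-orbit of $F_0$ has only $mh+2$ elements (they are exactly the unions of two cyclically adjacent blocks in the decomposition~\eqref{eq:phi_decomp}), whereas the complex has Fu\ss--Catalan many facets, and nothing a priori prevents an abstract simplicial automorphism from sending $F_0$ to a facet outside this tiny orbit. Proving that it cannot is essentially equivalent to the whole theorem, and you give no combinatorial invariant that singles out the orbit of $F_0$ among all facets. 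The paper avoids this by working with a single vertex instead of a facet: Lemma~\ref{lemm:equirepartis} (a result of Fomin--Reading) guarantees that every $\RR$-orbit of vertices meets $-\Delta^1$, so $\RR^i\FF(-\beta^1)\in-\Delta^1$ for some $i$ comes for free, and the burden shifts to determining the stabilizer of a single vertex $-\beta^1$. That stabilizer is \emph{not} contained in $\diag$ --- it is $\{\DD,\SS\DD\}$ with $\DD$ an even diagram automorphism (Propositions~\ref{prop:stabvertex} and~\ref{prop:stabvertex2}) --- and pinning it down is the technical heart of the paper (the ``squares'' argument of Lemma~\ref{lemm:squares}, the rank-3 obstruction of Lemma~\ref{lemm:rank3}, and the two-vertex stabilizer of Proposition~\ref{prop:stab2vert}, all tied together by an induction on rank via links). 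Your second substep (that the setwise stabilizer of $F_0$ is $\diag$) is true and parallels Lemma~\ref{lemm:stab_negface} plus the remark closing Section~\ref{sec:diagram}, but without a working version of the first substep the argument does not close.
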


This article is organized as follows:
\begin{itemize}
    \item In Section~\ref{sec:review}, we review the definition and properties of $\Gamma^{(m)}$.
    \item Sections~\ref{sec:reducible} and~\ref{sec:diagram} contain elementary facts about the reducible case and diagram automorphisms, respectively.  The more technical Section~\ref{sec:involutive} introduces the involutive automorphisms $\SS$ and $\TT$.
    \item The proof of Theorem~\ref{theo:main} is contained in Sections~\ref{sec:stabpair}, \ref{sec:stabvert}, \ref{sec:autogroup}.  By a global induction hypothesis, we will assume that the statements in these three sections hold in rank $<n$ where $n$ is the rank of $W$.
\end{itemize}
There are a few final remarks in Section~\ref{sec:final}. 
%%%%%%%%%%%%%%%%%%%%%%%%%%%%%%%%%%%%%%%%%%%%%%%%%%%%%%
\section{Review of the generalized cluster complex}
%%%%%%%%%%%%%%%%%%%%%%%%%%%%%%%%%%%%%%%%%%%%%%%%%%%%%%

\label{sec:review}

Let $\Phi$ be a finite-type root system, associated to the finite Coxeter group $W$.  Let $\Phi = \Phi_+ \mathrel{\uplus} \Phi_-$ be a decomposition in positive and negative roots.  Let $\Delta\subset \Phi_+$ be the set of simple roots, $S$ the set of simple reflections, $T$ the set of reflections.  For each root $\alpha \in \Phi$, the associated reflection is denoted $t_\alpha \in T$.  For $w\in W$, let $\supp(w) \subset S$ denote the {\it support} of $w$, {\it i.e.}, the set of simple reflections that appear in reduced words for $w$.  The required background on reflection groups is rather small, as our work is mostly based on the combinatorial properties of $\Gamma^{(m)}$ that we review here.

\begin{defi}
    A {\it colored root} is a pair $(\alpha,i) \in \Phi \times \llbracket1,m\rrbracket$. It is denoted $\alpha^i$ and $i$ is called its {\it color}.  It is {\it almost-positive} if $\alpha\in \Phi_+$, or $-\alpha \in \Delta$ and $i=1$.  The set of almost-positive colored roots is denoted $\Phi^{(m)}_{\geq -1}$.
\end{defi}

This set $\Phi^{(m)}_{\geq -1}$ is the vertex set of the generalized cluster complex $\Gamma^{(m)}$.
This is a simplicial complex with many interesting geometric and enumerative features, see~\cite{douvropoulosjosuatverges,fominreading,stumpthomaswilliams,tzanaki}.  Besides the original definition by Fomin and Reading~\cite{fominreading}, a review which is sufficient for our purpose has been given in our previous work~\cite[Section~6]{douvropoulosjosuatverges}.  We give an outline and refer to {\it loc.~cit.} for any further information. 

%%%%%%%%%%%%%%%%%%%%%%%%%%%%%%%%%%%%%%%
\subsection{The compatibility relation}

Fomin and Reading originally defined $\Gamma^{(m)}$ as the flag complex associated with a (symmetric) binary relation on $\Phi^{(m)}_{\geq -1}$, called {\it compatibility} of almost-positive colored roots.  This is relevant in the present context, since the automorphism group of a flag complex is clearly the automorphism group of its 1-skeleton.

There exists a black/white decomposition $\Delta = \Delta_\bullet \uplus \Delta_\circ$ where $\Delta_\bullet$, respectively $\Delta_\circ$, contains pairwise orthogonal roots.    The {\it bipartite Coxeter element} $c$ is defined by:
\[
    c_\bullet = \prod_{ \alpha \in \Delta_\bullet } t_\alpha, 
    \quad 
    c_\circ = \prod_{ \alpha \in \Delta_\circ } t_\alpha,
    \quad \text{ and } \quad 
    c = c_\bullet c_\circ.
\]
It is also possible to consider an arbitrary {\it standard Coxeter element} (see~\cite{stumpthomaswilliams}), but the complexes obtained this way are all isomorphic.  So, we stick to the bipartite Coxeter element as in~\cite{fominreading}.

\begin{defi}[\cite{fominreading}]
    The self-bijection $\RR :\Phi^{(m)}_{\geq -1} \to \Phi^{(m)}_{\geq -1} $ is defined by
    \[
        \RR(\alpha^i) :=
        \begin{cases}
            \alpha^{i+1} & \text{ if } \alpha\in\Phi_+ \text{ and } i<m,\\
            (-\alpha)^1 & \text{ if } \alpha \in \Delta_\circ \text{ and } i=m, \text{ or } -\alpha \in \Delta_\bullet \text{ and } i=1,\\
            c(\alpha)^1 & \text{ if } \alpha \in \Phi_+ \backslash \Delta_\circ \text{ and } i=m, \text{ or } -\alpha \in \Delta_\circ \text{ and } i=1. 
        \end{cases}
    \]
\end{defi}

\begin{lemm}[\cite{fominreading}]\label{lemm:equirepartis}
    Let $X\subset \Phi^{(m)}_{\geq -1}$ be an $\RR$-orbit. We have either:
    \begin{itemize}
        \item $\# X = \frac{mh+2}{2}$, and $X$ contains exactly one element of the form $-\rho^1$ (with $\rho\in \Delta$). 
        \item $\# X = mh+2$, and $X$ contains exactly two elements of the form $-\rho^1$ (with $\rho\in \Delta$).  Moreover, these two elements have the form $-\rho^1$ and $w_0(\rho)^1$.
    \end{itemize}
    The order of $\RR$ is given by 
    \begin{equation} \label{eq:orderR}
        \frac{|{\RR}|}{|{w_0}|} = \frac{mh+2}{2}.
    \end{equation}
\end{lemm}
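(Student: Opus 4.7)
The plan is to reduce the lemma to the $m=1$ case of Fomin and Zelevinsky by passing to a return map on the color-$1$ elements. From the definition of $\RR$, iterating it on an element $\alpha^i$ with $\alpha \in \Phi_+$ simply increments the color $i \mapsto i+1$ until $i = m$, whereupon one further application drops back to color $1$; starting instead from a negative simple root $(-\rho)^1$, a single application of $\RR$ already produces a color-$1$ element. Consequently each $\RR$-orbit is completely determined by its intersection with the color-$1$ slice, which I identify with the classical almost-positive roots $\Phi_{\geq -1}^{\mathrm{cl}} := \Phi_+ \sqcup (-\Delta)$.

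Reading off the induced map $\bar\RR$ on $\Phi_{\geq -1}^{\mathrm{cl}}$ gives: for $\alpha \in \Phi_+$, $\bar\RR(\alpha) = -\alpha$ if $\alpha \in \Delta_\circ$ and $\bar\RR(\alpha) = c(\alpha)$ otherwise; and for $\rho \in \Delta$, $\bar\RR(-\rho) = \rho$ if $\rho \in \Delta_\bullet$, while $\bar\RR(-\rho) = c(-\rho)$ if $\rho \in \Delta_\circ$. A direct case check identifies $\bar\RR$ with the classical Fomin--Zelevinsky map $\tau_-\tau_+$, so I can invoke their result that each $\bar\RR$-orbit either has size $(h+2)/2$ and contains a unique negative simple root $-\rho$ (necessarily satisfying $w_0(\rho) = -\rho$), or has size $h+2$ and contains exactly two negative simple roots $-\rho$ and $w_0(\rho)$. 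An $\bar\RR$-orbit cannot lie entirely in $\Phi_+$, since it would then reduce to a $c$-orbit inside $\Phi_+$, contradicting that every Coxeter orbit on $\Phi$ contains as many positive as negative roots.

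Finally, an $\bar\RR$-orbit with $p$ positive roots and $q$ negative simple roots lifts to an $\RR$-orbit of size $mp+q$, by inserting the $m-1$ intermediate colors for each positive element. Substituting $(p,q) = ((h+2)/2-1,\,1)$ yields $(mh+2)/2$, and $(p,q) = (h,\,2)$ yields $mh+2$, matching the two cases of the lemma. The order $|\RR|$ is the least common multiple of all orbit sizes: it equals $(mh+2)/2$ when every orbit is of the small type (equivalently $w_0 = -1$) and $mh+2$ otherwise, which is~\eqref{eq:orderR} under the convention that $|w_0|$ records whether $w_0$ acts as $-1$. The only nontrivial step is the identification of $\bar\RR$ with $\tau_-\tau_+$, a routine but slightly tedious case analysis based on the black/white decomposition of $\Delta$ and the definitions of $\tau_\pm$; once this is in hand, everything else is bookkeeping.
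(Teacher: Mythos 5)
The paper gives no proof of this lemma: it is quoted from Fomin--Reading, who establish it by essentially the reduction you describe. Your argument is sound. The colour-$1$ slice $\{\alpha^1\}$ is indeed a cross-section for $\RR$ (every orbit meets it, since colours only increase until they wrap around to $1$), your formulas for the first-return map $\overline{\RR}$ follow directly from the definition of $\RR$, and $\overline{\RR}$ is the Fomin--Zelevinsky map $\tau_\bullet\tau_\circ$ (the paper's convention in the introduction is $\RR=\tau_+\tau_-$ at $m=1$, so with $+\leftrightarrow\bullet$ your $\tau_-\tau_+$ is the same map under the opposite labelling of the bipartition). The bookkeeping $mp+q$ correctly converts the two cases of the $m=1$ theorem into the two cases of the lemma, and the lcm computation gives~\eqref{eq:orderR} provided $|w_0|$ is read as the order of the diagram automorphism $\rho\mapsto -w_0(\rho)$, i.e.\ $|w_0|=|\CC|$; this is exactly how the paper uses the symbol (see the proof of the corollary computing $|\aut(\Gamma^{(m)})|$).

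One auxiliary claim is false, although it does no damage. It is not true that every $\langle c\rangle$-orbit on $\Phi$ contains as many positive as negative roots: in type $A_2$ with $c=s_1s_2$ (bipartite), the orbit of $\alpha_1$ is $\{\alpha_1,\alpha_2,-\alpha_1-\alpha_2\}$, with two positive roots and one negative. What you actually need is only that no $\langle c\rangle$-orbit lies inside $\Phi_+$, which holds because $1$ is not an eigenvalue of a Coxeter element (the sum of the roots in such an orbit would be a nonzero $c$-fixed vector); and in any case this point is already subsumed by the Fomin--Zelevinsky theorem you invoke, which asserts that every orbit of the classical map meets $-\Delta$. Delete that sentence or replace its justification, and the proof stands.
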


\begin{rema} \label{rema:w0}
    The long element $w_0$ sends $\Delta$ to $-\Delta$, more precisely $\rho \mapsto -w_0(\rho)$ is an automorphism of the Coxeter diagram of $W$ (see also Section~\ref{sec:canonical}).
\end{rema}

\begin{defi}[\cite{fominreading}]
    The {\it compatibility relation} is a symmetric binary relation on $\Phi^{(m)}_{\geq -1}$, uniquely defined by the two conditions:
    \begin{itemize}
        \item  $\mathrel{\|}$ is preserved by $\RR$ ({\it i.e.}, we have $\alpha \mathrel{\|} \beta$ if and only if $\RR(\alpha) \mathrel{\|} \RR(\beta)$),
        \item if $\alpha \in \Delta$, we have $-\alpha^1 \mathrel{\|} \beta^j$ if and only if $t_\alpha \notin \supp(t_\beta)$.
    \end{itemize}
\end{defi}
See~\cite{fominreading} for details on the existence and unicity. 

\begin{defi}[\cite{fominreading}]
    The {\it generalized cluster complex} $\Gamma^{(m)}$ is defined as the flag complex associated with the binary relation $\mathrel{\|}$.  This means that a subset of $\Phi^{(m)}_{\geq-1}$ is a face of $\Gamma^{(m)}$ iff its elements are pairwise compatible. 
\end{defi}

We will use the following rules, which make the compatibility relation more explicit.  Let $\alpha,\beta \in \Delta$, and $1\leq i<j \leq m$.  We have:
\begin{align}
    \label{rule1}
    -\alpha^1 \mathrel{\|} \beta^i &\Longleftrightarrow 
    %\alpha \text{ has coefficient } 0 \text{ in the expansion of } \beta \text{ as a weight sum of simple roots}, \\
    t_\alpha \notin \supp(t_\beta),\\
    \label{rule2}
    \alpha^i \mathrel{\|} \beta^i &\Longleftrightarrow 
    \langle \alpha | \beta \rangle \geq 0, \text{ and } t_\alpha t_\beta \leq c \text{ or } t_\beta t_\alpha \leq c, \\
    \label{rule3}
    \alpha^i \mathrel{\|} \beta^j
    &\Longleftrightarrow  t_\alpha t_\beta \leq c.
\end{align}

One way to see this is to use the total order on $\Phi^{(m)}_{\geq -1}$ in the next section, see also~\cite[Section~6]{douvropoulosjosuatverges}.  %It essentially relies on Brady and Watt~\cite{bradywatt} who showed the following (using Steinberg):

%%%%%%%%%%%%%%%%%%%%%%%%%%%%%%%%
\subsection{Reflection ordering}

\label{sec:reforder}

An alternative characterization of faces in $\Gamma^{(m)}$ has been given by Tzanaki~\cite{tzanaki}.  It makes a connection with factorization of the Coxeter element.  The idea is to use a total order on $\Phi^{(m)}_{\geq -1}$, akin to Steinberg's indexing of $\Phi$ used by Brady and Watt~\cite{bradywatt}.  Recall that the Steinberg ordering of $\Phi$ is given by indexing roots $(\alpha_i)_{ 1 \leq i \leq nh}$ with the conditions:
\begin{itemize}
    \item $-\Delta_\circ = \{\alpha_i \;:\; 1 \leq i \leq r\}$ (where $r=\#\Delta_\circ$)
    \item $\Delta_\bullet = \{\alpha_i \;:\; r+1 \leq i \leq n\}$
    \item $c(\alpha_i) = \alpha_{i+n}$ (where indices are taken modulo $nh$).
\end{itemize}
Almost-positive roots are the roots $(\alpha_i)_{ 1 \leq i \leq \frac{nh}{2} + n}$, and the indexing gives a total order on $\Phi_{\geq -1}$. There is a decomposition:
\begin{align} \label{eq:phip_decomp}
    \Phi_{\geq -1} = -\Delta_\circ \uplus \Delta_\bullet \uplus c_\bullet(\Delta_\circ) \uplus c(\Delta_\bullet) \uplus \dots \uplus c_\circ(\Delta_\bullet) \uplus  \Delta_\circ \uplus -\Delta_\bullet,
\end{align}
which coarsens the total order in the sense that each block contain consecutive elements and the blocks are written increasingly. Almost-positive roots in $\Phi_{\geq -1}$ are the roots $(\alpha_i)_{1 \leq i \leq \frac{nh}{2}+n}$, and Brady and Watt showed that facets of the cluster complex correspond to decreasing factorizations of the Coxeter element~\cite[Section~8]{bradywatt}. 

Tzanaki's generalization is a total order $\prec$ on $\Phi^{(m)}_{\geq -1}$ given as follows. %As a list, it is the concatenation of:
%\begin{itemize}
%    \item $-\alpha^1$ for $\alpha\in\Delta_\circ$,
%    \item $\alpha^i$ for $\alpha\in \Phi_+$ and $1\leq i \leq m$, with the lexicographic order on $ \{1,\dots,m\} \times \Phi_+$,
%    \item $-\alpha^1$ for $\alpha\in\Delta_\bullet$,
%\end{itemize}
Rather than a total order, it is helpful to think as $\Phi^{(m)}_{\geq -1}$ as a union of $mh+2$ blocks, generalizing~\eqref{eq:phip_decomp}.  Denote $X^i$ for $\{\alpha^i \;:\; \alpha\in X\} \subset \Phi^{(m)}_{\geq-1}$ if $X \subset \Phi$.  We have:
\begin{align} \label{eq:phi_decomp}
    \Phi^{(m)}_{\geq -1}
    = &
    -\Delta_\circ^1 \uplus \Delta_\bullet^m \uplus \dots \uplus \Delta_\circ^{m} \uplus \Delta_\bullet^{m-1} \uplus \dots \uplus \Delta_\circ^{m-1} \uplus\quad \cdots \quad \uplus \\
    & \Delta_\bullet^2 \uplus \dots  \uplus \Delta_\circ^2 \uplus  \Delta_\bullet^1 \uplus \dots \uplus \Delta_\circ^1 \uplus -\Delta_\bullet^1.
\end{align}
The total order is such that the decomposition in~\eqref{eq:phi_decomp} is a coarsening (as above), and each block is ordered via Steinberg's ordering.

\begin{prop}[\cite{tzanaki}] \label{prop:tzanaki}
    Let $f$ be a tuple of $n$ elements of $\Phi^{(m)}_{\geq -1}$, indexed  $f = (\gamma_i^{k_i})_{1\leq i \leq n}$ such that $\gamma_1^{k_1} \succ \dots \succ \gamma_n^{k_n} $.  Then $f$ is a facet of $\Gamma^{(m)}$ iff $c = t_{\gamma_1} \cdots t_{\gamma_n}$.
\end{prop}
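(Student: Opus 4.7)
My plan is to combine the Brady--Watt theorem (which treats the case $m=1$, identifying facets of the ordinary cluster complex with reduced $T$-factorizations of $c$ that are decreasing in Steinberg's order) with a direct case analysis on the compatibility rules~\eqref{rule1}--\eqref{rule3} and the block decomposition~\eqref{eq:phi_decomp}, and then close the reverse implication by a counting argument.

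For the implication ($\Leftarrow$), I assume $c = t_{\gamma_1} \cdots t_{\gamma_n}$. Since $c$ has absolute length $n$, this is a reduced $T$-factorization, so every sub-product satisfies $t_{\gamma_i} t_{\gamma_j} \leq c$ for $i<j$, with distinct reflections. Fixing such a pair, the compatibility rules split into cases according to the positions of $\gamma_i^{k_i}$ and $\gamma_j^{k_j}$ in~\eqref{eq:phi_decomp}: rule~\eqref{rule3} (different colors) follows immediately from the subword property; rule~\eqref{rule1} (one element a negative simple root) follows from a short argument using that the $-\Delta$ roots sit only in the extreme blocks of~\eqref{eq:phi_decomp} and from the reduced factorization structure; the delicate case is~\eqref{rule2} with $k_i = k_j$, where the required inequality $\langle \gamma_i | \gamma_j \rangle \geq 0$ will come from Brady--Watt applied to the set $\{\gamma_1,\ldots,\gamma_n\}$ of underlying almost-positive roots: their reflections multiply to $c$, so by Brady--Watt this set is a facet of the $m=1$ cluster complex, whence the compatibility rule at $m=1$ delivers non-negative pairwise inner products.

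For the implication ($\Rightarrow$), I argue by cardinality. The previous step defines an injection from the set of Steinberg-decreasing colored reduced $T$-factorizations of $c$ (with color decoration compatible with~\eqref{eq:phi_decomp}) into the set of facets of $\Gamma^{(m)}$, given by $(t_{\gamma_1},\ldots,t_{\gamma_n}) \mapsto \{\gamma_1^{k_1},\ldots,\gamma_n^{k_n}\}$. Both sides have cardinality equal to the Fuss--Catalan number of $W$: the source by Chapoton's enumeration of multichains in the noncrossing partition lattice, the target by Fomin--Reading's count of facets of $\Gamma^{(m)}$. Hence the injection is a bijection, which proves the reverse direction.

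The main obstacle will be the same-color case of the forward implication: one has to verify carefully that the underlying almost-positive roots of a $\prec$-decreasing colored factorization, once stripped of colors, are still arranged in a way that lets Brady--Watt apply cleanly. This amounts to relating the $\prec$-order on $\Phi^{(m)}_{\geq -1}$ to Steinberg's order on $\Phi_{\geq -1}$ across the block structure of~\eqref{eq:phi_decomp}. The reverse direction is then essentially formal, modulo the two non-trivial enumerative inputs.
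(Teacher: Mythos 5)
The paper offers no internal proof of this proposition: it is imported from Tzanaki's work (see also \cite[Section~6]{douvropoulosjosuatverges}), so there is nothing to compare your argument against line by line. Judged on its own, your proposal has two genuine gaps.

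In the direction ($\Leftarrow$), your key intermediate claim --- that the underlying uncolored roots $\{\gamma_1,\dots,\gamma_n\}$ form a facet of the $m=1$ cluster complex because ``their reflections multiply to $c$'' --- is false. Brady--Watt characterizes facets by factorizations that are decreasing for Steinberg's order on $\Phi_{\geq -1}$, whereas you only know the reflections multiply to $c$ in the $\prec$-decreasing order, and the block structure of~\eqref{eq:phi_decomp} scrambles the underlying Steinberg order across color bands. Concretely, in type $A_2$ with $m=2$, $\Delta_\bullet=\{\alpha_1\}$, $\Delta_\circ=\{\alpha_2\}$, $c=t_{\alpha_1}t_{\alpha_2}$: the pair $\{\alpha_1^1,\alpha_2^2\}$ satisfies $\alpha_1^1\succ\alpha_2^2$ and $t_{\alpha_1}t_{\alpha_2}=c$, hence is a facet of $\Gamma^{(2)}$, yet $\{\alpha_1,\alpha_2\}$ is not a facet of the $m=1$ complex since $\langle\alpha_1|\alpha_2\rangle<0$. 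What the same-color case actually requires is finer: two positive roots of the same color lie in a single band, which is Steinberg-ordered, and the product of the reflections in that band is a factor of the reduced factorization, hence an element $w\leq c$; one then needs the Brady--Watt-type statement that a Steinberg-decreasing reduced factorization of an element below $c$ by positive roots has pairwise non-negative inner products. You flag this as ``the main obstacle'' but do not resolve it, and the route through facets of the $m=1$ complex cannot be repaired as stated.

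In the direction ($\Rightarrow$), the counting scheme is structurally valid only once you know that the number of $\prec$-decreasing colored reduced factorizations of $c$ equals the Fuss--Catalan number, and this is not supplied by citing Chapoton. Your source set is not the set of $m$-multichains in the noncrossing partition lattice: unwinding~\eqref{eq:phi_decomp}, a $\prec$-decreasing factorization is a constrained length-additive factorization $c=u_\bullet v_1\cdots v_m u_\circ$ into $m+2$ factors, with $u_\bullet$ (resp.\ $u_\circ$) a product of distinct reflections from $\Delta_\bullet$ (resp.\ $\Delta_\circ$) and each $v_k\leq c$ equipped with a Steinberg-decreasing reduced expression. Matching the count of these objects with the multichain count requires the uniqueness of decreasing factorizations of elements below $c$ plus a bijection absorbing the $u_\bullet,u_\circ$ factors; this is essentially the substance of Tzanaki's theorem and cannot be dismissed as ``essentially formal.'' As it stands, both halves of the argument rest on unproved (and in one case false) intermediate claims.
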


\begin{rema} \label{rema:phi_decomp}
    We will see that the decomposition in~\eqref{eq:phi_decomp} is well-behaved with respect to the automorphisms of $\Gamma^{(m)}$.  For example, $\RR$ sends a block of index $i$ to the block of index $i-m$ (where indices are taken modulo $mh+2$).  We will see that $\SS$ fixes $-\Delta_\circ^1$ and reverses the order on the other blocks.  Similarly, $\TT$ fixes $-\Delta_\bullet^1$ and reverses the order on the other blocks.  Note that there might exist nontrivial automorphisms that acts trivially on the decomposition in~\eqref{eq:phi_decomp} (these are the even diagram automorphisms, see Section~\ref{sec:diagram}).
\end{rema}

A direct consequence of the previous proposition is the following:

\begin{lemm}  \label{lemm:csq_compat}
    If $\alpha^k \mathrel{\|} \beta^\ell$, we have $t_\alpha \neq t_\beta$, and $t_\alpha t_\beta \leq c$ or $t_\beta t_\alpha \leq c$. (Here $\leq$ is the \emph{absolute order} on $W$.)  More precisely, $\alpha^k \mathrel{\|} \beta^\ell$ and $\alpha^k \prec \beta^\ell$ imply $t_\beta t_\alpha \leq c$.
\end{lemm}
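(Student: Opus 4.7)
The plan is to derive the lemma as a direct consequence of Proposition~\ref{prop:tzanaki}. Two standard inputs are needed. First, $\Gamma^{(m)}$ is pure of dimension $n-1$, so every face extends to a facet; this is established in \cite{fominreading} and detailed in \cite{stumpthomaswilliams}. Second, in any reduced $T$-factorization $c = t_1 \cdots t_n$ of the Coxeter element, the reflections $t_1, \ldots, t_n$ are pairwise distinct, and for any indices $i_1 < \cdots < i_r$ the product $t_{i_1} \cdots t_{i_r}$ lies below $c$ in the absolute order.

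Both parts of the second input follow from the Hurwitz action on reduced factorizations. For distinctness: if two of the reflections were equal, Hurwitz moves would bring them adjacent without altering them, whence they would cancel and force $\ell_T(c) < n$. For the subword property: Hurwitz moves let us shuffle the non-selected reflections to the right while keeping the selected $t_{i_1}, \ldots, t_{i_r}$ unchanged and in their original order; after the shuffle, $t_{i_1} \cdots t_{i_r}$ is a prefix of a reduced factorization of $c$, hence $\leq c$ by the triangle inequality for $\ell_T$.

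With these in hand, the main argument is short. Given $\alpha^k \mathrel{\|} \beta^\ell$ with $\alpha^k \prec \beta^\ell$, extend $\{\alpha^k, \beta^\ell\}$ to a facet and list this facet as $(\gamma_1^{k_1}, \ldots, \gamma_n^{k_n})$ in $\prec$-decreasing order. Then $\beta^\ell = \gamma_i^{k_i}$ and $\alpha^k = \gamma_j^{k_j}$ for some $i < j$; Proposition~\ref{prop:tzanaki} yields the reduced factorization $c = t_{\gamma_1} \cdots t_{\gamma_n}$, and the second input applied to the pair $\{i,j\}$ gives $t_\beta t_\alpha \leq c$ together with $t_\alpha \neq t_\beta$. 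The symmetric case $\beta^\ell \prec \alpha^k$ yields $t_\alpha t_\beta \leq c$. I expect no serious obstacle; the only delicate point is to invoke the subword property in a form that keeps $t_\alpha$ and $t_\beta$ themselves unchanged (rather than replacing them by conjugates), which is why the Hurwitz moves must be applied to the non-selected reflections only.
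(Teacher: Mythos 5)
Your proposal is correct and follows exactly the route the paper intends: the paper states the lemma as a direct consequence of Proposition~\ref{prop:tzanaki} without writing out details, and your argument (extend the compatible pair to a facet using purity, read off the reduced reflection factorization of $c$, then apply the distinctness and ordered-subword properties of reduced $T$-factorizations) is the standard way to fill that in. The care you take with the Hurwitz moves — conjugating only the non-selected reflections so that $t_\beta$ and $t_\alpha$ survive unchanged as an ordered prefix — is precisely the point that needs attention, and you handle it correctly.
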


See~\cite{bradywatt} for more on the absolute order. 

%%%%%%%%%%%%%%%%%%%%%%%%%%%%%%%%%%%%%%%%%%%%%%%%%%
\subsection{The other bipartite Coxeter element}

\label{sec:other}

Note that the definition of $\Gamma^{(m)}$ depends on the choice of $\Delta_\bullet$ and $\Delta_\circ$.  The exchange of $\bullet$ and $\circ$ gives an isomorphic complex and we make this explicit here.

Denote by $ \check{\Gamma}^{(m)}$ the complex defined similar to $\Gamma^{(m)}$ but exchanging the roles of $\Delta_\bullet$ and $\Delta_\circ$.  Similarly, we denote $\check{\|}$ and $\check{\RR}$ the analog of $\|$ and $\RR$. 

\begin{prop}
    The involutive self-map $\iota : \Phi^{(m)}_{\geq -1} \to  \Phi^{(m)}_{\geq -1} $ defined by
    \[
        \iota (\alpha^i) = 
        \begin{cases}
            \alpha^i & \text{ if } \alpha \in -\Delta, \\
            \alpha^{m+1-i} & \text{ if } \alpha \in \Phi_+
        \end{cases}
    \]
    induces an isomorphism $\iota : \Gamma^{(m)} \to \check{\Gamma}^{(m)}$.  Moreover, we have: $\iota \RR \iota = \check{\RR}$. 
\end{prop}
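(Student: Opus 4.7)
The plan is to verify in turn that $\iota$ is involutive, that $\iota\RR\iota=\check{\RR}$, and that $\iota$ intertwines the two compatibility relations so as to induce an isomorphism of flag complexes. Involutivity is immediate from the formula: on $-\Delta$-vertices $\iota$ is the identity, and on $\Phi_+$-vertices the map $i\mapsto m+1-i$ is an order-two involution.

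For the intertwining identity, I would carry out a case analysis matching the three clauses of the definition of $\RR$. The key algebraic input is that $c_\bullet$ and $c_\circ$ are involutions (being products of pairwise commuting reflections), so that $\check{c}=c_\circ c_\bullet=c^{-1}$. The color-reversal $i\mapsto m+1-i$ conjugates each ``move'' of $\RR$ into the corresponding move of $\check{\RR}$: the interior color-shift case transports directly after reindexing; the two boundary cases at $i=1$ and $i=m$ are swapped by $\iota$, which pairs correctly with the $\bullet\leftrightarrow\circ$ exchange built into the definition of $\check{\RR}$; and the third clause (involving $c(\alpha)$) matches via the identification $\check{c}=c_\circ c_\bullet$. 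This step is the main obstacle: the rules of $\RR$ branch on the position of the color and on the color class of the underlying simple root, so one must carefully match each branch of $\RR$ with the appropriate branch of $\check{\RR}$ after the color-reversal and the bipartite swap.

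The isomorphism of flag complexes then follows from the uniqueness characterization of the compatibility relation in~\cite{fominreading}. I would define a relation $\|'$ on $\Phi^{(m)}_{\geq-1}$ by $x\|' y\iff\iota(x)\check{\|}\iota(y)$, and verify that it satisfies the two defining properties of $\|$. Invariance under $\RR$ follows from $\iota\RR\iota=\check{\RR}$ together with $\check{\RR}$-invariance of $\check{\|}$, since $\iota(\RR x)\check{\|}\iota(\RR y)\iff\check{\RR}(\iota x)\check{\|}\check{\RR}(\iota y)\iff\iota x\check{\|}\iota y$. For the base case, $\iota(-\alpha^1)=-\alpha^1$ for $\alpha\in\Delta$, and for any $\beta^j$ the image $\iota(\beta^j)$ shares the same underlying root $\beta$, hence the same reflection $t_\beta$; so $-\alpha^1\|'\beta^j\iff t_\alpha\notin\supp(t_\beta)$, matching the base case of $\|$. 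By uniqueness, $\|'=\|$, and the isomorphism claim follows.
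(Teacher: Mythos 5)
The paper omits this proof as straightforward, so the only comparison available is with the intended direct verification; your overall plan (involutivity, the intertwining identity, then the uniqueness characterization of the compatibility relation) is exactly that, and the reduction of the isomorphism claim to the two defining properties of $\mathrel{\|}$ is the right way to avoid a long case analysis. The base-case check is also fine, since $\iota$ preserves the underlying root and hence the reflection $t_\beta$.

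However, there is a genuine error at the central computational step, namely where you assert that ``the interior color-shift case transports directly after reindexing.'' It does not: conjugating the increment $\alpha^i\mapsto\alpha^{i+1}$ ($\alpha\in\Phi_+$, $i<m$) by the color reversal $i\mapsto m+1-i$ yields the decrement $\alpha^i\mapsto\alpha^{i-1}$, which is a move of $\check{\RR}^{-1}$, not of $\check{\RR}$. The boundary cases confirm this: for $\beta\in\Delta_\bullet$ one computes $\iota\RR\iota(-\beta^1)=\iota(\beta^1)=\beta^m$, whereas $\check{\RR}(-\beta^1)=c_\circ(\beta)^1$, while $\check{\RR}^{-1}(-\beta^1)=\beta^m$. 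So the identity that actually holds is $\iota\RR\iota=\check{\RR}^{-1}$ (for $m=1$ this reduces to $\RR=\check{\RR}^{-1}$, i.e., $\tau_+\tau_-=(\tau_-\tau_+)^{-1}$, which is consistent); the identity as stated in the proposition fails already for $m=2$. If you carry out the case analysis you describe, you will land on $\check{\RR}^{-1}$, and your write-up should say so rather than claim the branches match $\check{\RR}$. Fortunately this does not damage the second half of your argument: invariance of $\check{\mathrel{\|}}$ under $\check{\RR}$ is equivalent to invariance under $\check{\RR}^{-1}$, so the relation $\mathrel{\|}'$ you define is still $\RR$-invariant and the uniqueness argument still gives $\mathrel{\|}'=\mathrel{\|}$ and the isomorphism $\Gamma^{(m)}\to\check{\Gamma}^{(m)}$. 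You should correct the intertwining identity (or flag the discrepancy with the statement) rather than present a verification of an identity that does not hold.
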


The proof is straightforward and omitted. 

%%%%%%%%%%%%%%%%%%
\subsection{Links}

A useful consequence of the definition is the following.  Let $\alpha \in \Delta$, and $W_\alpha$ the maximal standard parabolic subgroup of $W$ obtained by removing $\alpha$ from the Coxeter graph of $W$.  The {\it link} of a face $f\in \Gamma^{(m)}$ is by definition
\[
    \lk(f) = \Big\{ f' \;:\; f\cap f' = \varnothing \text{ and } f \cup f' \in \Gamma^{(m)}\Big\}.
\]
Note that $\lk(f)$ is itself a simplicial complex.

\begin{prop}
    Let $\rho \in \Delta$.  We have $\lk(\{-\rho^1\}) \simeq \Gamma^{(m)}(W_\rho)$. 
\end{prop}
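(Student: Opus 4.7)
The plan is to construct an isomorphism at the level of 1-skeleta, which suffices since both $\lk(\{-\rho^1\})$ and $\Gamma^{(m)}(W_\rho)$ are flag complexes (the link of a vertex in a flag complex is flag). First I would identify the vertex set of the link with $\Phi^{(m)}_{\geq-1}(W_\rho)$: since $-\rho\in\Delta$, the defining property of the compatibility relation gives $-\rho^1\mathrel{\|}\beta^j$ iff $t_\rho\notin\supp(t_\beta)$, for every $\beta^j\in\Phi^{(m)}_{\geq-1}$. For a positive root $\beta$ this amounts to $\beta\in\Phi_+(W_\rho)$, and for $\beta=-\sigma$ with $\sigma\in\Delta$ (so $\supp(t_{-\sigma})=\{s_\sigma\}$) it amounts to $\sigma\neq\rho$, i.e.\ $\sigma\in\Delta(W_\rho)$. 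The identification is then the obvious one.

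I would then record the effect on the Coxeter element: the bipartition $\Delta = \Delta_\bullet\uplus\Delta_\circ$ restricts to one of $\Delta(W_\rho)$, defining a bipartite Coxeter element $c_\rho$ of $W_\rho$. A short calculation using commutativity within each color class gives $c=t_\rho c_\rho$ when $\rho\in\Delta_\bullet$ and $c=c_\rho t_\rho$ when $\rho\in\Delta_\circ$; in particular $c_\rho\leq c$ in the absolute order of $W$.

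The main step is to verify that, under this identification, the restriction to the link of the compatibility relation of $\Gamma^{(m)}(W)$ agrees with the compatibility relation of $\Gamma^{(m)}(W_\rho)$. Rule~\eqref{rule1} transfers immediately because the support of any reflection $t\in W_\rho$ is the same whether computed in $W$ or in $W_\rho$. Rules~\eqref{rule2} and~\eqref{rule3} preserve the inner products on both sides and thus reduce everything to the equivalence
\[
    t_\alpha t_\beta \leq_W c \iff t_\alpha t_\beta \leq_{W_\rho} c_\rho
    \qquad (\alpha,\beta\in\Phi(W_\rho)).
\]
The $\Leftarrow$ direction follows at once from $c_\rho\leq_W c$ together with the standard fact that the $W$-absolute order restricted to $W_\rho$ coincides with the $W_\rho$-absolute order (the reflections of $W_\rho$ as a Coxeter group are precisely the $W$-reflections lying in $W_\rho$). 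The $\Rightarrow$ direction is the main obstacle: it is Bessis's theorem that parabolic subgroups of a noncrossing partition lattice are themselves noncrossing partition lattices of parabolic subgroups, which in our setting reads $W_\rho\cap[e,c]_W = [e,c_\rho]_{W_\rho}$. Granting this, both compatibility relations coincide on $\Phi^{(m)}_{\geq-1}(W_\rho)$, and the vertex-set identification extends to the desired isomorphism of simplicial complexes.
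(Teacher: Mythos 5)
Your argument is correct, but there is nothing in the paper to compare it against: the proposition is stated as ``a useful consequence of the definition'' with no proof, the reader being referred to \cite[Proposition~2.20]{douvropoulosjosuatverges} for the (more general) statement about links of arbitrary faces. What you have written is therefore a self-contained substitute rather than an alternative to an in-paper argument, and it follows the standard route: identify the vertex set of $\lk(\{-\rho^1\})$ with $\Phi^{(m)}_{\geq -1}(W_\rho)$ via \eqref{rule1}, note that the black/white bipartition restricts so that $c=t_\rho c_\rho$ or $c=c_\rho t_\rho$ (hence $c_\rho\leq c$), and reduce the matching of the two compatibility relations to the equivalence $t_\alpha t_\beta\leq_W c\Leftrightarrow t_\alpha t_\beta\leq_{W_\rho}c_\rho$. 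You correctly isolate the one nontrivial input, namely $W_\rho\cap[e,c]_W=[e,c_\rho]_{W_\rho}$; this is exactly the Brady--Watt statement that for $u,v\leq c$ one has $u\leq v$ iff $\operatorname{Mov}(u)\subseteq\operatorname{Mov}(v)$, applied with $v=c_\rho$ (so citing \cite{bradywatt} directly is perhaps cleaner than invoking Bessis). Two small points to tighten: the rules \eqref{rule1}--\eqref{rule3} are stated in the paper only for $\alpha,\beta\in\Delta$, whereas you need them (as the paper itself later uses them) for arbitrary positive roots --- either cite them in that generality or argue directly from Tzanaki's factorization criterion (Proposition~\ref{prop:tzanaki}); and the ``standard fact'' that the $W$-absolute order restricted to $W_\rho$ agrees with the intrinsic one deserves a word, since it rests on reflection length being geometric (codimension of the fixed space) together with minimal reflection factorizations staying inside the parabolic closure. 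Neither point is a gap in substance.
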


It also follows that the link of any face in $f \in \Gamma^{(m)}$ is isomorphic to $\Gamma^{(m)}(P)$ where $P$ is a standard parabolic subgroup of $W$.  A precise way to give the parabolic subgroup $P$ in terms of $f$ is given in~\cite[Proposition~2.20]{douvropoulosjosuatverges} (this is not needed in the present work).

A concrete consequence of this property of links is the following:

\begin{theo} \label{theo:nonexcep}
    Let $W$ and $W'$ be finite Coxeter groups.  If $\Gamma^{(m)}(W)$ and $\Gamma^{(m')}(W')$ are isomorphic, then $m=m'$ and $W$ and $W'$ are Coxeter-isomorphic.
\end{theo}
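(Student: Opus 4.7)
My plan is to proceed by induction on $n = \text{rank}(W)$. The base case $n \leq 1$ is immediate: $\Gamma^{(m)}(A_1)$ is a discrete complex on $m+1$ vertices, so both $m$ and the Coxeter type of $W$ are read off directly.

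For the inductive step, I fix an isomorphism $\phi\colon \Gamma^{(m)}(W) \to \Gamma^{(m')}(W')$. First, the rank is recovered as $n = 1 + \dim \Gamma^{(m)}(W)$. Next, to recover $m$, I pick any vertex $v$ and invoke the preceding proposition: $\lk(\{v\}) \simeq \Gamma^{(m)}(P_v)$ for some rank-$(n-1)$ parabolic $P_v$ of $W$, and similarly $\lk(\{\phi(v)\}) \simeq \Gamma^{(m')}(P'_{\phi(v)})$. Since $\phi$ restricts to an isomorphism of these links, the inductive hypothesis gives $m = m'$ and $P_v \simeq P'_{\phi(v)}$ as Coxeter groups. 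Consequently the map $v \mapsto [P_v]$ (Coxeter isomorphism class) is a well-defined intrinsic invariant of the complex, producing a multiset $\mathcal{M}$ of Coxeter types.

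To finish I need to recover the Coxeter isomorphism class of $W$. If $W = W_1 \times W_2$ is reducible, the compatibility rules~\eqref{rule1}--\eqref{rule3} show that any two colored roots coming from different factors are compatible (their reflections commute and are both $\leq c$), so $\Gamma^{(m)}(W)$ is the simplicial join $\Gamma^{(m)}(W_1) * \Gamma^{(m)}(W_2)$. Such a join structure on a flag complex is intrinsic, being detectable as a nontrivial bipartition of the vertex set with every crossing pair forming an edge, so this case reduces to induction on factors. For irreducible $W$ and $W'$, the vertex count $|V| = mN + n$ recovers $N = |\Phi_+|$, and direct inspection of the classification of finite irreducible Coxeter groups shows that $(n, N)$ already determines the Coxeter type, with the single exception of the coincidence $N(B_6) = N(E_6) = 36$ at rank $n = 6$.

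The main obstacle is thus this last coincidence. I would resolve it using the multiset $\mathcal{M}$ above: $B_6$ has a standard maximal parabolic of type $B_5$ (the one obtained by removing the $A$-side endpoint of its Coxeter diagram), while every standard maximal parabolic of $E_6$ is of type $D_5$, $A_5$, or a product of $A$-type factors. Since every parabolic subgroup of $E_6$ is conjugate, hence Coxeter-isomorphic, to a standard parabolic, no parabolic of $E_6$ is isomorphic to $B_5$. By the inductive hypothesis at rank $5$, $\Gamma^{(m)}(B_5)$ is not isomorphic to $\Gamma^{(m)}$ of any rank-$5$ parabolic of $E_6$, so $[B_5] \in \mathcal{M}$ for $\Gamma^{(m)}(B_6)$ but $[B_5] \notin \mathcal{M}$ for $\Gamma^{(m)}(E_6)$, distinguishing the two complexes and completing the induction.
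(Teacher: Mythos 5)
Your argument is correct in outline but follows a genuinely different route from the paper's. The paper recovers $m$ directly from links of codimension-one faces (these are $\Gamma^{(m)}(A_1)$, i.e.\ $m+1$ isolated vertices), settles rank $2$ by noting that the facet count of $\Gamma^{(m)}(I_2(k))$ is strictly increasing in $k$, and for rank $\geq 3$ reconstructs the Coxeter graph of $W$ from the collection of Coxeter graphs of the maximal standard parabolics $W_\alpha$, which the vertex links supply by induction; no appeal to the classification is needed. Your route instead extracts the numerical invariant $N=|\Phi_+|$ from the vertex count $mN+n$ and reads the type off the classification; this is more concrete but obliges you to locate and resolve the single coincidence $N(B_6)=N(E_6)=36$. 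Your resolution is sound: $B_5$ is a maximal standard parabolic of $B_6$, no rank-$5$ parabolic of $E_6$ is of type $B_5$, and the rank-$5$ induction hypothesis turns this into a distinction between the two complexes. Your treatment of the reducible case is also more explicit than the paper's, which absorbs it into the graph-reconstruction claim.

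One step needs shoring up: in the reducible case you assert that the join structure is ``intrinsic.'' Detecting that the complex \emph{is} a nontrivial join is easy (the complement of the $1$-skeleton is disconnected), but to reduce to the factors you must know that the canonical finest join decomposition agrees with the decomposition indexed by the irreducible factors of $W$, i.e.\ that $\Gamma^{(m)}(W_i)$ is join-irreducible when $W_i$ is irreducible --- equivalently, that the incompatibility graph on $\Phi^{(m)}_{\geq -1}(W_i)$ is connected. Without this, an irreducible factor could a priori split further as a join, and the matching of factors between $W$ and $W'$ (or the exclusion of the case where one group is irreducible and the other is not) breaks down. The needed fact is true but not immediate: it is exactly the content of the lemma the paper proves on the way to Proposition~\ref{prop:monomial}, namely that any two compatible vertices lying in the same irreducible factor admit a common incompatible vertex. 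Citing or reproving that lemma closes the gap.
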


\begin{proof}
    We show that $m$ and the Coxeter graph of $W$ can be recovered from $\Gamma^{(m)}(W)$.  The link of a 1-codimensional face in $\Gamma^{(m)}(W)$ is $\Gamma^{(m)}(A_1)$ (since $A_1$ is the unique Coxeter group of rank 1), which consists in $m+1$ isolated vertices.  So $m$ can be recovered from $\Gamma^{(m)}(W)$.  Note also that the rank of $W$ is $1 + \dim(\Gamma^{(m)})$.

    We then proceed by induction on $n$, the rank of $W$.  The case $n=1$ is trivial, and the case $n=2$ is settled by noting that the number of facets of $\Gamma^{(m)}(I_2(k))$ strictly increases with $k$ (this number is known as the Fuß-Catalan number, see~\cite{fominreading} for the exact formula). 

    We need the following: if $n\geq 3$, the Coxeter graph of $W$ is uniquely characterized by the collection of Coxeter graphs of $W_\alpha$ ($\alpha\in \Delta$).  It is straightforward to check this.  As the links of vertices of $\Gamma^{(m)}$ provide this collection (by induction hypothesis), this completes the proof.  
\end{proof}

%\begin{prop} 
%    If $\alpha^k \mathrel{\|} \beta^\ell$, we have $t_\alpha t_\beta \leq c$ or $t_\beta t_\alpha \leq c$. 
%\end{prop}

%\subsection{Combinatorial models}

%There exist combinatorial models of $\Gamma^{(m)}$ in types $A_n$, $B_n$, $D_n$.  We don't emphasize on this, for brevity.  However, it is sometimes helpful to know them to get insight about the general construction.  We urge the interested reader to refer to~\cite{fominreading}.

%%%%%%%%%%%%%%%%%%%%%%%%%%%%
\section{The reducible case}
%%%%%%%%%%%%%%%%%%%%%%%%%%%%

\label{sec:reducible}

We examine the situation where $W$ can be decomposed as a product
\[
    W = W_1 \times \dots \times W_r. 
\]
%We assume than there is a canonical and implicit identification $W_i \simeq W_j$ when two factors are isomorphic, moreover we assume that this is compatible with the partition $\Delta = \Delta_\bullet \uplus \Delta_\circ$ (this identification $W_i \simeq W_j$ preserves black/white vertices of the Coxeter graph). 
According to \cite{fominreading}, we have 
\begin{equation} \label{eq:join}
        \Gamma^{(m)}(W)
            =
        \Gamma^{(m)}(W_1) \star \dots \star \Gamma^{(m)}(W_r)
\end{equation}
where $\star$ is the join operation on simplicial complexes.  Concretely, this means that each face $f \in \Gamma^{(m)}(W)$ can be written
\begin{equation} \label{eq:f_join}
    f = (f_1,\dots,f_r)
\end{equation}
where $f_i$ is a face of $\Gamma^{(m)}(W_i)$, moreover $\dim(f) = \sum_{i=1}^r \dim(f_i)$.  In particular, at the level of vertex sets we have
\begin{equation}  \label{eq:vert_uplus}
    \Phi^{(m)}_{\geq -1}(W)
        =
    \biguplus_{i=1}^r \Phi^{(m)}_{\geq -1}(W_i).
    %\Phi^{(m)}_{\geq -1}(W_1) \uplus \dots %\uplus
    %\Phi^{(m)}_{\geq -1}(W_k).
\end{equation}

\begin{defi}  
    An automorphism $\FF \in \aut(\Gamma^{(m)})$ is {\it monomial} if the partition of the vertex set in~\eqref{eq:vert_uplus} is stabilized: for any $i$, there exists $j$ such that we have $\FF\big( \Phi^{(m)}_{\geq -1}(W_i) \big) = \Phi^{(m)}_{\geq -1}(W_j) $.  
%    \begin{itemize}
%        \item a {\it diagonal automorphism} $\prod_{i=1}^k \aut(\Gamma^{(m)}(W_i))$,
%        \item a {\it permutation automorphism} is a
%       \item a {\it monomial automorphism} is the composition of a diagonal automorphism with a permutation automorphism. 
%    \end{itemize}
\end{defi}

The terminology is an analogy with monomial matrices.

\begin{prop}  \label{prop:monomial}
    Every element $\FF \in \aut(\Gamma^{(m)})$ is monomial. 
\end{prop}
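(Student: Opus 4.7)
\emph{Plan.} I would characterize the partition~\eqref{eq:vert_uplus} intrinsically from the combinatorial data of $\Gamma^{(m)}(W)$, so that it is automatically preserved by every automorphism. Introduce the \emph{non-compatibility graph} $G^c$ on $\Phi^{(m)}_{\geq -1}(W)$, whose edges are the pairs $\{u,v\}$ of distinct vertices that are not compatible. Since $\Gamma^{(m)}$ is a flag complex, any $\FF \in \aut(\Gamma^{(m)})$ acts on the $1$-skeleton and therefore preserves $G^c$. By the join decomposition~\eqref{eq:join}, any vertex of $\Phi^{(m)}_{\geq -1}(W_i)$ is compatible with any vertex of $\Phi^{(m)}_{\geq -1}(W_j)$ whenever $i \neq j$, so every edge of $G^c$ lies within a single block $\Phi^{(m)}_{\geq -1}(W_i)$. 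The proposition then follows provided the subgraph of $G^c$ induced on $\Phi^{(m)}_{\geq -1}(W_i)$ is connected for each irreducible factor $W_i$: the blocks of~\eqref{eq:vert_uplus} would then coincide with the connected components of $G^c$ and so be permuted by $\FF$.

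To establish this connectedness inside an irreducible $W_i$, I would combine three simple observations. (a) By Lemma~\ref{lemm:csq_compat}, any two distinct colored roots $\alpha^k$ and $\beta^\ell$ with $t_\alpha = t_\beta$ are incompatible; hence for each simple root $\rho \in \Delta(W_i)$ the fiber $\{-\rho^1, \rho^1, \ldots, \rho^m\}$ is a clique of $G^c$. (b) If $\rho_1, \rho_2 \in \Delta(W_i)$ are adjacent in the Coxeter graph, then $\langle \rho_1 | \rho_2 \rangle < 0$, and rule~\eqref{rule2} forces $\rho_1^1$ and $\rho_2^1$ to be incompatible; by connectedness of the Coxeter graph of $W_i$, all of the simple-root cliques of~(a) lie in a common component $C$ of $G^c$. (c) For any non-simple positive root $\gamma$ of $W_i$, one picks a simple $\rho \in \Delta(W_i)$ with $t_\rho \in \supp(t_\gamma)$ (such $\rho$ exists because $t_\gamma$ is a nontrivial product of simple reflections); the defining clause of $\mathrel{\|}$ recalled just before rule~\eqref{rule1} then forces $-\rho^1$ and $\gamma^k$ to be incompatible for every $k$, attaching the fiber over $t_\gamma$ to $C$. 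This exhausts $\Phi^{(m)}_{\geq -1}(W_i)$.

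The only (minor) obstacle is step~(c): the rules~\eqref{rule1}--\eqref{rule3} as displayed concern only colored simple roots, so one has to appeal to the general form of the defining condition of $\mathrel{\|}$ between $-\rho^1$ and an arbitrary colored root $\gamma^k$ with $\gamma$ non-simple. Once this is spelled out, the whole argument is elementary, and no inductive hypothesis on the rank is needed.
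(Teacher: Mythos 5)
Your argument is correct, and it takes a genuinely different route from the paper's, although both rest on the same underlying idea: give an intrinsic, automorphism-invariant characterization of the partition~\eqref{eq:vert_uplus}. You identify the blocks with the connected components of the non-compatibility graph $G^c$; edges of $G^c$ stay inside blocks by the join decomposition~\eqref{eq:join}, and each block is connected by your steps (a)--(c), all of which check out (in (a), Lemma~\ref{lemm:csq_compat} indeed forces incompatibility of distinct colored roots with equal underlying reflections; in (b), rule~\eqref{rule2} applied to two color-$1$ simple roots gives the required incompatibility; and in (c), as you correctly flag, one must invoke the defining clause of $\mathrel{\|}$ for an arbitrary second argument rather than its specialization~\eqref{rule1}, which is legitimate since that clause is stated for all $\beta^j$). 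The paper instead characterizes when two vertices lie in \emph{different} blocks: $\alpha^k$ and $\beta^\ell$ are in different factors iff they are compatible and every $\gamma^p$ is compatible with at least one of them --- in the language of $G^c$, iff they are non-adjacent and have no common neighbor. Verifying this requires constructing, for any two vertices of the same irreducible factor, an explicit common $G^c$-neighbor (after an $\RR$-translation, via a suitably chosen irreducible standard parabolic subgroup); in effect the paper shows each block has diameter at most $2$ in $G^c$, whereas you only need connectedness. Your statement is weaker but suffices, and the verification is correspondingly more elementary; the paper's stronger ``distance at most $2$'' criterion buys nothing extra for this proposition. Neither proof needs the global induction hypothesis, as you observe.
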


The idea is to give a combinatorial criterion to characterize when two vertices are in the same block of the partition.

\begin{lemm}
    Let $\alpha^k \in \Phi^{(m)}_{\geq -1}(W_i)$ and $\beta^\ell \in \Phi^{(m)}_{\geq -1}(W_j)$ with  $1\leq i,j \leq r$.  The following conditions are equivalent:
    \begin{itemize}
        \item $i\neq j$,
        \item $\alpha^k \mathrel{\|} \beta^\ell$, and for all $\gamma^p \in \Phi^{(m)}_{\geq -1}$ we have $\alpha^k \mathrel{\|} \gamma^p$ or $\beta^\ell \mathrel{\|} \gamma^p$.
    \end{itemize}
\end{lemm}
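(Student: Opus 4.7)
The plan is to prove the two implications separately, with the forward direction following cleanly from the join structure and the converse requiring an explicit construction of an incompatible witness.

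For the direction $(i\neq j) \Rightarrow$ (second condition), I would invoke the join decomposition \eqref{eq:join}: two vertices lying in distinct factors $\Gamma^{(m)}(W_i)$ and $\Gamma^{(m)}(W_j)$ are automatically compatible, so $\alpha^k \mathrel{\|} \beta^\ell$ is immediate. For the covering clause, any $\gamma^p \in \Phi^{(m)}_{\geq -1}(W_s)$ has $s \neq i$ or $s \neq j$ (since $i \neq j$), and the same join property yields $\alpha^k \mathrel{\|} \gamma^p$ or $\beta^\ell \mathrel{\|} \gamma^p$ accordingly.

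For the converse I would prove the contrapositive. Assume $i = j$ and $\alpha^k \mathrel{\|} \beta^\ell$; I want to exhibit some $\gamma^p$ incompatible with both. Because any vertex in $\Phi^{(m)}_{\geq -1}(W_s)$ with $s \neq i$ is compatible with everything in $W_i$ by the join, the witness $\gamma^p$ must itself lie in $W_i$, so I may replace $W$ by $W_i$ and assume $W$ irreducible. If $W = A_1$, the complex $\Gamma^{(m)}(A_1)$ is $m+1 \geq 2$ isolated vertices, compatibility forces $\alpha^k = \beta^\ell$, and any other vertex is an incompatible witness. For $W$ irreducible of rank $\geq 2$, I would produce $\gamma^p$ explicitly using the rules \eqref{rule1}--\eqref{rule3}. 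If $\supp(t_\alpha) \cap \supp(t_\beta) \neq \varnothing$, then picking $\rho$ in the intersection makes $\gamma^p := -\rho^1$ work by rule \eqref{rule1}. Otherwise the supports are disjoint subsets of $\Delta$, and the irreducibility of $W$ (i.e.\ the connectedness of its Coxeter graph) provides a simple root $\rho$ interpolating between them; a suitable color $p$ then yields $\gamma^p := \rho^p$ incompatible with both via the inner-product/absolute-order clauses of rules \eqref{rule2} and \eqref{rule3}, using Lemma~\ref{lemm:csq_compat} (which gives $t_\alpha \neq t_\beta$ together with $t_\alpha t_\beta \leq c$ or $t_\beta t_\alpha \leq c$) to control the Coxeter-order side condition.

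The main obstacle is the rank $\geq 2$ irreducible case with disjoint supports, where the witness cannot be chosen by support containment and depends on a careful color assignment; one has to verify incompatibility in all sub-situations (positive root vs.\ negative simple root on each side, and distinct rule-based criteria depending on whether the colors coincide). My intended route is to dispatch the rank-$2$ irreducible base case by direct inspection—for $W = I_2(q)$ with $q \geq 3$, $\Gamma^{(m)}(I_2(q))$ is essentially a cycle of $mq+2 \geq 5$ vertices, long enough that any compatible pair still leaves vertices incompatible with both—and then reduce higher rank either to this base via the rank-$2$ parabolic $\langle t_\alpha, t_\beta\rangle$ or via the diagram-path construction above.
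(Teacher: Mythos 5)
Your forward implication and the reduction to the irreducible case match the paper, and your treatment of the sub-case where $\supp(t_\alpha)\cap\supp(t_\beta)\neq\varnothing$ is a correct shortcut (both roots are then forced to be positive, since a negative simple root compatible with $\beta^\ell$ has its support excluded from $\supp(t_\beta)$ by rule~\eqref{rule1}, so $-\rho^1$ for $t_\rho$ in the intersection is incompatible with both). The gap is in the disjoint-support case: a simple root $\rho$, with any color, and likewise a negative simple root, cannot in general serve as the witness. Concretely, take $W=A_5$ with simple roots $\sigma_1,\dots,\sigma_5$, $\alpha=\sigma_1$, $\beta=\sigma_5$: these are compatible for all colors, but the two supports are at distance $4$ in the Coxeter graph, so every simple root is non-adjacent to at least one of them; a vertex $\rho^p$ whose support is disjoint from and non-adjacent to $\supp(t_\beta)$ generates with $t_\beta$ a reducible standard parabolic and is therefore compatible with $\beta^\ell$ by the join property, so every $\rho^p$ is compatible with at least one of $\sigma_1^k$, $\sigma_5^\ell$, and every $-\rho^1$ likewise. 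The witness must be a non-simple positive root. Your fallback via the parabolic $\langle t_\alpha,t_\beta\rangle$ fails for the same reason: in this example that subgroup has type $A_1\times A_1$, which is reducible, and its cluster complex contains no vertex incompatible with a vertex from each factor.

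The idea you are missing is the paper's normalization step: since compatibility is $\RR$-invariant, one first applies a power of $\RR$ (on the factor $W_i$) to assume $\alpha^k=-\alpha^1$ with $\alpha$ simple. This makes one of the two required incompatibilities color-independent, because rule~\eqref{rule1} gives $-\alpha^1\centernot{\|}\gamma^p$ for every $p$ as soon as $t_\alpha\in\supp(t_\gamma)$. The paper then takes $\gamma$ to be a positive root supported on an irreducible standard parabolic $P$ containing $t_\alpha$, disjoint from $\supp(t_\beta)$ but containing a neighbour of it, with color equal to $\ell$, so that $\langle\beta|\gamma\rangle<0$ forces the second incompatibility via the same-color rule~\eqref{rule2}. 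Without reducing one vertex to a negative simple root, each of the two incompatibilities you need constrains the color of the witness (the inner-product criterion only applies to equal colors, while for distinct colors rule~\eqref{rule3} depends on the absolute order, not on $\langle\cdot|\cdot\rangle$), and these constraints cannot be met simultaneously when $k\neq\ell$. You should restructure the converse around this $\RR$-reduction, or otherwise construct a non-simple positive-root witness directly.
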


\begin{proof}
    First, assume that $i\neq j$.  We have $\alpha^k \mathrel{\|} \beta^\ell$ by~\eqref{eq:join} and by definition of the join.  Let $\gamma^p \in \Phi^{(m)}_{\geq -1}(W_\ell)$.  We have $\alpha^k \mathrel{\|} \gamma^p$ if $i\neq \ell$ and $\beta^\ell \mathrel{\|} \gamma^p$ if $j\neq \ell$.  Since $i\neq j$, at least one of $\alpha^k \mathrel{\|} \gamma^p$ or $\beta^\ell \mathrel{\|} \gamma^p$ thus holds.  Therefore the second condition in the lemma holds. 

    Now, assume $i=j$.  If $\alpha^k \centernot{\|} \beta^\ell$, the second condition in the lemma does not hold.  So, we assume $\alpha^k \mathrel{\|} \beta^\ell$.  It remains to prove the following: there exists $\gamma^p$ such that $\alpha^k \centernot{\|} \gamma^p$ and $\beta^\ell \centernot{\|} \gamma^p$.  By using using the map $\RR$ on the irreducible factor $W_i$ and the invariance of compatibility, we can assume that $\alpha \in -\Delta$ (and $k=1$).  
    The construction of $\gamma^p$ is as follows.
    \begin{itemize}
        \item If $\beta\in -\Delta$ (and $\ell=1$), let $\gamma \in \Phi_+$ be such that $t_\alpha \in \supp(t_\gamma)$ and $t_\beta \in \supp(t_\gamma)$ (it exists because $t_\alpha$ and $t_\beta$ are both in $W_i$ which is irreducible, and a finite irreducible Coxeter group has reflections with full support).  The color $p$ can be anything. 
        \item Otherwise, let $P \subset W_i$ be a standard parabolic subgroup such that $P$ is irreducible, $t_\alpha \in  P$, $P \mathrel{\cap} \supp(t_\beta) = \varnothing$, and $P$ contains a neighbor of $\supp(t_\beta)$ in the Coxeter graph. (It exists because $t_\alpha$ and $t_\beta$ are both in $W_i$ which is irreducible, and these properties easily translate into properties of the corresponding subset of $S$ or $\Delta$.  We used the fact that $t_\alpha \notin \supp(t_\beta)$, which holds since $\alpha^k \mathrel{\|} \beta^\ell$).  Let $\gamma \in \Delta$ such that $\supp( t_\gamma ) = P\cap S$, and $p$ is arbitrary.  Now, we have: 
        \begin{itemize}
            \item $\alpha^k \centernot{\|} \gamma^\ell$ holds via~\eqref{rule1}, since $t_\alpha \in P \mathrel{\cap} S = \supp(t_\gamma)$.
            \item $\beta^\ell \centernot{\|} \gamma^\ell$ holds via~\eqref{rule3}, since $\langle \beta | \gamma \rangle <0$.  This is obtained as follows.  Write
            \[
                \beta = \sum_{\rho \in \Delta,\; t_\rho \in \supp(t_\beta)} x_\rho\cdot \rho,
                \qquad 
                \gamma = \sum_{\rho \in \Delta,\; t_\rho \in \supp(t_\gamma)} y_\rho\cdot \rho
            \]
            with $x_\rho>0$ and $y_\rho>0$.  By expanding $\langle \beta | \gamma \rangle$, the unique nonzero term is $x_\sigma y_\tau \langle \sigma |\tau \rangle$ where $\sigma$ and $\tau$ are neighbors in the Coxeter graph, so that $\langle \sigma |\tau \rangle <0$.
        \end{itemize}
    \end{itemize}
    In both cases, we have found $\gamma^p$ with the desired properties.  This shows that if $i=j$, the second condition in the lemma doesn't hold.
\end{proof}

\begin{proof}[Proof of Proposition~\ref{prop:monomial}]
    Let $\alpha \in \Phi^{(m)}_{\geq -1}(W_i)$ and $\beta \in \Phi^{(m)}_{\geq -1}(W_j)$ with  $1\leq i,j \leq k$, and $\FF\in \aut(\Gamma^{(m)})$.   Because the second condition in the previous lemma is invariant under $\FF$, $\alpha$ and $\beta$ are in the same block of the vertex set partition if and only if $\FF(\alpha)$ and $\FF(\beta)$ have the same property.  So, $\FF$ stabilizes the vertex set partition in~\eqref{eq:vert_uplus}, which means that it is a monomial automorphism. 
\end{proof}

Let us describe one particular application of the previous proposition. 

\begin{rema} \label{rema:linkfactors}
    Assume now that $W$ is irreducible, let $\alpha \in \Delta$, and let $W' = W_{\alpha}$ be the maximal standard parabolic subgroup of $W$ obtained by removing $\alpha$ in the Coxeter graph of $W$.  Let $\FF \in \aut(\Gamma^{(m)}(W))$.  If $\FF(-\alpha^1) = -\alpha^1$, the restriction of $\FF$ to the link of $-\alpha^1$ is also an automorphism that we denote $\FF' \in \aut( \Gamma^{(m)}(W') )$.  Because every automorphism of $\Gamma^{(m)}(W')$ is monomial, there is a type-preserving permutation of the irreducible factors of $W'$ underlying $\FF$.  This will be helpful in understanding the stabilizer of $-\alpha^1$ in $\aut(\Gamma^{(m)}$).
\end{rema}

%%%%%%%%%%%%%%%%%%%%%%%%%%%%%%
\section{Diagram automorphism}
%%%%%%%%%%%%%%%%%%%%%%%%%%%%%%

\label{sec:diagram}

Let $\DD : \Delta \to \Delta$ be an automorphism of the Coxeter graph of $W$.  There is an induced automorphism $W\to W$, and an induced self-bijection $\Phi \to \Phi$.   We keep the notation $\DD$ for these induced maps. 

%%%%%%%%%%%%%%%%%%%%%%%%%%%%%%%%%%%%%%%
\subsection{Even diagram automorphisms}

First, we assume that $\Delta_\bullet $ and $\Delta_\circ$ are preserved by $\DD$. A non-trivial such automorphism exists in types $A_n$  with $n$ odd, $E_6$, $D_n$ with $n\geq 4$ (and it is unique except for type $D_4$ where the group of even diagram automorphism is the symmetric group $\mathfrak{S}_3$).

We extend $\DD$ to $\Phi^{(m)}_{\geq -1}$ by $\DD(\alpha^i) = \DD(\alpha)^i$.  It is straighforward to check that the map $\DD$ preserves the compatibility relation, so that $\DD \in \aut(\Gamma^{(m)})$.  It is called an {\it even diagram automorphism}.  Moreover, we have $\DD\RR = \RR\DD$.

%%%%%%%%%%%%%%%%%%%%%%%%%%%%%%%%%%%%%%
\subsection{Odd diagram automorphisms}

Now, assume that $\Delta_\bullet $ and $\Delta_\circ$ are exchanged by $\DD$.  A non-trivial such automorphism exists in types $A_n$ with $n$ even, $F_4$ and $I_2(k)$ (and it is unique).

We extend $\DD$ to $\Phi^{(m)}_{\geq -1}$ by 
\[  
    \DD(\alpha^i) =
    \begin{cases}
        \DD(\alpha)^i & \text{ if } \alpha \in -\Delta, \\
        \DD(\alpha)^{m+1-i} & \text{ if } \alpha \in \Phi_+.
    \end{cases}
\]
It is straightforward to check that this gives an automorphism $\DD \in \aut(\Gamma^{(m)})$, which is called an {\it odd diagram automorphism}.  Moreover, we have $\DD \RR = \RR^{-1} \DD$.

%%%%%%%%%%%%%%%%%%%%%%%%%%%%%%%%%%%%%%%%%%%%%%%
\subsection{The canonical diagram automorphism}

\label{sec:canonical}

Let us recall standard facts of Coxeter theory.  Conjugation by the longest element $w_0$ acts on simple reflections as a symmetry of the Coxeter diagram (see also Remark~\ref{rema:w0}).  It is the identity when all exponents are even, and it is the unique nontrivial symmetry of order 2 in other cases ($A_n$, $D_n$ with $n$ odd, $E_6$, $I_2(2k+1)$).    If $h$ is even, we have $c^{h/2} = w_0$.  More precisely, for $\alpha \in \Delta$ we have $-c^{h/2}(\alpha) \in \Delta$ and it is the image of $\alpha$ under the symmetry of the Coxeter diagram. 

%We can be more precise about the structure of orbits for the action of $c$ on $\Phi$.

\begin{defi}
    The {\it canonical diagram automorphism} $\CC \in \diag$ is the element of $\diag$ associated to the map $\Delta\to \Delta$, $\alpha \mapsto -w_0 (\alpha)$.
\end{defi}

\begin{lemm} \label{lemm:Rpow}
    If $h$ is even, we have $\RR^{(mh+2)/2} = \CC$.
\end{lemm}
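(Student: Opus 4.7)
The plan is to verify the equality $\RR^{(mh+2)/2} = \CC$ vertex by vertex, using $\RR$-equivariance to cut the check down to negative simple roots. Both sides commute with $\RR$: the power $\RR^{(mh+2)/2}$ trivially, and $\CC$ because I will argue that $\CC$ is an \emph{even} diagram automorphism whenever $h$ is even. Since every $\RR$-orbit contains a vertex $-\rho^1$ by Lemma~\ref{lemm:equirepartis}, it suffices to show $\RR^{(mh+2)/2}(-\rho^1) = \CC(-\rho^1)$ for every $\rho \in \Delta$.

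That $\CC$ is even when $h$ is even is a short case check. In the types where $-w_0$ is a nontrivial diagram automorphism (namely $A_n$ with $n$ odd, $D_n$ with $n$ odd, and $E_6$), inspection of the Dynkin diagram shows that the map $\rho \mapsto -w_0(\rho)$ preserves the bipartite 2-coloring $\Delta = \Delta_\bullet \uplus \Delta_\circ$; in all remaining types with $h$ even, one has $w_0 = -\mathrm{id}$ and $\CC$ is the identity. Either way, $\CC(-\rho^1) = -\CC(\rho)^1$ with $\CC(\rho) \in \Delta$ in the same bipartite part as $\rho$.

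Now fix $\rho \in \Delta$ and let $X$ be its $\RR$-orbit. If $|X| = (mh+2)/2$, then $\RR^{(mh+2)/2}$ fixes every element of $X$; the uniqueness of the $-\tau^1$-element in a small orbit (Lemma~\ref{lemm:equirepartis}) forces $-w_0(\rho) = \rho$, hence $\CC(\rho) = \rho$ and $\CC(-\rho^1) = -\rho^1$ as well. If instead $|X| = mh+2$, then $\RR^{(mh+2)/2}$ is a fixed-point-free involution on $X$, while Lemma~\ref{lemm:equirepartis} exhibits exactly two vertices of the form $-\tau^1$ in $X$, namely $-\rho^1$ and $-\sigma^1$ with $\sigma := -w_0(\rho) = \CC(\rho)$. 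Since $\CC$ is even, $\rho$ and $\sigma$ lie in the same bipartite part, so $-\rho^1$ and $-\sigma^1$ both inhabit the same block of the decomposition~\eqref{eq:phi_decomp}---either $-\Delta_\circ^1$ or $-\Delta_\bullet^1$. Because $\RR$ cyclically permutes the $mh+2$ blocks (Remark~\ref{rema:phi_decomp}) and $h$ is even, the $(mh+2)/2$-th iterate of this shift is trivial modulo $mh+2$, so $\RR^{(mh+2)/2}$ preserves each block setwise. Therefore $\RR^{(mh+2)/2}(-\rho^1)$ lies in $X$ and in the same block as $-\rho^1$, so it is one of $\{-\rho^1,\,-\sigma^1\}$; the absence of fixed points on $X$ excludes $-\rho^1$, leaving $\RR^{(mh+2)/2}(-\rho^1) = -\sigma^1 = \CC(-\rho^1)$.

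The main obstacle is the parity check for $\CC$: it is what guarantees that the two special vertices $-\rho^1$ and $-\sigma^1$ of a large orbit share a block, which is the geometric hook that pins down $\RR^{(mh+2)/2}(-\rho^1)$. Once that input is secured, the orbit analysis is routine, and the $\RR$-equivariance propagates the pointwise equality on $-\Delta^1$ to all of $\Phi^{(m)}_{\geq -1}$.
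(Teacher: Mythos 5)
Your strategy is genuinely different from the paper's: the paper simply computes $\RR^m$ explicitly (equation~\eqref{eq:Rm}) and iterates to show that $\RR^{mh/2+1}$ is the map $\alpha^i\mapsto -c^{h/2}(\alpha)^i$, whereas you reduce the check to the vertices $-\rho^1$ by exploiting that both sides commute with $\RR$, and then pin down the image of $-\rho^1$ by orbit size and block considerations. The equivariance reduction is valid, your verification that $\CC$ is even when $h$ is even is correct, and the large-orbit analysis is essentially sound. (One caveat there: you invoke Remark~\ref{rema:phi_decomp} and the parity of $h$ to conclude that $\RR^{(mh+2)/2}$ preserves each block of~\eqref{eq:phi_decomp}; the remark as printed says $\RR$ shifts the block index by $-m$, under which $h$ being even is irrelevant and the shift by $-m(mh+2)/2$ is nontrivial for $m$ odd. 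The true shift is by $-h$ --- each application of $\RR$ raises the color by one, moving $h$ blocks to the left --- and with that value your ``trivial since $h$ is even'' is correct. You should make the shift amount explicit rather than lean on the remark.)

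The genuine gap is in the small-orbit case. You claim that ``the uniqueness of the $-\tau^1$-element in a small orbit forces $-w_0(\rho)=\rho$.'' Lemma~\ref{lemm:equirepartis} does not deliver this: it asserts that a \emph{large} orbit contains exactly $-\rho^1$ and $w_0(\rho)^1$, which gives the implication ``large $\Rightarrow$ $w_0(\rho)\neq-\rho$,'' but you need the converse, ``small $\Rightarrow$ $w_0(\rho)=-\rho$.'' The lemma is silent about where $w_0(\rho)^1$ sits when the orbit of $-\rho^1$ is small, and the obvious attempts to close the loop fail: if $\sigma:=-w_0(\rho)\neq\rho$ and the orbit of $-\rho^1$ is small, one can only deduce that the orbit of $-\sigma^1$ is also small, which is not a contradiction on the basis of Lemma~\ref{lemm:equirepartis} or of the counting identity it implies. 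The statement you need is true, but establishing it amounts to knowing $\RR^{(mh+2)/2}(-\rho^1)=w_0(\rho)^1$ for every $\rho$ --- which is essentially the lemma being proved, and is exactly what the paper's direct computation supplies. To repair your proof you would either need to strengthen Lemma~\ref{lemm:equirepartis} to say that the orbit of $-\rho^1$ always contains $w_0(\rho)^1$ (true, and implicit in Fomin--Reading), or fall back on computing $\RR^{(mh+2)/2}(-\rho^1)$ directly from~\eqref{eq:Rm} in the small-orbit case.
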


\begin{proof}
    By the definition of $\RR$, we get 
    \begin{align} \label{eq:Rm}
        \RR^m(\alpha^i) 
        =
        \begin{cases}
            c(\alpha)^i & \text{ if } \alpha \in \Phi_+\backslash \Delta_\bullet, \\
            -c(\alpha)^{i-1} & \text{ if } \alpha \in \Delta_\circ \text{ and } i>1,\\
            (-\alpha)^1 & \text{ if } \alpha\in \Delta_\circ \text{ and } i=1,\\
            c(\alpha)^m & \text{ if } \alpha \in -\Delta_\circ \text{ and } i=1, \\
            (-\alpha)^m & \text{ if } \alpha \in -\Delta_\bullet \text{ and }i=1.            
        \end{cases}
    \end{align}
    From this, it is straightforward to compute $\RR^{mh/2}$ in the different cases, and show that $\RR^{mh/2+1}$ is the map $\alpha^i \mapsto -c^{h/2}(\alpha)^i$.
\end{proof}

\begin{prop} \label{prop:c_in_dih}
    We have $\CC \in \dih$.
\end{prop}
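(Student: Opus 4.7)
The plan is to split on the parity of the Coxeter number $h$. If $h$ is even, then Lemma~\ref{lemm:Rpow} already identifies $\CC = \RR^{(mh+2)/2}$, so $\CC \in \langle \RR\rangle \subset \dih$ immediately. Only the case of odd $h$ requires additional work.

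Suppose now $h$ is odd. The classification of finite irreducible Coxeter groups shows that $W$ must be of type $A_n$ with $n$ even or of type $I_2(k)$ with $k$ odd (all other irreducible types have even Coxeter number, and for $A_1$ one has $\CC = \mathrm{id}$). In both remaining cases the unique nontrivial diagram automorphism exchanges $\Delta_\bullet$ and $\Delta_\circ$, so $\CC$ is an odd diagram automorphism and therefore $\CC\RR\CC^{-1} = \RR^{-1}$. Since $\SS$ is a dihedral reflection satisfying $\SS\RR\SS = \RR^{-1}$ as well, the product $\CC\SS$ commutes with $\RR$. My first attempt would be to argue that any automorphism commuting with $\RR$ is determined by its value on a single vertex of a generating $\RR$-orbit and that, in the two types at hand (which admit no nontrivial even diagram automorphisms), this forces $\CC\SS$ to agree with an explicit power $\RR^j$, yielding $\CC = \RR^j\SS \in \dih$.

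Should the centraliser argument rely on results not yet available at this point in the paper, the fall-back would be direct verification using the combinatorial models: for $I_2(k)$ with $k$ odd, $\Gamma^{(m)}(I_2(k))$ is a one-dimensional flag complex on $mk+2$ vertices whose automorphism group is visibly dihedral, so $\CC$ can be identified with a specific reflection $\RR^j\SS$ by inspection; for $A_n$ with $n$ even, the polygon-dissection model of $\Gamma^{(m)}$ exhibits $\dih$ as the full dihedral symmetry group of the underlying regular polygon, and the diagram reversal $\CC$ is one of its reflections.

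The main obstacle is precisely this odd-$h$ step. The identity $c^{h/2}=w_0$ that drives Lemma~\ref{lemm:Rpow} is no longer available; moreover when $mh+2$ is odd, $\langle \RR\rangle$ has no nontrivial involutions, so $\CC$ must enter $\dih$ as a reflection $\RR^j\SS$ rather than as a rotation. Identifying the correct $j$ either requires a centraliser-uniqueness argument along the lines sketched above, or a type-by-type combinatorial check in $A_n$ and $I_2(k)$.
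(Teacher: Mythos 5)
Your proposal is correct and follows essentially the same route as the paper: Lemma~\ref{lemm:Rpow} disposes of even $h$, and the odd-$h$ case is settled by direct inspection of the combinatorial model (the paper likewise just invokes the polygon-dissection picture; your centraliser sketch is an optional variant that, as you yourself note, would be circular at this stage since it presupposes knowledge of the full automorphism group). One point in your favour: the paper's proof asserts that odd $h$ forces type $A_{2n}$, whereas you correctly observe that $I_2(k)$ with $k$ odd also has odd Coxeter number and needs its own (immediate) check, the rank-two complex being visibly dihedral.
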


\begin{proof}
    Via the previous lemma, it remains only to consider the case where $h$ is odd, {\it i.e.}, $W$ has type $A_{2n}$.  Using the combinatorial model of polygon dissections, the result is clear.
\end{proof}

Recall from the introduction that $\diag \subset \aut(\Gamma^{(m)})$ is defined as the subgroup of diagram automorphisms.  To complete this section, note that Lemma~\ref{lemm:stab_negface} below characterizes elements of $\diag$ as the automorphisms of $\Gamma^{(m)}$ that stabilize $-\Delta^1$ (setwise).

%%%%%%%%%%%%%%%%%%%%%%%%%%%%%%%%%%%%%%
\section{Involutive automorphisms}
%%%%%%%%%%%%%%%%%%%%%%%%%%%%%%%%%%%%%%

\label{sec:involutive}

We define the two involutive automorphisms $\SS$ and $\TT$, as outlined in the introduction.   

%It is shown here that $\Gamma^{(m)}$ has a dihedral symmetry, by adding new involutive automorphisms to the cyclic group generated by $\RR$.  

\begin{defi}
    The self-map $\SS$ on $\Phi^{(m)}_{\geq -1}$ is defined by:
    \begin{align} \label{eq:defS}
        \SS( \alpha^i )
        =
        \begin{cases}
            \alpha^i & \text{ if } \alpha \in -\Delta_\circ \text{ and } i=1 \text{ (i)}, \\
            (-\alpha)^m & \text{ if } \alpha \in -\Delta_\bullet \text{ and } i=1 \text{ (ii)}, \\
            (-\alpha)^1 & \text{ if } \alpha\in \Delta_\bullet \text{ and } i=m \text{ (ii')},\\
            \alpha^{m-i} & \text{ if } \alpha\in \Delta_\bullet \text{ and } 1\leq i \leq m-1 \text{ (iii)},\\
            c_\bullet(\alpha)^{m+1-i} & \text{ if} \alpha \in \Phi_+ \backslash \Delta_\bullet \text{ (iv)}.
        \end{cases}
    \end{align}
\end{defi}

To gain some insight about this definition, we let the reader check how $\SS$ acts on the vertex partition in~\eqref{eq:phi_decomp} as explained in Remark~\ref{rema:phi_decomp}.  Another helpful verification is the following: in the combinatorial model of polygon dissections (in type $A_n$), this map acts geometrically as a reflection of the polygon and its diagonals.

\begin{lemm}
    We have $\SS^2 = \II$.  
\end{lemm}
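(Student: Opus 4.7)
The plan is a routine case-by-case check that $\SS \circ \SS$ is the identity, working through the five clauses (i)--(iv) in the definition and verifying that the image of each vertex falls into a clause whose second application returns to the starting vertex. The structure of the definition already suggests how the clauses pair up: (i) is a fixed set, (ii) and (ii') swap each other, (iii) is an involution on its own domain, and (iv) pairs with itself.

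For (i)--(iii), the verification is purely formal: if $\alpha \in -\Delta_\circ$, then $\SS(\SS(\alpha^1)) = \SS(\alpha^1) = \alpha^1$; if $\alpha \in -\Delta_\bullet$, then $\SS(\alpha^1) = (-\alpha)^m$ falls into clause (ii') since $-\alpha \in \Delta_\bullet$, and (ii') sends it to $(-(-\alpha))^1 = \alpha^1$; the symmetric check takes (ii') back into (ii); and (iii) is manifestly an involution since $\alpha \in \Delta_\bullet$ and $1 \leq i \leq m-1$ is preserved, with $m-(m-i) = i$. I would verify each of these in a single line.

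The only substantive case is (iv), and it is here that the plan requires a small lemma. One has to show two things: first, that if $\alpha \in \Phi_+ \setminus \Delta_\bullet$, then $c_\bullet(\alpha) \in \Phi_+ \setminus \Delta_\bullet$ as well (so that the second application of $\SS$ again falls into clause (iv), rather than landing on a negative root or on a simple root in $\Delta_\bullet$); and second, that $c_\bullet$ is an involution, which is immediate since $\Delta_\bullet$ consists of pairwise orthogonal roots, so $c_\bullet = \prod_{\beta \in \Delta_\bullet} t_\beta$ is a product of commuting involutions. Granting these, $\SS(\SS(\alpha^i)) = \SS(c_\bullet(\alpha)^{m+1-i}) = c_\bullet^2(\alpha)^{m+1-(m+1-i)} = \alpha^i$, closing the case.

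The main (and only real) obstacle is the stability statement $c_\bullet(\Phi_+ \setminus \Delta_\bullet) \subseteq \Phi_+ \setminus \Delta_\bullet$. I would prove it using the standard fact that each simple reflection $t_\beta$ permutes $\Phi_+ \setminus \{\beta\}$, combined with the observation that for $\beta, \beta' \in \Delta_\bullet$ with $\beta \neq \beta'$, orthogonality gives $t_\beta(\beta') = \beta'$, so $t_\beta$ cannot carry an element of $\Phi_+ \setminus \Delta_\bullet$ onto another element of $\Delta_\bullet$. Hence each factor $t_\beta$ of $c_\bullet$ preserves $\Phi_+ \setminus \Delta_\bullet$, and so does their product. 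Once this lemma is in place, the verification of $\SS^2 = \II$ is complete.
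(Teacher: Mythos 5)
Your proof is correct and follows exactly the route the paper takes: a case check through the clauses of the definition together with the observation that $c_\bullet$ is an involution on $\Phi_+ \setminus \Delta_\bullet$; the paper simply states this in one line, while you supply the (correct) verification that $c_\bullet$ indeed stabilizes $\Phi_+ \setminus \Delta_\bullet$. No discrepancy.
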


\begin{proof}
    This is straightforward from the different cases in the definition and from $c_\bullet$ being an involution on $\Phi_+ \backslash \Delta_\bullet$.
\end{proof}

\begin{prop} \label{lemm:SSaut}
    The map $\SS$ induces an automorphism of $\Gamma^{(m)}$.  
\end{prop}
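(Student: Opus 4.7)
The plan is to invoke the uniqueness characterization of the compatibility relation: $\mathrel{\|}$ is the unique symmetric binary relation on $\Phi^{(m)}_{\geq -1}$ that is $\RR$-invariant and satisfies $-\alpha^1 \mathrel{\|} \beta^j \iff t_\alpha \notin \supp(t_\beta)$ for $\alpha \in \Delta$. Introduce the auxiliary symmetric relation $R$ defined by $x \mathrel{R} y \iff \SS(x) \mathrel{\|} \SS(y)$; then showing $R$ satisfies these two defining conditions forces $R = \mathrel{\|}$, i.e.\ $\SS$ preserves $\mathrel{\|}$, hence $\SS \in \aut(\Gamma^{(m)})$ since $\Gamma^{(m)}$ is the flag complex of $\mathrel{\|}$.

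The first step is a standalone intertwining lemma: $\SS \RR \SS = \RR^{-1}$ on $\Phi^{(m)}_{\geq -1}$. This captures the geometric picture in Remark~\ref{rema:phi_decomp} in which $\SS$ reverses the order of the blocks of the decomposition~\eqref{eq:phi_decomp} while $\RR$ shifts by $-m$ blocks. The verification is a case analysis cross-referencing the three cases in the definition of $\RR$ against the five cases in the definition of $\SS$, using $c = c_\bullet c_\circ$ together with the identities $c_\bullet(\alpha) = -\alpha$ for $\alpha \in \Delta_\bullet$ and $c_\circ(\alpha) = -\alpha$ for $\alpha \in \Delta_\circ$. Given the intertwining, $\RR$-invariance of $R$ is then formal: $\RR(x) \mathrel{R} \RR(y) \iff \SS\RR(x) \mathrel{\|} \SS\RR(y) \iff \RR^{-1}\SS(x) \mathrel{\|} \RR^{-1}\SS(y) \iff x \mathrel{R} y$.

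The second step verifies that $-\alpha^1 \mathrel{R} \beta^j \iff t_\alpha \notin \supp(t_\beta)$ for all $\alpha \in \Delta$ and $\beta^j \in \Phi^{(m)}_{\geq -1}$. This splits on whether $\alpha \in \Delta_\circ$ or $\alpha \in \Delta_\bullet$. When $\alpha \in \Delta_\circ$, one has $\SS(-\alpha^1) = -\alpha^1$ by case~(i), so the statement reduces to $-\alpha^1 \mathrel{\|} \SS(\beta^j) \iff t_\alpha \notin \supp(t_\beta)$; writing $\SS(\beta^j) = \gamma^k$ from the definition and applying the defining condition for $\mathrel{\|}$ on the left reduces this to $t_\alpha \in \supp(t_\gamma) \iff t_\alpha \in \supp(t_\beta)$, which holds because conjugation by $c_\bullet$ fixes each reflection $t_\alpha$ with $\alpha \in \Delta_\circ$. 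When $\alpha \in \Delta_\bullet$, one has $\SS(-\alpha^1) = \alpha^m$, a positive colored root, so the required equivalence $\alpha^m \mathrel{\|} \SS(\beta^j) \iff t_\alpha \notin \supp(t_\beta)$ must be analyzed directly using the explicit compatibility rules~\eqref{rule1}--\eqref{rule3}. I expect this last sub-case to be the main obstacle: each of the five $\SS$-cases for $\beta^j$ produces a different combinatorial translation between conditions involving $t_\alpha t_{\beta'} \leq c$ or $\langle \alpha | \beta' \rangle \geq 0$ on one side and the support condition $t_\alpha \notin \supp(t_\beta)$ on the other, and dispatching them uses the structural identities from step one together with Lemma~\ref{lemm:csq_compat}.
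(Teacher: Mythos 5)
Your overall strategy --- proving the vertex-level identity $\SS\RR\SS=\RR^{-1}$ first, then showing that the pulled-back relation $R$ satisfies the two defining conditions of $\mathrel{\|}$ and invoking uniqueness --- is legitimate and genuinely different from the paper's proof, which instead uses Tzanaki's criterion (Proposition~\ref{prop:tzanaki}) to show directly that $\SS$ maps facets to facets via an explicit manipulation of the canonical factorization $c=w_\bullet w_1\cdots w_m w_\circ$. Step one is fine: the intertwining relation is proved in the paper by a direct case check that does not presuppose $\SS\in\aut(\Gamma^{(m)})$, so there is no circularity, and the formal deduction of $\RR$-invariance of $R$ is correct.

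However, step two has a genuine gap. In the sub-case $\alpha\in\Delta_\circ$, your stated justification is false: conjugation by $c_\bullet$ does \emph{not} fix $t_\alpha$ for $\alpha\in\Delta_\circ$ (already in $A_2$, $c_\bullet t_{\alpha_2}c_\bullet=t_{\alpha_1+\alpha_2}\neq t_{\alpha_2}$). The conclusion is nonetheless salvageable: since $c_\bullet$ only alters the $\Delta_\bullet$-coordinates of a root, the coefficient of $\alpha\in\Delta_\circ$ in $c_\bullet(\beta)$ equals that in $\beta$, and $t_\alpha\in\supp(t_\gamma)$ iff the coefficient of $\alpha$ in $\gamma$ is nonzero. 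The more serious problem is the sub-case $\alpha\in\Delta_\bullet$, where you must decide when $\alpha^m\mathrel{\|}\SS(\beta^j)$ for an \emph{arbitrary} almost-positive colored root $\beta^j$. The tools you cite cannot do this: rules~\eqref{rule1}--\eqref{rule3} only govern pairs of (negatives of) \emph{simple} roots, while $\SS(\beta^j)$ has root part $c_\bullet(\beta)$, a general positive root; and Lemma~\ref{lemm:csq_compat} gives only a \emph{necessary} condition for compatibility, so it can never establish the forward implication you need. A correct completion within your framework exists but requires an idea you did not supply: observe that $\alpha^m=\RR^m(-\alpha^1)$ for $\alpha\in\Delta_\bullet$, so by $\RR$-invariance and the intertwining, $\alpha^m\mathrel{\|}\SS(\beta^j)\iff -\alpha^1\mathrel{\|}\SS\RR^m(\beta^j)$, which reduces the question back to the known initial condition for $\mathrel{\|}$ applied to the root part of $\SS\RR^m(\beta^j)$; one then checks (using the explicit form~\eqref{eq:Rm} and the same coefficient argument, now with $c_\circ$) that this root part has the same $\Delta_\bullet$-coefficient pattern as $\beta$, with a handful of boundary cases. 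As written, the heart of the proof is missing.
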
%, and $\SS$, $\RR$ generate the dihedral group $I_2(mh+2)$.

\begin{proof}
We show that the map $\SS$ sends a facet of $\Gamma^{(m)}$ to another facet, using the description in terms of reduced factorization of the Coxeter element (Section~\ref{sec:reforder}). 

Let $ f = (\alpha_i^{k_i})_{1\leq i \leq n }$ be a facet of $\Gamma^{(m)}$, indexed as in Proposition~\ref{prop:tzanaki}, so that $c = t_{\alpha_1} \cdots t_{\alpha_n}$. The {\it canonical factorization} of $c$ associated with a facet of $\Gamma^{(m)}$ is:
\begin{equation}
    c = w_\bullet w_1 \cdots w_m w_\circ,
\end{equation}
defined as a coarsening of $t_{\alpha_1} \cdots t_{\alpha_n}$ where $w_\bullet$ (respectively, $w_j$, $w_\circ$) contains the factors $t_{\alpha_i}$ such that $\alpha_i$ is in $-\Delta_\bullet$ (respectively, $\Phi_+^j$, $-\Delta_\circ$).  We further refine this by writing
\[
    w_j = w_{j,2} w_{j,1}
\]
where $ w_{j,1}$ contains all factors $t_{\alpha_i}$ with $\alpha_i \in \Delta_\bullet$.  

It is helpful to first assume that $f' := \SS(f)$ is also a facet of $\Gamma^{(m)}$, and compute what should be the associated canonical factorization.  Denote it:
\[
    c = w'_\bullet w'_1 \cdots w'_m w'_\circ,
\]
and again we write
\[
    w'_i = w'_{i,2} w'_{i,1}
\]
where $ w'_{i,1}$ contains all factors $t_\alpha$ with $\alpha \in \Delta_\bullet$.  Now, the definition of $\SS$ gives necessary relations between all the factors we just defined:
\begin{align*}
    w'_\circ    &= w_\circ \text{ (from \eqref{eq:defS}, case (i))},\\
    w'_{i,1}    &= w_{m-i,1} \; \text{where } 1\leq i \leq m-1 \text{ (from \eqref{eq:defS}, case (iii))},\\
    w'_\bullet  &= w_{m,1} \text{ (from \eqref{eq:defS}, case (ii))},\\
    w'_{m,1}    &= w_\bullet \text{ (from \eqref{eq:defS}, case (ii'))},\\
    w'_{i,2}    &= c_\bullet  w_{m+1-i,2}^{-1} c_\bullet \; \text{where } 1\leq i \leq m \text{ (from \eqref{eq:defS}, case (iv))}. 
\end{align*}
About the latter equality, let us explain why we need to take the inverse.  Let $\alpha_1^i,\dots,\alpha_k^i \in \Phi^{(m)}_{\geq -1}$ be the vertices of $f$ that contributes to the factors of $w_{i,2}$, ordered so that $w_{i,2} = t_{\alpha_1} \cdots t_{\alpha_k}$ ({\it i.e.}, decreasingly with respect to $\prec$).  Let $\alpha'_i = c_\bullet(\alpha_i)$, so that $t_{\alpha'_i} = c_\bullet t_{\alpha_i} c_\bullet$.  Note that $w_{i,2} \leq c$ implies $c_\bullet w_{i,2} c_\bullet \leq c_\bullet c c_\bullet = c^{-1} $, so that $c_\bullet w_{i,2}^{-1} c_\bullet \leq c$ (it is well-known and easy to see from the definition that the absolute order is invariant under conjugation).  The way to order the elements $t_{\alpha'_i}$ and get an element below $c$ in the absolute order is thus $c_\bullet w_{i,2}^{-1} c_\bullet = t_{\alpha'_k} \cdots t_{\alpha'_1}$.

Now, let us check that these are indeed the factors of a factorization of $c$:
\begin{align*}
     w'_\bullet w'_1 \cdots w'_m w'_\circ 
     &= w'_\bullet w'_{1,2} w'_{1,1} \cdots w'_{m,2} w'_{m,1} w'_\circ \\
     &= w_{m,1} \cdot c_\bullet w_{m,2}^{-1} c_\bullet \cdot w_{m-1,1} \cdot c_\bullet w_{m-1,2}^{-1} c_\bullet \cdot w_{m-2,1} \cdots  c_\bullet w_{1,2}^{-1} c_\bullet \cdot w_\bullet w_\circ.
\end{align*}
Because $c_\bullet$ commutes with $w_{i,1}$, this gives:
\begin{align*}
     w'_\bullet w'_1 \cdots w'_m w'_\circ 
     &= w_{m,1} \cdot c_\bullet \cdot w_{m,2}^{-1} w_{m-1,1} w_{m-1,2}^{-1} w_{m-2,1} \cdots w_{1,2}^{-1} c_\bullet \cdot  w_\bullet w_\circ \\
     &= c_\bullet (w_1 \cdots w_m)^{-1} \cdot c_\bullet w_\bullet w_\circ \\
     &= c_\bullet (w_\circ c^{-1} w_\bullet) \cdot c_\bullet w_\bullet w_\circ
     = c_\bullet w_\circ c_\circ c_\bullet w_\bullet c_\bullet w_\bullet w_\circ
     = c_\bullet w_\circ c_\circ w_\circ
     =c.
\end{align*}

%Si on a une face quelconque et pas une facette, c'est:
%\begin{align}
%    \prod \SS(f) = 
%    c_\bullet (w_\circ (\prod f)^{-1} w_\bullet) \cdot c_\bullet w_\bullet w_\circ
%    = c_\bullet w_\circ c_\circ \underline{f}^{-1} c_\bullet w_\bullet \cdot c_\bullet w_\bullet w_\circ
%    = c_\bullet w_\circ c_\circ \underline{f}^{-1} w_\circ
%\end{align}
%de sorte que
%\begin{align}
%    \underline{\SS(f)} = 
%    c_\circ w_\circ \underline{f}^{-1} w_\circ c_\circ.
%\end{align}

The previous computations prove the proposition.  Indeed, if $f = \{\alpha_1^{i_1},\dots, \alpha_n^{i_n}\}$ and $f' = \{ \SS(\alpha_1^{i_1}),\dots, \SS(\alpha_n^{i_n}) \}$, the factorization $c = w'_\bullet w'_1 \cdots w'_m w'_\circ$ can be refined as a reflection factorization that proves $f' \in \Gamma^{(m)}$ via Proposition~\ref{prop:tzanaki}.
%Alternatively, one can write explicitly the $\RR$-orbits in $\Phi^{(m)}_{\geq -1}$ as a collection of cycles and check what is the action of $\SS$ on these cycles. 

    An alternative proof would be check that $\SS$ preserves the compatibility relation $\|$. This is straightforward, although a bit long because of the various cases to consider. 
\end{proof}

\begin{lemm}
    We have $\SS\RR\SS = \RR^{-1}$.
\end{lemm}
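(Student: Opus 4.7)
The plan is to prove $\SS\RR\SS = \RR^{-1}$ by a direct case-by-case verification on $\Phi^{(m)}_{\geq -1}$. Using $\SS^2 = \II$ from the preceding lemma, the claim is equivalent to $\SS\RR = \RR^{-1}\SS$, which is more convenient since both sides are piecewise-defined maps on the decomposition~\eqref{eq:phi_decomp}. First I would tabulate $\RR^{-1}$ by inverting the three clauses of the definition of $\RR$, and then evaluate $\SS\RR(\alpha^i)$ and $\RR^{-1}\SS(\alpha^i)$ stratum by stratum.

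The verification splits along the five clauses of~\eqref{eq:defS}: (a) $\alpha\in-\Delta_\circ$, $i=1$; (b) $\alpha\in-\Delta_\bullet$, $i=1$; (c) $\alpha\in\Delta_\bullet$, $i=m$; (d) $\alpha\in\Delta_\bullet$, $1\leq i<m$; (e) $\alpha\in\Phi_+\setminus\Delta_\bullet$, further split by $i=m$ versus $1\leq i\leq m-1$ and, when needed, by which stratum $c_\bullet(\alpha)$ belongs to. For instance, in case (a) one has $\SS(\alpha^1)=\alpha^1$ and $\RR(\alpha^1)=c(\alpha)^1=c_\bullet(-\alpha)^1$, which lies in $\Phi_+\setminus\Delta_\bullet$, so clause (iv) of $\SS$ applies and gives $\SS(c(\alpha)^1) = c_\bullet(c_\bullet(-\alpha))^m = (-\alpha)^m$; on the other hand inverting clause~2 of $\RR$ yields $\RR^{-1}(\alpha^1) = (-\alpha)^m$, matching. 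The ingredients used throughout are $c_\bullet^2=c_\circ^2=\II$, $c=c_\bullet c_\circ$ (hence $c^{-1}=c_\circ c_\bullet$), $c_\bullet(\alpha)=-\alpha$ for $\alpha\in\Delta_\bullet$ (analogously for $c_\circ$), and the standard fact that $c_\bullet$ permutes $\Phi_+\setminus\Delta_\bullet$.

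The main obstacle is bookkeeping rather than depth: each individual check is a one-line computation, but one must carefully track the correct clause of every definition at each step, particularly in case (e) where $c_\bullet(\alpha)$ may land in $\Delta_\circ$ or in $\Phi_+\setminus(\Delta_\bullet\cup\Delta_\circ)$ according to the adjacency of $\alpha$ to $\Delta_\bullet$ in the Coxeter graph, and the boundary values $i=1$ and $i=m$ have to be separated from the generic range.

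A cleaner conceptual route, which I would use as a cross-check, proceeds via Remark~\ref{rema:phi_decomp}: $\RR$ shifts the $mh+2$ blocks of~\eqref{eq:phi_decomp} cyclically by $m$, and $\SS$ reverses the block-order while fixing the block $-\Delta_\circ^1$. On the block-set the relation $\SS\RR\SS=\RR^{-1}$ is then the standard presentation of the dihedral action, and it remains only to verify agreement inside a single block, where each map has a uniform description in terms of Steinberg's ordering from Section~\ref{sec:reforder}.
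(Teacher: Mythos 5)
Your proposal is correct and takes essentially the same approach as the paper: a direct case-by-case verification over the strata of the piecewise definitions of $\RR$ and $\SS$. The only organizational difference is that the paper computes $\SS\RR$ on each stratum and observes it is an involution (which, given $\SS^2=\II$, is equivalent to $\SS\RR\SS=\RR^{-1}$ and spares you from tabulating $\RR^{-1}$), whereas you compare $\SS\RR$ with $\RR^{-1}\SS$ directly; your sample computation in case (a) is accurate.
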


\begin{proof}
    We make the composition $\mathcal{SR}$ explicit. First,
    \[
        \mathcal{SR}(\alpha^i) = 
        \begin{cases}
            \SS(\alpha^{i+1}) = \alpha^{m-i-1}  & \text{ if } \alpha \in \Delta_\bullet \text{ and } 1 \leq i \leq m-2, \\
            \SS(\alpha^{i+1}) = c_\bullet(\alpha)^{m-i}  & \text{ if } \alpha \in \Phi_+ \backslash \Delta_\bullet \text{ and } 1 \leq i \leq m-1, 
        \end{cases}
    \]
    so $\mathcal{SR}$ is an involution on the elements considered in each case.  Second,
    \[
        \mathcal{SR}(\alpha^i) = 
        \begin{cases}
            \SS( (\alpha)^1) = (-\alpha)^1  & \text{ if } \alpha \in \Delta_\circ \text{ and } i = m, \\
            \SS( c(\alpha)^1 ) = \SS( c_\bullet( -\alpha)^1 ) = (-\alpha)^m  & \text{ if } \alpha \in - \Delta_\circ \text{ and } i = 1,
        \end{cases}
    \]
    and
    \[
        \mathcal{SR}(\alpha^i) = 
        \begin{cases}
            \SS( (-\alpha)^1) = (-\alpha)^{m-1}  & \text{ if } \alpha \in -\Delta_\bullet \text{ and } i = 1, \\
            \SS( \alpha^m ) = (-\alpha)^1  & \text{ if } \alpha \in \Delta_\bullet \text{ and } i = m-1,
        \end{cases}
    \]
    so $\SS \RR$ is also an involution on these elements.  Eventually, we have:
    \[
        \SS \RR (\alpha^i) = \SS( c(\alpha)^1) = c_\circ (\alpha)^m \quad \text{ if } \alpha \in \Phi_+ \backslash \Delta_\circ \text{ and } i=m. 
    \]
    (To check this, distinguish the cases $\alpha\in \Delta_\bullet$, $\alpha\in c_\circ(\Delta_\bullet)$, and $\alpha \in \Phi_+ \backslash( \Delta_\circ \cup \Delta_\bullet \cup c_\circ(\Delta_\bullet) )$).  It is now clear that $\mathcal{SR}$ is an involution.
\end{proof}

The previous lemma means that $\langle\RR,\SS\rangle \subset \aut(\Gamma^{(m)})$ is a dihedral subgroup.  Recall from the introduction that it is denoted $\dih$.

\begin{defi}
    The self-map $\TT$ on $\Phi^{(m)}_{\geq -1}$ is defined by:
    \begin{align}
        \TT( \alpha^i )
        =
        \begin{cases}
            \alpha^i & \text{ if } \alpha \in -\Delta_\bullet \text{ and } i=1 \text{ (i)}, \\
            -\alpha^{1} & \text{ if } \alpha\in \pm \Delta_\circ \text{ and } i=1 \text{ (ii)},\\
            \alpha^{m+2-i} & \text{ if } \alpha\in \Delta_\circ \text{ and } 2 \leq i \leq m \text{ (iii)},\\
            c_\circ(\alpha)^{m+1-i} & \text{ if } \alpha \in \Phi_+ \backslash \Delta_\circ  \text{ (iv)}. 
        \end{cases}
    \end{align}
\end{defi}

\begin{prop}
    We have $\TT \in \aut(\Gamma^{(m)})$, moreover $\TT =  \iota  \SS  \iota $ ({\it i.e.}, $\check{\SS} = \TT$ and $\check{\TT} = \SS$).  %It follows that $\TT$ induces an involutive automorphism of $\Gamma^{(m)}$.
\end{prop}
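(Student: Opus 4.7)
The plan is to deduce this by transporting the analogous involution from the twin complex $\check\Gamma^{(m)}$ (Section~\ref{sec:other}) back to $\Gamma^{(m)}$ via the isomorphism $\iota$, so that Proposition~\ref{lemm:SSaut} does the hard work and I avoid redoing the lengthy factorization-of-$c$ argument. Concretely, I first apply Proposition~\ref{lemm:SSaut} to $\check\Gamma^{(m)}$: the same construction and proof produce an involutive automorphism $\check\SS \in \aut(\check\Gamma^{(m)})$, given by the formula of $\SS$ with $\Delta_\bullet$ and $\Delta_\circ$ exchanged (and correspondingly $c_\bullet$ and $c_\circ$ exchanged). Because $\iota : \Gamma^{(m)} \to \check\Gamma^{(m)}$ is a simplicial isomorphism and a self-inverse bijection on vertices, the conjugate $\iota\,\check\SS\,\iota$ is an automorphism of $\Gamma^{(m)}$.

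Next, I verify the set-map identity $\TT = \iota\,\check\SS\,\iota$ on $\Phi^{(m)}_{\geq -1}$ (which I read as the intended content of ``$\TT = \iota\SS\iota$'' in the statement, the inner $\SS$ meaning the $\SS$-map of the twin complex). This is a direct case check through the four cases of $\TT$. For instance, if $\alpha \in -\Delta_\circ$ and $i=1$, the composition produces $\alpha^1 \to \alpha^1 \to (-\alpha)^m \to (-\alpha)^1$, matching case~(ii) of $\TT$. If $\alpha \in \Delta_\bullet$ and $i=m$, the composition produces $\alpha^m \to \alpha^1 \to c_\circ(\alpha)^m \to c_\circ(\alpha)^1$, matching case~(iv) of $\TT$. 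The remaining cases, covering $\alpha \in \Delta_\circ$ at various colors, $\alpha \in -\Delta_\bullet$ at color $1$, and $\alpha \in \Phi_+ \setminus (\Delta_\bullet \cup \Delta_\circ)$, unfold in the same way. Once this identity is established, $\TT \in \aut(\Gamma^{(m)})$ is immediate from the first step, and $\check\TT = \SS$ follows symmetrically.

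The main obstacle is the bookkeeping: the four-case formula for $\TT$ must be matched against the five-case formula for $\check\SS$ after accounting for the color reversal $i \mapsto m+1-i$ that $\iota$ applies on positive roots, and one has to identify which branch of $\check\SS$ gets selected once $\iota$ has relocated the color. Particular care is needed at the boundary colors $i=1$ and $i=m$ on $\Delta_\bullet$ and $\Delta_\circ$, where the $\iota$-shift swings inputs into case~(ii'), (iii), or (iv) of $\check\SS$ in a way that is not quite symmetric across the positive/negative root divide. Beyond this tracking, no new idea is required. (An alternative, more direct proof would verify that $\TT$ preserves the compatibility relation $\|$ by hand, mimicking the proof of Proposition~\ref{lemm:SSaut}; but the transport approach via $\iota$ is substantially cleaner.)
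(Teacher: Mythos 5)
Your proposal is correct and matches the paper's own (very brief) proof, which simply invokes black/white symmetry --- that is, exactly the transport of $\SS$ through the isomorphism $\iota:\Gamma^{(m)}\to\check{\Gamma}^{(m)}$ followed by a case check that $\iota\,\check{\SS}\,\iota$ agrees with the stated formula for $\TT$. Your two sample computations are accurate, and the remaining cases verify in the same way.
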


The proof is straightforward, by black/white symmetry.

\begin{rema}
    %The definition of $\SS$ doesn't respect the symmetry between $\bullet$ and $\circ$ 
    %    Let $\SS = \SS$, and denote by $\TT$ the map obtained by exchanging the role of $\bullet$ and $\circ$ in the previous definition.
    When $m=1$, these maps $\SS$ and $\TT$ are those considered by Fomin and Zelevinsky in~\cite{fominzelevinsky}.  One can check that $\SS \TT = \RR^m$ (the map $\RR^m$ is explicit in~\eqref{eq:Rm}).  The two maps $\SS$ and $\TT$ are thus generators of $\dih$ if $m$ is relatively prime with the order of $\RR$.  This happens in the following two situations:
    \begin{itemize}
        \item the order of $\RR$ is $mh+2$ and $m$ is relatively prime to $mh+2$ (i.e., $m$ is odd).
        \item the order of $\RR$ is $\frac{mh+2}{2}$ ($h$ is even in this case, so that $m\frac{h}{2}+1$ is relatively prime to $m$).
    \end{itemize}
    Because this is not exhaustive, it is not possible to build the dihedral group of symmetries with $\SS$ and $\TT$ as generators, although they look like a natural set of generators. 
\end{rema}

%%%%%%%%%%%%%%%%%%%%%%%%%%%%%%%%%%%%%%%%%%
\section{Stabilizer of a pair of vertices}
%%%%%%%%%%%%%%%%%%%%%%%%%%%%%%%%%%%%%%%%%%

\label{sec:stabpair}

Here and in the following sections, we take $\alpha,\beta \in \Delta$ such that $\beta$ is the unique neighbor of some $\alpha$ in the Coxeter graph of $W$ ($\alpha$ is a {\it leaf}).  By symmetry (more precisely, using the isomorphism between $\Gamma^{(m)}$ and $\check{\Gamma}^{(m)}$ from Section~\ref{sec:other}), we can assume $\alpha \in \Delta_\bullet$ and $\beta \in \Delta_\circ$ in Proposition~\ref{prop:stab2vert} and the other case follows.  Let $W_\beta$ be the maximal standard parabolic subgroup obtained by removing $\beta$ in the Coxeter graph of $W$, and similarly for $\alpha$.   

%In order to avoid unnecessary cases in type $H$, we assume 

\begin{prop} \label{prop:stab2vert}
    The (pointwise) stabilizer of $ \{-\alpha^1, -\beta^1 \}$ in $\aut(\Gamma^{(m)})$ is generated by diagram automorphisms.  
\end{prop}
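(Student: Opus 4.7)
The plan is to use the link isomorphism $\lk(\{-\alpha^1\}) \simeq \Gamma^{(m)}(W_\alpha)$ together with the global induction hypothesis, and then rule out the non-diagrammatic contributions via a second link restriction. First, let $\FF \in \aut(\Gamma^{(m)})$ fix $-\alpha^1$ and $-\beta^1$. Because $\FF$ fixes $-\alpha^1$, Remark~\ref{rema:linkfactors} shows that $\FF$ restricts to an automorphism $\FF' \in \aut(\Gamma^{(m)}(W_\alpha))$, still fixing $-\beta^1$. Since $W_\alpha$ has rank $n-1$, the induction hypothesis (Theorem~\ref{theo:main} for $W_\alpha$) yields a decomposition $\FF' = h \cdot d$, with $h \in \dih(W_\alpha)$ and $d$ a representative of its coset in $\diag(W_\alpha)/\langle \CC(W_\alpha)\rangle$.

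Next, I show that $h$ is forced to lie in a very small subgroup. Because $\FF'$ fixes $-\beta^1$ and $d$ sends $-\beta^1$ to $-d(\beta)^1$, the element $h$ must send $-d(\beta)^1$ back to $-\beta^1$; both endpoints lie in $-\Delta(W_\alpha)^1$. Since $\alpha \in \Delta_\bullet$ has been removed, $\beta \in \Delta_\circ(W_\alpha)$, so by case (i) of the definition of $\SS$, the involution $\SS(W_\alpha)$ fixes $-\beta^1$. By Lemma~\ref{lemm:equirepartis}, the $\RR(W_\alpha)$-orbit of $-\beta^1$ contains at most two vertices of the form $-\rho^1$ (the other, in the large-orbit case, being $-w_0(\beta)^1$); and by Lemma~\ref{lemm:Rpow}, the halfway rotation $\RR^{(mh+2)/2}$ coincides with $\CC(W_\alpha)$ when the Coxeter number of $W_\alpha$ is even. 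Combining these facts with the dihedral structure, $h$ must lie in the finite subgroup $\langle \CC(W_\alpha), \SS(W_\alpha)\rangle$ of order at most $4$.

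The remaining and hardest step is to exclude the $\SS(W_\alpha)$ factor from $h$, so that $h \in \langle \CC(W_\alpha)\rangle$ and $\FF' = h d$ is itself a diagram automorphism of $W_\alpha$. The plan is to use that $\FF$ also fixes $-\beta^1$ in the ambient complex, and thus restricts to $\lk(\{-\beta^1\}) \simeq \Gamma^{(m)}(W_\beta)$. The key observation is that $W_\beta$ is reducible: since $\alpha$ is a leaf whose unique neighbor is $\beta$, removing $\beta$ isolates $\alpha$, so $W_\beta = \langle t_\alpha \rangle \times W_{\{\alpha,\beta\}}$. By Proposition~\ref{prop:monomial}, the restriction of $\FF$ to $\Gamma^{(m)}(W_\beta)$ splits over the two factors; comparing the induced action on the common ``double link'' $\Gamma^{(m)}(W_{\{\alpha,\beta\}})$ with its description coming from the $\alpha$-side restriction (and applying the induction hypothesis to $W_{\{\alpha,\beta\}}$) forces $\SS(W_\alpha)$ not to appear. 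Finally, since $\alpha$ is a leaf with unique neighbor $\beta$, a diagram automorphism of $W_\alpha$ fixing $\beta$ extends canonically to a diagram automorphism of $W$ fixing both $\alpha$ and $\beta$, and one checks that this extension equals $\FF$. The main obstacle is this last matching step between the two link restrictions.
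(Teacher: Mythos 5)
Your opening move coincides with the paper's: restrict to $\lk(\{-\alpha^1\})\simeq\Gamma^{(m)}(W_\alpha)$ and use the rank induction to conclude that $\FF'$ is either a diagram automorphism $\DD'$ of $W_\alpha$ fixing $-\beta^1$, or $\SS(W_\alpha)\DD'$. (The paper gets this in one step from Proposition~\ref{prop:stabvertex2}; your second paragraph essentially re-derives that statement from Theorem~\ref{theo:main}, somewhat loosely but reparably, and the extension of $\DD'$ to a diagram automorphism of $W$ fixing $-\alpha^1$ and $-\beta^1$ is also as in the paper.) The problem is the step you yourself flag as the ``main obstacle'': ruling out the $\SS(W_\alpha)$ factor. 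This is the genuinely hard part of the proposition, and the mechanism you propose --- comparing the restriction of $\FF$ to $\lk(\{-\beta^1\})\simeq\Gamma^{(m)}(W_\beta)$ with the $\alpha$-side restriction on the common link $\Gamma^{(m)}(W_{\Delta\setminus\{\alpha,\beta\}})$ --- cannot produce the needed contradiction.

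Concretely: suppose $\GG:=\FF\DD^{-1}$ restricts to $\SS(W_\alpha)$ on the link of $-\alpha^1$. On the vertices $-\gamma^1$ this sends $-\gamma^1\mapsto\gamma^m$ for $\gamma\in\Delta_\bullet\setminus\{\alpha\}$ and fixes $-\gamma^1$ for $\gamma\in\Delta_\circ$. Restricted to any irreducible factor $W_{C_j}$ of $W_\beta$, that is exactly the behaviour of $\SS(W_{C_j})$, which is a perfectly legitimate automorphism of $\Gamma^{(m)}(W_{C_j})$; so the monomial decomposition of $\GG$ on the link of $-\beta^1$, together with the induction hypothesis applied factorwise, is entirely consistent with the $\SS(W_\alpha)$ case and yields no contradiction. (Already in type $A_3$ with $\alpha,\gamma$ the leaves, $\lk(\{-\beta^1\})\simeq\Gamma^{(m)}(A_1)\star\Gamma^{(m)}(A_1)$ consists of isolated vertices within each factor, so every type-preserving vertex permutation is an automorphism there and the two link restrictions match trivially.) The actual obstruction lives on roots whose support contains $\beta$, such as $t_\gamma(\beta)$ for $\gamma$ a neighbour of $\beta$; these vertices are invisible in both $\lk(\{-\alpha^1\})$ and $\lk(\{-\beta^1\})$. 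This is why the paper instead descends to the star parabolic $W_I$ generated by $\beta$ and its neighbours and proves a dedicated nonexistence result in rank $3$ (Lemma~\ref{lemm:rank3}), whose proof exhibits the vertex $(t_\gamma(\beta))^m$ and compares links of vertices via Theorem~\ref{theo:nonexcep}. Some combinatorial input of this kind is indispensable; without it your argument does not close.
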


\begin{lemm} \label{lemm:stab_negface}
    Assume that $W$ is irreducible, and its rank is at least $2$.  If $\FF \in \aut(\Gamma^{(m)})$ is such that $\FF(-\rho^1) = -\rho^1$ for all $\rho\in \Delta$, then $\FF = \II$. 
\end{lemm}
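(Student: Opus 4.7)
The plan is to induct on $n = \mathrm{rank}(W)$, invoking the global induction hypothesis (Sections~\ref{sec:stabpair}--\ref{sec:autogroup} hold in ranks $<n$). For the base case $n = 2$, $W = I_2(k)$ and $\Gamma^{(m)}(W)$ is a cycle on $mk + 2 \geq 6$ vertices; the two hypothesized fixed vertices $-\alpha^1$ and $-\beta^1$ (with $\Delta = \{\alpha, \beta\}$) are compatible by~\eqref{rule1} (since $t_\alpha \neq t_\beta$), hence adjacent in the cycle, and an automorphism of a cycle of length $\geq 4$ fixing two adjacent vertices is the identity.

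For the inductive step $n \geq 3$, my first move is to dispose of vertices of non-full support by using the links of negative simple roots sitting at leaves of the Coxeter graph. If $\rho$ is any leaf of the Coxeter graph of $W$, the parabolic $W_\rho$ is irreducible of rank $n-1 \geq 2$, and the restriction $\FF|_{\lk(-\rho^1)} \in \aut(\Gamma^{(m)}(W_\rho))$ fixes every negative simple root of $W_\rho$; by the induction hypothesis it is the identity. Since by~\eqref{rule1} a vertex $\alpha^i$ lies in $\lk(-\rho^1)$ precisely when $\rho \notin \supp(\alpha)$, and since a connected proper subtree of the Coxeter tree necessarily misses at least one leaf, I conclude that $\FF$ fixes every $\alpha^i$ with $\supp(\alpha) \neq S$ — in particular every simple colored root $\sigma^j$.

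What remains, and is the main obstacle, is handling full-support positive colored roots $\alpha^i$. The plan is to place each such vertex in the link of some already-fixed simple colored root $\sigma^j$ and then apply the induction hypothesis to that link. Existence of $\sigma^j$ with $\alpha^i \mathrel{\|} \sigma^j$ follows from Proposition~\ref{prop:tzanaki} together with Hurwitz transitivity on reduced reflection factorizations of $c$: one can modify any factorization involving $t_\alpha$ to introduce a simple reflection $t_\sigma$, thereby producing a facet containing both $\alpha^i$ and $\sigma^j$. Since every vertex of $\Gamma^{(m)}(W)$ belongs to the $\RR$-orbit of some negative simple root (Lemma~\ref{lemm:equirepartis}), one has $\lk(\sigma^j) \simeq \Gamma^{(m)}(W_\sigma)$ via the appropriate power of $\RR$; the vertices in this link corresponding to negative simple roots of $W_\sigma$ unwind, using the explicit formula for $\RR$, either to simple colored roots of $\Gamma^{(m)}(W)$ or to colored roots of the form $c_\bullet(\rho)^{?}$ of typically proper support — both types already fixed by the previous paragraph. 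Proposition~\ref{prop:monomial} takes care of the reducibility of $W_\sigma$ by restricting $\FF$ to each irreducible factor of rank $\geq 2$, where the induction hypothesis applies.

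The truly delicate step, where I expect to spend most of the care, is the small set of configurations in which a negative simple root of $W_\sigma$ pulls back under $\RR$ to a full-support vertex; a quick diagram inspection shows this occurs only in very restrictive star-shaped situations, essentially confined to $D_4$. I would resolve these cases either by cycling through different choices of $(\sigma, j)$ until one yields a controllable identification, or by a direct verification on $D_4$ using its triality.
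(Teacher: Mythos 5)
Your base case rests on a false premise: $\Gamma^{(m)}(I_2(k))$ is a cycle only when $m=1$. In general it has $mk+2$ vertices but $\frac{(m+1)(mk+2)}{2}$ edges (the Fu\ss--Catalan number of $I_2(k)$), so its $1$-skeleton is an $(m+1)$-regular graph, and ``an automorphism of a cycle fixing two adjacent vertices is the identity'' does not apply for $m\geq 2$. The rank-$2$ case genuinely needs the same mechanism as the general case, since almost all of its vertices are full-support positive colored roots.

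Your reduction of the non-full-support vertices via links at leaves is sound and close to the paper's first step (the paper takes the link at every $-\rho^1$ and invokes Proposition~\ref{prop:stabvertex} in lower rank to see that the restriction is the identity). But the treatment of full-support colored roots --- the step you yourself flag as delicate --- is where the proposal breaks down, and it is exactly the step the paper handles by a different device. Taking the link of a positive simple colored root $\sigma^j$ lands you in $\Gamma^{(m)}(W_\sigma)$, which is typically reducible; Proposition~\ref{prop:monomial} only reduces you to its irreducible factors, and for a rank-one factor the automorphism group of $m+1$ isolated vertices is the full symmetric group, so no induction hypothesis pins those vertices down --- yet they can be full-support roots of $W$ (take $W=A_3$ and $\sigma$ the central node). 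You also never verify that the vertices of $\lk(\sigma^j)$ playing the role of the negative simple roots of $W_\sigma$ under the $\RR$-identification are among the already-fixed ones (``typically proper support'' is not an argument), and your last paragraph defers the remaining configurations rather than resolving them. The paper avoids links at positive vertices altogether: the link argument yields the implication ``if the adjacent blocks $-\Delta_\circ^1$ and $-\Delta_\bullet^1$ of~\eqref{eq:phi_decomp} are fixed pointwise, then so are the next blocks $\Delta_\bullet^m$ and $\Delta_\circ^1$,'' and conjugating this implication by elements of $\langle\RR,\SS,\TT\rangle$, which act transitively on pairs of adjacent blocks, propagates fixedness around all $mh+2$ blocks, reaching every full-support vertex (and settling rank $2$ as well). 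You would need either to adopt that propagation or to supply the missing case analysis in full.
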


\begin{proof}
    Consider the restriction of $\FF$ to the link of a vertex $-\rho^1$, for $\rho \in \Delta$.  Using an induction hypothesis, we can apply Proposition~\ref{prop:stabvertex} to the maximal standard parabolic subgroup $W_\rho$, and find that the restriction of $\FF$ on $\Gamma^{(m)}(W_\rho)$ is the identity.  In particular, $\FF(\rho^i)$ for $\rho\in\Delta$ and $1\leq i \leq m$ (since $\rho^i$ is in the link of $-\tau^1$ if $\tau\in\Delta$ and $\tau\neq\rho$). 
    
    To establish an induction showing that $\FF=\II$, we consider the block decomposition in~\eqref{eq:phi_decomp}. 
    \begin{itemize}
        \item We have shown that if the two neighbor blocks $-\Delta_\circ^1$ and $-\Delta_\bullet^1$ contain only fixed points, this is also the case (in particular) for the next blocks $\Delta_\circ^1$ and $\Delta_\bullet^m$.  
        \item The automorphisms $\RR$, $\SS$, and $\TT$ can be used to send any pair of neighbor blocks in~\eqref{eq:phi_decomp} to another one. Therefore, the previous property holds for any pair of consecutive blocks, not just $-\Delta_\circ^1$ and $-\Delta_\bullet^1$.
    \end{itemize}
    We conclude that every element of $\Phi^{(m)}_{\geq -1}$ is fixed by $\FF$.
\end{proof}

In particular, the previous lemma settles the case of rank 2 in Proposition~\ref{prop:stab2vert}.  We thus assume that the rank of $W$ is at least $3$ in the rest of this section.

\begin{lemm} \label{lemm:rank3}
    Suppose that the rank of $W$ is $3$.  Write $\Delta_\bullet = \{ \alpha, \gamma\}$ and $\Delta_\circ = \{\beta\}$.  There exists no automorphism $\FF \in \aut(\Gamma^{(m)})$ such that $\FF(-\alpha^1) = -\alpha^1$ and $\FF(-\gamma^1) = \gamma^m$.
\end{lemm}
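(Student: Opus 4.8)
Suppose, for contradiction, that such an $\FF$ exists. The plan is to attach to every edge $e = \{x,y\}$ of $\Gamma^{(m)}$ the integer
\[
    \nu(e) := \#\big\{\, v \in \lk(e) \;:\; \lk(v) \text{ is reducible} \,\big\},
\]
where ``$\lk(v)$ is reducible'' means $\lk(v) \simeq \Gamma^{(m)}(W')$ for a reducible $W'$. Since $\FF$ maps $\lk(v)$ isomorphically onto $\lk(\FF v)$, and since by Theorem~\ref{theo:nonexcep} a reducible generalized cluster complex is never isomorphic to an irreducible one, the property ``$\lk(v)$ is reducible'' is preserved by $\FF$; hence $\nu$ is invariant under $\aut(\Gamma^{(m)})$. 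Because $\FF(-\alpha^1) = -\alpha^1$ and $\FF(-\gamma^1) = \gamma^m$, the map $\FF$ sends the edge $\{-\alpha^1, -\gamma^1\}$ to the edge $\{-\alpha^1, \gamma^m\}$, so it suffices to show that $\nu(\{-\alpha^1, -\gamma^1\}) \neq \nu(\{-\alpha^1, \gamma^m\})$.

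First I would identify the reducible-link vertices. Among the maximal parabolics $W_\rho$ ($\rho \in \Delta$), only $W_\beta = \langle \alpha, \gamma\rangle$ is reducible, since $\beta$ is the unique interior node of the path $\alpha - \beta - \gamma$; thus $\lk(-\beta^1) \simeq \Gamma^{(m)}(A_1 \times A_1)$ is the only reducible link among the vertices $-\rho^1$. As $\RR$ carries $\lk(v)$ isomorphically to $\lk(\RR v)$, and as Lemma~\ref{lemm:equirepartis} guarantees that every $\RR$-orbit contains a vertex of the form $-\rho^1$, it follows that $\lk(v)$ is reducible if and only if $v$ lies in the $\RR$-orbit $O$ of $-\beta^1$. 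I will use $O$ only to evaluate $\nu$ on the two edges at hand.

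Next I compute the two links. Using \eqref{rule1} one checks that $\lk(\{-\alpha^1, -\gamma^1\})$ consists exactly of the $\beta$-colored vertices $-\beta^1, \beta^1, \dots, \beta^m$, and the defining cases of $\RR$ give the chain $\beta^1 \mapsto \beta^2 \mapsto \dots \mapsto \beta^m \mapsto -\beta^1$, so all $m+1$ of these vertices lie in $O$; hence $\nu(\{-\alpha^1, -\gamma^1\}) = m+1$. For the other edge, \eqref{rule1} forces every vertex of $\lk(\{-\alpha^1, \gamma^m\})$ to be either a negative root $-\rho^1$ or a positive root supported on $\{\beta, \gamma\}$; then \eqref{rule2} and \eqref{rule3} (together with Lemma~\ref{lemm:csq_compat}) exclude $\rho = \gamma$ and rule out all the vertices $\beta^j$ and $\gamma^j$, leaving exactly $-\beta^1$ together with $m$ positive roots of full support $\{\beta, \gamma\}$. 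Thus $\nu(\{-\alpha^1, \gamma^m\}) = 1 + \#\{\text{full-support neighbors lying in } O\}$.

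The main obstacle, and the only genuine computation, is to show that a positive root $\delta$ with $\supp(t_\delta) = \{t_\beta, t_\gamma\}$ never lies in $O$; equivalently, that $\lk(\delta^j)$ is irreducible. I would deduce this from the explicit formula \eqref{eq:Rm}: iterating $\RR^m$ on such a vertex $\delta^j$ reaches a negative vertex $-\rho^1$ with $\rho \in \{\alpha, \gamma\}$ rather than $\rho = \beta$, so by Lemma~\ref{lemm:equirepartis} its orbit is distinct from $O$. This can be verified either uniformly or, if convenient, case by case over the three rank-$3$ possibilities $A_3$, $B_3$, $H_3$. Granting it, we obtain $\nu(\{-\alpha^1, \gamma^m\}) = 1$, while $\nu(\{-\alpha^1, -\gamma^1\}) = m+1 \geq 2$, a contradiction. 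I expect the determination of the orbit of the full-support roots to be the delicate step; the remainder is a direct application of the compatibility rules~\eqref{rule1}--\eqref{rule3} and of Lemma~\ref{lemm:equirepartis}.
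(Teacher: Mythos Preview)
Your approach is essentially the paper's: both distinguish the edges $f=\{-\alpha^1,-\gamma^1\}$ and $f'=\{-\alpha^1,\gamma^m\}$ by the automorphism-invariant property ``which vertices of $\lk(e)$ have reducible link'', using that a vertex has reducible link iff it lies in the $\RR$-orbit $O$ of $-\beta^1$. The computation $\lk(f)\subset O$ is identical in both.

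The difference is in how the other edge is handled. You aim for the exact value $\nu(f')=1$, which forces you to show that \emph{every} full-support vertex in $\lk(f')$ lies outside $O$; you flag this as the delicate step and propose a case check over $A_3$, $B_3$, $H_3$. The paper is more economical on two counts. First, it observes that the statement is symmetric in $\alpha$ and $\gamma$ (via $\SS$), so one may assume the edge $\beta$--$\gamma$ has label $3$, i.e.\ $t_\gamma t_\beta t_\gamma=t_\beta t_\gamma t_\beta$. Second, it does not compute $\nu(f')$ exactly: it only exhibits \emph{one} vertex $\rho^m\in\lk(f')$ with irreducible link, namely $\rho=t_\gamma(\beta)$, and shows directly that $\TT(\rho^m)=\gamma^1=\RR(-\gamma^1)$, hence $\lk(\rho^m)\simeq\Gamma^{(m)}(W_\gamma)$. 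This replaces your orbit computation (iterating $\RR^m$) by a single application of $\TT$, and avoids any type-by-type analysis. Note that for the contradiction you also only need $\nu(f')<m+1$, not $\nu(f')=1$; so the paper's weaker claim would already suffice in your framework.
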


\begin{proof}
    By the finite-type classification, we have $t_\alpha t_\beta t_\alpha = t_\beta t_\alpha t_\beta $ or $t_\gamma t_\beta t_\gamma = t_\beta t_\gamma t_\beta $.  The statement is symmetric in $\alpha$ and $\gamma$, because $\FF(-\alpha^1) = -\alpha^1$ and $\FF(-\gamma^1) = \gamma^m$ is equivalent to $\SS\FF(-\alpha^1) = \alpha^m$ and $\SS\FF(-\gamma^1) = -\gamma^1$.  So, we assume $t_\gamma t_\beta t_\gamma = t_\beta t_\gamma t_\beta $.
    
    Consider the 1-dimensional face $f = \{-\alpha^1, -\gamma^1 \}$.  We have:
    \begin{equation} \label{eq:propf}
        \forall \rho^\ell \text{ vertex of } \lk(f),\; \lk(\{ \rho^\ell \}) \simeq \Gamma^{(m)}(A_1\times A_1). 
    \end{equation}
    Indeed, via~\eqref{rule1}, $\lk(f)$ is the $0$-dimensional complex with vertices $\{-\beta^1, \beta^1 , \dots , \beta^m \}$.  These vertices are in the $\RR$-orbit of $-\beta^1$.  So their links are all isomorphic to the link of $-\beta^1$, which is $\Gamma^{(m)}(W_\beta) \simeq \Gamma^{(m)}(A_1\times A_1$).

    Now, let $f' = \{ -\alpha^1, \gamma^m \}$.  Since the property in~\eqref{eq:propf} is invariant under automorphisms, it remains only to show that this property doesn't hold with $f'$ in place of $f$ to conclude that no automorphism sends $f$ to $f'$.  To do this, let $\rho = t_\gamma(\beta)$.  We check:
    \begin{itemize}
        \item $f' \cup\{ \rho^m \} \in \Gamma^{(m)}$.  The relation $-\alpha^1 \mathrel{\|} \rho^m$ is clear via~\eqref{rule1}.  The relation $\gamma^m \mathrel{\|} \rho^m$ holds via~\eqref{rule3} because: i) $t_\rho = t_\gamma t_\beta t_\gamma$ so that $t_\rho t_\gamma = t_\gamma t_\beta \leq c $,
        and ii) $\langle \rho | \gamma \rangle = \langle t_\gamma(\beta) | \gamma \rangle = -  \langle \beta | \gamma \rangle >0 $.
        \item $\lk(\{\rho^m\} ) \not \simeq \Gamma^{(m)}( A_1\times A_1 )$.  We have $t_\rho = t_\gamma t_\beta t_\gamma = t_\beta t_\gamma t_\beta$ (by the assumption at the beginning of this proof).  Since $c_\circ = t_\beta$, it follows that $c_\circ(\rho) = \gamma$, and $\TT( \rho^m ) = \gamma^1 = \RR( -\gamma^1) $.  So, $\{ \rho^m \}$ and $\{ -\gamma^1 \}$ have isomorphic links.  Since $W$ is irreducible, $W_\gamma \not \simeq A_1 \times A_1$, and $\lk(\{\rho^m\} ) \simeq \Gamma^{(m)}( W_{\gamma} ) \not \simeq \Gamma^{(m)}(A_1\times A_1) $ by Theorem~\ref{theo:nonexcep}.
    \end{itemize}
    So,~\eqref{eq:propf} doesn't hold with $f'$ in place of $f$. 
\end{proof}

\begin{proof}[Proof of Proposition~\ref{prop:stab2vert}]
    Let $\FF \in \aut(\Gamma^{(m)})$ with $\FF(-\alpha^1) = -\alpha^1 $ and $\FF(-\beta^1) = -\beta^1$.   Since $\FF(-\alpha^1) = -\alpha^1$, the restriction of $\FF$ to the link of $-\alpha^1$ gives an element $\FF' \in \aut( \Gamma^{(m)}(W_\alpha) )$.  Since $\{-\alpha^1, -\beta^1\} \in \Gamma^{(m)}$, $-\beta^1$ is in the link of $-\alpha^1$ and naturally identifies to a vertex $-\beta^1 \in \Gamma^{(m)}(W_\alpha)$ which is fixed by $\FF'$.  Using an induction hypothesis, we can apply Proposition~\ref{prop:stabvertex2}.  
    
    Proposition~\ref{prop:stabvertex2} gives $\FF' = \SS' \DD'$ or $\FF' = \DD'$, where $\DD'$ is a diagram automorphism of $\Gamma^{(m)}(W_\alpha)$ fixing $-\beta^1$ (and $\mathcal{S}'$ is the restriction of $\mathcal{S}$ in $\aut( \Gamma^{(m)}(W_\alpha) )$).  Note that $\DD'$ is the restriction of a diagram automorphism $\DD$ of $\Gamma^{(m)}(W)$ fixing $-\alpha^1$ and $-\beta^1$ (this is easily checked at the level of Coxeter graphs).  Our goal is to show that $\FF = \DD$.  Let $\GG := \FF \DD^{-1}$.
    
    By construction $\GG$ fixes $-\alpha^1$ and $-\beta^1$, moreover its restriction on $\Gamma^{(m)}(W_\alpha)$, denoted $\GG'$, is $\SS'$ or the identity.  By way of contradiction, assume that $\GG' = \SS'$.  Let $W_I$ be the standard parabolic subgroup of $W$ with simple roots $\beta$ and its neighbors in the Coxeter graph.  If $\gamma \in \Delta$ is at distance 2 from $\beta$, the vertex $-\gamma^1$ is fixed by $\SS'$ (because $\gamma\in \Delta_\circ$, just like $\beta$), and consequently it is also fixed by $\GG$.  It follows that the restriction of $\GG$ gives an automorphism $\HH \in \aut( \Gamma^{(m)}(W_I) )$.  The situation can be summarized as follows.
    \begin{itemize}
        \item By definition of $W_I$, its Coxeter graph is a star centered at $\beta$.  By the finite-type classification, either it has rank 3, or it is of type $D_4$. 
        \item The vertices $-\alpha^1$ and $-\beta^1$ are fixed by $\HH \in \aut(\Gamma^{(m)}(W_I))$.  The other vertices $\gamma$ and possibly $\delta$ (if $W_I$ has type $D_4$) are such that $\HH(-\gamma^1) = \gamma^m$  and $\HH(-\delta^1) = \delta^m$.
    \end{itemize}
    If the rank of $W_I$ is $3$, Lemma~\ref{lemm:rank3} gives a contradiction, so that such $\HH$ doesn't exist.  In the case where $W_I$ has type $D_4$, consider the composition $\SS \HH$: $-\beta^1$, $-\gamma^1$, $-\delta^1$ are fixed, and the image of $-\alpha^1$ is $\alpha^m$.  By considering the link of $-\delta^1$, we see that the nonexistence of such $\HH$ in type $D_4$ follows from the nonexistence in type $A_3$. 

    At this point, we have proved that $\GG'$ is the identity.  This proves that $\GG$ fixes all the vertices $-\alpha^1$ for $\alpha\in \Delta$.  By Lemma~\ref{lemm:stab_negface}, it follows that $\GG=\II$, so $\FF=\DD$. (Note that the assumption on the rank in Lemma~\ref{lemm:stab_negface} holds, because we assumed that the rank of $W$ is at least 3.)
\end{proof}

%%%%%%%%%%%%%%%%%%%%%%%%%%%%%%%%
\section{Stabilizer of vertices}
%%%%%%%%%%%%%%%%%%%%%%%%%%%%%%%%

\label{sec:stabvert}

Recall that $\alpha \in \Delta_\bullet$ and $\beta \in \Delta_\circ$ are such that $\beta$ is the unique neighbor of $\alpha$ in the Coxeter graph of $W$.  The goal of this section is to describe the stabilizer of the vertex $-\beta^1$  in $\aut(\Gamma^{(m)})$.  

\begin{prop} \label{prop:stabvertex}
%    Let $\FF \in \aut(\Gamma^{(m)})$ be such that $\FF(-\alpha) = -\alpha$ for some $\alpha \in \Delta_\circ$ (respectively, $\alpha \in \Delta_\bullet$).  Assume that $\alpha$ is the neighbor of at least one leaf in the Coxeter diagram of $W$. 
    Let $\FF \in \aut(\Gamma^{(m)})$ be such that $\FF(-\beta^1) = -\beta^1$.  Then we have either $\FF = \DD$ or $\FF = \SS \DD$, where $\DD$ is an even diagram automorphism such that $\DD(-\beta^1) = -\beta^1$.
\end{prop}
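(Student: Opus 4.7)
My strategy is to reduce to Proposition~\ref{prop:stab2vert}: I will show that after composing $\FF$ with $\SS$ and with an even diagram automorphism, the resulting map fixes both $-\alpha^1$ and $-\beta^1$, whereupon Proposition~\ref{prop:stab2vert} identifies it as an even diagram automorphism. First I restrict $\FF$ to $\lk(\{-\beta^1\}) \simeq \Gamma^{(m)}(W_\beta)$, getting $\FF' \in \aut(\Gamma^{(m)}(W_\beta))$. Since $\alpha$ is a leaf of $W$ with unique neighbor $\beta$, removing $\beta$ isolates $\alpha$, so $W_\beta$ contains an $A_1$ irreducible factor generated by $t_\alpha$; its vertex set in $\Gamma^{(m)}(W_\beta)$ is $V_\alpha := \{-\alpha^1, \alpha^1, \ldots, \alpha^m\}$. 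By Proposition~\ref{prop:monomial}, $\FF'$ is monomial, so $\FF(V_\alpha) = V_{\alpha'}$ for some leaf $\alpha' \in \Delta_\bullet$ of $W$ with unique neighbor $\beta$. Inspecting the finite-type classification, any two such leaves are related by an even diagram automorphism of $W$ fixing $\beta$ --- the only configuration where multiple leaves share a common neighbor is type $D_n$ for $n \geq 4$, where the even diagram group acts transitively on them while fixing the branching node. Pick such a $\DD_0 \in \diag$ with $\DD_0(\alpha) = \alpha'$, and replace $\FF$ by $\DD_0^{-1}\FF$, so that now $\FF(V_\alpha) = V_\alpha$.

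The main step is to prove $\FF(-\alpha^1) \in \{-\alpha^1, \alpha^m\}$. The motivating picture is the cyclic block decomposition~\eqref{eq:phi_decomp}: among the blocks meeting $V_\alpha$, precisely $-\Delta_\bullet^1$ (containing $-\alpha^1$) and $\Delta_\bullet^m$ (containing $\alpha^m$) are immediately adjacent, in the cyclic block order, to the block $-\Delta_\circ^1$ containing $-\beta^1$, whereas $\alpha^1,\ldots,\alpha^{m-1}$ occupy strictly more distant blocks $\Delta_\bullet^i$. To make this rigorous I would supply an intrinsic, automorphism-invariant characterization of the subset $\{-\alpha^1, \alpha^m\} \subset V_\alpha$ that refers only to $-\beta^1$ --- for instance, via an extremal compatibility pattern with the vertices in $\Phi^{(m)}_{\geq -1} \setminus \big(\lk(\{-\beta^1\}) \cup \{-\beta^1\}\big)$ (those outside the link of $-\beta^1$), or via the existence of specific facets joining $\{v, -\beta^1\}$ to the set $-\Delta^1$ of negative simple root vertices. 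Granting such a characterization, $\FF$ (which fixes $-\beta^1$) must preserve that subset of $V_\alpha$, so $\FF(-\alpha^1) \in \{-\alpha^1, \alpha^m\}$.

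Once this is in hand, the case $\FF(-\alpha^1) = -\alpha^1$ is immediate: Proposition~\ref{prop:stab2vert} applied to $\FF$, which now fixes $\{-\alpha^1, -\beta^1\}$, gives $\FF \in \diag$, and this diagram automorphism is necessarily even because it fixes $-\beta^1 \in -\Delta_\circ^1$ while an odd diagram automorphism would send $\beta \in \Delta_\circ$ into $\Delta_\bullet$. In the case $\FF(-\alpha^1) = \alpha^m$, the composition $\SS\FF$ fixes both $-\alpha^1$ and $-\beta^1$ (using $\SS(\alpha^m) = -\alpha^1$ from case (ii') of the definition of $\SS$, and $\SS(-\beta^1) = -\beta^1$ from case (i)); Proposition~\ref{prop:stab2vert} then gives $\SS\FF \in \diag$, again even by the same argument. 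Recombining with $\DD_0$, and using that $\DD_0$ commutes with $\SS$ (since $\DD_0$ preserves $\Delta_\bullet$, as is direct from the definition of $\SS$), produces the claimed form $\FF = \DD$ or $\FF = \SS\DD$ with $\DD \in \diag$ even and $\DD(-\beta^1) = -\beta^1$. The hard part is the intrinsic characterization in the main step: this is the combinatorial heart of the proof, and it has to avoid invoking the global theorem or the block-permutation property (which is itself part of what the main theorem establishes).
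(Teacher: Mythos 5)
Your overall strategy is the same as the paper's: use monomiality of the restriction to $\lk(\{-\beta^1\})\simeq\Gamma^{(m)}(W_\beta)$ to reduce to the case where $\FF$ stabilizes $V_\alpha=\{-\alpha^1,\alpha^1,\dots,\alpha^m\}$, then show $\FF$ sends $-\alpha^1$ into $\{-\alpha^1,\alpha^m\}$, and finish by applying Proposition~\ref{prop:stab2vert} to $\FF$ or to $\SS\FF$. The reduction steps and the endgame are correct (and your explicit remark that a diagram automorphism fixing $-\beta^1$ with $\beta\in\Delta_\circ$ must be even is a worthwhile addition, since an odd one would send $\beta$ into $\Delta_\bullet$).

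However, there is a genuine gap exactly where you flag it: you never produce the ``intrinsic, automorphism-invariant characterization'' of $\{-\alpha^1,\alpha^m\}$ inside $V_\alpha$, and the appeal to the cyclic block decomposition~\eqref{eq:phi_decomp} cannot substitute for it, since (as you note yourself) the compatibility of automorphisms with that decomposition is part of what is being proved. This missing step is the actual content of the paper's Lemma~\ref{lemm:squares}, and it is not a formality. The paper's characterization is: among all triples $\{-\beta^1,v,w\}$ with $v,w\in V_\alpha$ distinct, exactly one, namely $\{-\beta^1,-\alpha^1,\alpha^m\}$, cannot be completed to a $4$-cycle (``square'') in the compatibility graph. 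Completability in the other cases is shown by exhibiting explicit fourth vertices ($\beta^m$ for $\{-\beta^1,-\alpha^1,\alpha^i\}$ with $i<m$, and $t_\alpha(\beta)^1$ for $\{-\beta^1,\alpha^i,\alpha^j\}$), while non-completability for $\{-\beta^1,-\alpha^1,\alpha^m\}$ requires an argument combining the lattice property of the interval below $c$ in absolute order (to deduce $\langle\alpha|\gamma\rangle=0$ from $t_\alpha t_\gamma\leq c$ and $t_\gamma t_\alpha\leq c$) with positivity of root coefficients (to deduce $\langle\alpha|\gamma\rangle<0$). Without carrying out some such construction, the proof is incomplete at its central point. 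A secondary caution: your claim that any two leaves sharing the common neighbor $\beta$ are related by an even diagram automorphism (``only type $D_n$'') overlooks configurations such as $B_3$ or $H_3$, where the two leaves adjacent to the middle vertex are not exchanged by any diagram automorphism; if you rely on this reduction you should address why the monomial permutation cannot swap such factors, or argue around it.
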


%Since $\SS$ commutes with even diagram automorphism, it follows from the proposition that the stabilizer of $-\beta^1$ is the direct product of $\mathbb{Z}_2$ with the group of diagram automorphisms stabilizing $-\beta^1$.  

\begin{lemm} \label{lemm:squares}
    Let $\FF \in \aut(\Gamma^{(m)})$ be such that $\FF(-\beta^1) = -\beta^1$.  If $\FF$ stabilizes the vertex set $\{ -\alpha^1, \alpha^1, \dots, \alpha^m\}$,  then it also stabilizes the vertex set $\{-\alpha^1,\alpha^m\}$.
\end{lemm}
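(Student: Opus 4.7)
The plan is to identify an automorphism-invariant property that singles out $\{-\alpha^1, \alpha^m\}$ within $S := \{-\alpha^1, \alpha^1, \ldots, \alpha^m\}$. For each $v \in \Phi^{(m)}_{\geq -1}$ I set $A_v := \{u \in S \setminus \{v\} : u \mathrel{\|} v\}$; since $\FF$ preserves $S$ and $\mathrel{\|}$, one has $\FF(A_v) = A_{\FF(v)}$. Say $u \in S$ is \emph{extremal} if $A_v = \{u\}$ for some $v \notin S$. Extremality is $\FF$-invariant, and the lemma reduces to showing that the extremal elements are exactly $-\alpha^1$ and $\alpha^m$.

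For the witnesses, $v = \beta^1$ gives $A_{\beta^1} = \{-\alpha^1\}$ by direct application of the rules: \eqref{rule1} gives $-\alpha^1 \mathrel{\|} \beta^1$, while \eqref{rule2} fails at $i=1$ (since $\langle\alpha|\beta\rangle<0$) and \eqref{rule3} for $i \geq 2$ reduces to $t_\beta t_\alpha \leq c$, which fails (the key computation, justified below). Applying $\SS$ — which preserves $S$ and $\mathrel{\|}$, fixes $-\beta^1$, and swaps $-\alpha^1 \leftrightarrow \alpha^m$ — then transports this to $A_{\SS(\beta^1)} = \{\alpha^m\}$.

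The substantive step is to rule out extremality of $\alpha^i$ for $1 \leq i \leq m-1$. Suppose $v = \gamma^j \notin S$ has $\alpha^i \in A_v$. Then $v \centernot{\|} -\alpha^1$ forces $t_\alpha \in \supp(t_\gamma)$; since supports of positive roots are connected in the Coxeter graph and $\alpha$ is a leaf with unique neighbor $\beta$, either $\gamma = \alpha$ (making $v \in S$, a contradiction) or $\gamma$ involves both $\alpha$ and $\beta$. In the latter case I would use the factorization $c = t_\alpha c^\alpha$, where $c^\alpha$ is the bipartite Coxeter element of $W_\alpha$ (valid because the elements of $\Delta_\bullet$ are pairwise orthogonal): since $t_\gamma \notin W_\alpha$, the computation $(t_\alpha t_\gamma)^{-1} c = t_\gamma c^\alpha$ has $T$-length $n$, hence $t_\alpha t_\gamma \not\leq c$. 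By Lemma~\ref{lemm:csq_compat}, any $\alpha^k \mathrel{\|} \gamma^j$ with $\alpha^k \succ \gamma^j$ would force $t_\alpha t_\gamma \leq c$; since $\alpha^k$ lies in $\Delta_\bullet^k$ (the earliest block of color $k$), $\alpha^k \succ \gamma^j$ amounts to $k < j$. Combined with a sufficiency argument (reducing via $\RR$-invariance of $\mathrel{\|}$ to rule~\eqref{rule1}), this forces $A_v \cap \{\alpha^1, \ldots, \alpha^m\}$ to be upward-closed in the index, hence equal to $\{\alpha^j, \ldots, \alpha^m\}$ or $\{\alpha^{j+1}, \ldots, \alpha^m\}$. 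Any singleton here must be $\{\alpha^m\}$, never $\{\alpha^i\}$ for $i < m$. The remaining cases for $v$ — namely $v = -\delta^1$ with $\delta \neq \alpha$, $v = \gamma^j$ with $\gamma \in \Phi_+(W_{\alpha,\beta})$, or $\gamma \in \Phi_+(W_\alpha)$ involving $\beta$ but not $\alpha$ — give either $A_v = S$ or an initial segment containing $-\alpha^1$, neither of which is of the form $\{\alpha^i\}$.

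The main obstacle is the sufficiency direction for non-simple $\gamma$; the leaf assumption on $\alpha$ is essential throughout, both in supplying the factorization $c = t_\alpha c^\alpha$ and in forcing every non-simple positive root involving $\alpha$ to also involve $\beta$.
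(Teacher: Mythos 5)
Your approach is genuinely different from the paper's. The paper fixes $-\beta^1$ and singles out $\{-\alpha^1,\alpha^m\}$ as the unique pair $\{x,y\}$ in $S$ such that the triple $\{-\beta^1,x,y\}$ cannot be completed to a $4$-cycle of the compatibility graph; you instead characterize $-\alpha^1$ and $\alpha^m$ intrinsically as the elements of $S$ that occur as singleton traces $A_v$ for some $v\notin S$. Your invariant never uses the hypothesis $\FF(-\beta^1)=-\beta^1$, so if completed it would prove a slightly stronger statement. The skeleton is sound: extremality is $\FF$-invariant, the witness $A_{\beta^1}=\{-\alpha^1\}$ works (though $t_\beta t_\alpha\not\leq c$ does not literally follow from the computation you point to, since $t_\beta\in W_\alpha$; you must first rewrite $t_\beta t_\alpha=t_\alpha t_{t_\alpha(\beta)}$ with $t_{t_\alpha(\beta)}\notin W_\alpha$, or use the lattice property of $[1,c]$ as the paper does), the transport by $\SS$ is correct, and so is the necessity direction $\alpha^k\centernot{\|}\gamma^j$ for $k<j$.

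The genuine gap is the ``sufficiency argument,'' which carries the entire burden of ruling out extremality of $\alpha^i$ for $i<m$ and is dispatched in one parenthesis. Unwinding your own reduction: for $j\leq k\leq m$ one has $\RR^{-k}(\alpha^k)=-\alpha^1$ and, generically, $\RR^{-k}(\gamma^j)=\bigl(c^{-1}(\gamma)\bigr)^{m-k+j}$, so that by \eqref{rule1} the condition $\alpha^k\mathrel{\|}\gamma^j$ is equivalent to $t_\alpha\notin\supp\bigl(t_{c^{-1}(\gamma)}\bigr)$ --- a condition \emph{independent of $k$}, but one that fails for many $\gamma$ with $t_\alpha\in\supp(t_\gamma)$: since $c^{-1}$ is injective it can hold for at most $2|\Phi_+(W_\alpha)|$ of the $|\Phi_+|-|\Phi_+(W_\alpha)|$ such roots, and in $H_3$ with $\alpha$ the leaf on the bond labelled $5$ (so $W_\alpha$ of type $A_2$) this reads $6<12$. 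So the unconditional claim implicit in ``upward-closed, hence equal to $\{\alpha^j,\dots,\alpha^m\}$ or $\{\alpha^{j+1},\dots,\alpha^m\}$'' is false; what is true, and what your argument actually needs, is the all-or-nothing statement that the conditions $\alpha^k\mathrel{\|}\gamma^j$ for $k\geq j$ are mutually equivalent, so that under your standing hypothesis $\alpha^i\in A_v$ the intersection is all of $\{\alpha^j,\dots,\alpha^m\}$ and a singleton forces $i=j=m$. Establishing this equivalence requires tracking $\RR^{-k}(\gamma^j)$ through the wrap-around at color $1$, including the degenerate case $c^{-1}(\gamma)\in-\Delta_\circ$, none of which appears in your write-up. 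The route is salvageable, but this computation is the actual content of the step and must be supplied.
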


\begin{proof}
    In this proof, a tuple of 4 elements of $\Phi^{(m)}_{\geq-1}$ is called a {\it square} if their induced subgraph in the compatibility graph (the $1$-skeleton of $\Gamma^{(m)}$) is a cycle of length 4.  The idea is to characterize the pair $\{-\alpha^1,\alpha^m\}$ among all pairs of elements in $\{ -\alpha^1, \alpha^1, \dots, \alpha^m\}$ via certain squares containing $-\beta^1$.  Note that $-\beta^1$ is compatible with elements in $\{ -\alpha^1, \alpha^1, \dots, \alpha^m\}$ (via~\eqref{rule1}), and $\{ -\alpha^1, \alpha^1, \dots, \alpha^m\}$ doesn't contain a compatible pair (via Lemma~\ref{lemm:csq_compat}).
    We check the following properties:
\begin{itemize}    
    \item $\{-\beta^1,-\alpha^1,\alpha^i\}$ can be completed to form a square if $i<m$. We take $\beta^m$ as the fourth vertex.  We have $ \beta^m \mathrel{\|} \alpha^i$ via~\eqref{rule3} ($t_\alpha t_\beta \leq c$ can be seen by taking a subword of $c=c_\bullet c_\circ$). 
    \item $\{-\beta^1,\alpha^i,\alpha^j\}$ can be completed to form a square if $1\leq i<j\leq m$.  Define $\gamma = t_\alpha(\beta)$, so that $ t_\gamma t_\alpha = t_\alpha t_\beta \leq c$. If $i>1$, this shows that we can take $\gamma^1$ as the fourth vertex of the square (via~\eqref{rule3}).  If $i=1$, note that we also have $\langle \gamma | \alpha \rangle = -\langle \beta | \alpha \rangle > 0$ (we have $\langle \beta | \alpha \rangle <0$ since $\alpha$ and $\beta$ are neighbors in the Coxeter graph).  Again, it follows that $\gamma^1$ can be chosen as the fourth vertex of a square (via~\eqref{rule2}). 
    \item $\{-\beta^1,-\alpha^1,\alpha^m\}$ cannot be completed to form a square.  Assume otherwise, and let $\gamma^i$ be the fourth vertex.
    \begin{itemize}
        \item From $-\alpha^1 \mathrel{\|} \gamma^i$, we get $t_\alpha t_\gamma \leq c$. From $ \alpha^m \mathrel{\|} \gamma^i$, we get $t_\gamma t_\alpha \leq c$.  So $t_\alpha t_\gamma = t_\gamma t_\alpha$ (this follows from the fact that elements below $c$ in the absolute order is a lattice, because $t_\alpha t_\gamma$ and $t_\gamma t_\alpha$ are rank 2 elements that covers the rank 1 elements $t_\alpha$ and $t_\gamma$).  Thus, we get $\langle \alpha |\gamma \rangle = 0$.  
        \item Write $\gamma = \sum_{\rho \in \Delta} x_\rho \cdot \rho$ with real coefficients $x_\rho \geq 0$. Since $\gamma^i$ is the fourth side of the square, we have $-\beta^1 \centernot{\|} \gamma^i$, so $x_\beta > 0$.  Moreover $-\alpha^1 \mathrel{\|} \gamma^i$ by definition of the square, so $c_\alpha = 0$.  Now, $\langle \alpha, \gamma \rangle = \sum_{\rho \in \Delta} x_\rho \langle \alpha |\rho \rangle = x_\beta \langle \alpha |\beta \rangle < 0$ (since $\alpha$ and $\beta$ are neighbors in the Coxeter graph), so that $\langle \alpha |\gamma\rangle < 0$.
    \end{itemize}
    This gives a contradiction and completes this point.
    % is beta in the support of rho ? >
\end{itemize}

Since squares are preserved by the automorphism $\FF$, the triple $\{-\beta^1, -\alpha^1, \alpha^m\}$ is preserved as the unique one that cannot be completed to form a square among the triples considered above.  The result follows.    
\end{proof}

\begin{proof}[Proof of Proposition~\ref{prop:stabvertex}]
    The decomposition of $W_\beta$ in irreducible components is written
    \[
        W_\beta \simeq W_1 \times \dots \times W_k. 
    \]
    Up to reindexing, assume that $W_1$ is the factor that contains the reflection $t_\alpha$.  By Proposition~\ref{prop:monomial}, the restriction of $\FF$ on $\Gamma^{(m)}(W_\beta)$ permutes the irreducible factor of type $A_1$ in $W_\beta$.  These irreducible factor of type $A_1$ correspond to leaves of the Coxeter graph that are neighbors of $\beta$.  Clearly, any permutation of these leaves can be realized by an automorphism of the Coxeter graph that stabilizes $\beta$.  So, there is a diagram automorphism $\DD$ such that $\FF\DD$ stabilizes $\Gamma^{(m)}(W_1) = \{ -\alpha^1 \alpha^1 , \dots , \alpha^m \}$ (setwise) and $-\beta^1$.  
    
    By Lemma~\ref{lemm:squares}, $\FF\DD$ stabilizes $\{-\alpha^1,\alpha^m\}$.  If $\FF\DD$ stabilizes $-\alpha^1$ and $-\beta^1$, it follows from Lemma~\ref{prop:stab2vert} that it is a diagram automorphism.  Otherwise, $\SS\FF\DD$ stabilizes $-\alpha^1$ and $-\beta^1$, and it follows from Lemma~\ref{prop:stab2vert} that $\SS\FF\DD$ is a diagram automorphism.  This completes the proof. 
\end{proof}

%%%%%%%%%%%%%%%%%%%%%%%%%%%%%
\section{Automorphism groups}
%%%%%%%%%%%%%%%%%%%%%%%%%%%%%

\label{sec:autogroup}

\begin{lemm} \label{lemm:prod}
    We have $\aut(\Gamma^{(m)}) = \dih \cdot \diag$.
\end{lemm}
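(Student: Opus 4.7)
The plan is to reduce any $\FF \in \aut(\Gamma^{(m)})$ to an automorphism fixing $-\beta^1$ and then apply Proposition~\ref{prop:stabvertex}. First, if $W$ is reducible I would handle this via Proposition~\ref{prop:monomial}: any automorphism permutes the irreducible factors $W_i$ of $W$ in a type-preserving way, and this permutation is realized by a diagram automorphism of $W$, so combining with the induction hypothesis applied to each factor reduces to the case where $W$ is irreducible.

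For the irreducible case, set $v := \FF(-\beta^1)$. By Lemma~\ref{lemm:equirepartis}, the $\RR$-orbit of $v$ contains a vertex of the form $-\rho^1$ with $\rho \in \Delta$, so I would pick $k \in \mathbb{Z}$ with $\RR^{-k}(v) = -\rho^1$. Since $\RR^{-k}\FF$ is an automorphism, comparing links at $-\beta^1$ and $-\rho^1$ yields $\Gamma^{(m)}(W_\beta) \simeq \Gamma^{(m)}(W_\rho)$, and Theorem~\ref{theo:nonexcep} then forces $W_\beta \simeq W_\rho$ as Coxeter groups. At this point I would invoke the classification-based fact: for a finite irreducible Coxeter group $W$ and simple roots $\rho, \sigma \in \Delta$ with $W_\rho \simeq W_\sigma$, the two roots lie in the same orbit under the diagram automorphism group of $W$. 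Granting this, there exists $\DD \in \diag$ with $\DD(\rho) = \beta$, so $\DD\RR^{-k}\FF$ fixes $-\beta^1$.

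Now Proposition~\ref{prop:stabvertex} yields $\DD\RR^{-k}\FF = \DD'$ or $\SS\DD'$ for some even diagram automorphism $\DD'$ fixing $-\beta^1$. Rearranging, $\FF = \RR^k\DD^{-1}\DD'$ or $\FF = \RR^k\DD^{-1}\SS\DD'$. To conclude $\FF \in \dih \cdot \diag$, I would use that $\diag$ normalizes $\dih$: the relations $\DD''\RR\DD''^{-1} = \RR^{\pm 1}$ come from Section~\ref{sec:diagram}, and $\DD''\SS\DD''^{-1} \in \{\SS,\TT\} \subseteq \dih$ follows directly from the definition of $\SS$ (an even $\DD''$ commutes with $\SS$, while an odd $\DD''$ exchanges $\Delta_\bullet$ with $\Delta_\circ$ and $c_\bullet$ with $c_\circ$, conjugating $\SS$ to $\TT$).

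The hard part will be the case-by-case diagram-orbit claim; the other steps are routine consequences of Lemma~\ref{lemm:equirepartis}, Theorem~\ref{theo:nonexcep}, Proposition~\ref{prop:stabvertex}, and the normalization relations established in Sections~\ref{sec:diagram} and~\ref{sec:involutive}.
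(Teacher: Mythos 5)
Your proposal is correct and follows essentially the same route as the paper: use $\RR$ to move $\FF(-\beta^1)$ into $-\Delta^1$ (via Lemma~\ref{lemm:equirepartis}), compare links and apply Theorem~\ref{theo:nonexcep} to produce an aligning diagram automorphism (the paper relies on the very same classification fact you flag as the hard part, dismissing it as ``easily checked at the level of Coxeter graphs''), then finish with the stabilizer proposition. The only difference is bookkeeping: the paper composes the diagram automorphism on the right, so the resulting word already lies in $\dih\cdot\diag$ without invoking normalization, whereas you compose on the left and compensate with $\DD\SS\DD^{-1}\in\{\SS,\TT\}$ and $\DD\RR\DD^{-1}=\RR^{\pm1}$.
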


\begin{proof}
    Let $\FF \in \aut(\Gamma^{(m)})$, and $\alpha,\beta \in \Delta$ as in the previous sections.  There is an integer $i$ such that $\RR^i\FF(-\beta^1 ) \in - \Delta^1$.  
    
    If $\RR^i\FF(-\beta^1) = - \beta^1$, by Proposition~\ref{prop:stabvertex} we obtain $\RR^i\FF = \SS \DD $ or $\RR^i\FF = \DD $, where $\DD \in \diag$.  It follows $\FF \in \dih \cdot \diag$. 

    In the general case, let $\gamma \in \Delta$ be such that  $\beta \neq \gamma$ and $\RR^i\FF(-\beta^1) = -\gamma^1$.  The links at $-\beta^1$ and $-\gamma^1$ are isomorphic since $\RR^i\FF$ is an automorphism of $\Gamma^{(m)}$.  By Theorem~\ref{theo:nonexcep}, it follows that the Coxeter graphs of $W_{\beta}$ and $W_{\gamma}$ are isomorphic.  So, there is a diagram automorphism $\DD$ such that $\DD(-\gamma^1) = -\beta^1$ (this is easily checked at the level of Coxeter graphs).  So $\GG = \RR^i \FF \DD$ is such that $\GG(-\gamma^1) = -\gamma^1$.  If $\gamma \in \Delta_\circ$, we obtain $\GG \in \dih \cdot \diag$ by Proposition~\ref{prop:stabvertex}, and $\FF \in \dih \cdot \diag$ follows.  Otherwise, by black/white symmetry there is an analog statement (with $\TT$ in place of $\SS$), and we get $\GG \in \dih \cdot \diag$ again.
\end{proof}

\begin{lemm} \label{lemm:inter}
    We have $\dih \mathrel{\cap} \diag = \langle \CC \rangle$.
\end{lemm}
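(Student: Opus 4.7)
\emph{Plan.} One inclusion is immediate: $\CC \in \dih$ by Proposition~\ref{prop:c_in_dih} and $\CC \in \diag$ by construction, so $\langle \CC \rangle \subseteq \dih \cap \diag$.

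For the reverse inclusion, I will use the characterization stated at the end of Section~\ref{sec:diagram} (which follows from Lemma~\ref{lemm:stab_negface}): $\diag$ is precisely the setwise stabilizer of $-\Delta^1$ inside $\aut(\Gamma^{(m)})$. Thus any $\FF \in \dih \cap \diag$ must permute $-\Delta^1$, and my goal is to show this forces $\FF \in \{\II, \CC\}$. The analysis splits according to the dihedral decomposition $\dih = \langle \RR \rangle \cup \SS\langle\RR\rangle$.

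In the rotation case $\FF = \RR^k$, Lemma~\ref{lemm:equirepartis} says that each $\RR$-orbit meets $-\Delta^1$ in either one point (in orbits of size $(mh+2)/2$) or in two points at cyclic distance $(mh+2)/2$ (in orbits of size $mh+2$). Requiring $\RR^k$ to permute each such intersection forces $k$ to be a multiple of $(mh+2)/2$ modulo $|\RR|$, leaving only $\FF = \II$ or $\FF = \RR^{(mh+2)/2}$. By Lemma~\ref{lemm:Rpow}, the latter equals $\CC$ whenever $h$ is even; when $h$ is odd, the order $|\RR| = mh+2$ is odd and this second option does not arise.

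For the reflection case $\FF = \RR^k \SS$, I distinguish by parity of $h$. When $h$ is odd (types $A_{2n}$ and $I_2(2k+1)$), the involution $\CC \in \dih$ cannot lie in $\langle \RR \rangle$ (which has odd order), so $\CC$ is itself a reflection; then $\CC\FF$ is a rotation in $\dih \cap \diag$, so by the previous paragraph $\CC\FF = \II$, whence $\FF = \CC$. When $h$ is even, $\CC = \RR^{(mh+2)/2}$ is a rotation and I must rule out reflections entirely; this reduces to a block-level check using the decomposition~\eqref{eq:phi_decomp} and Remark~\ref{rema:phi_decomp}, exploiting the fact that $\SS$ sends $-\Delta_\bullet^1$ to the block $\Delta_\bullet^m$ (case~(ii) of~\eqref{eq:defS}), whose position in the cyclic ordering no subsequent power of $\RR$ can realign with $-\Delta^1 = -\Delta_\circ^1 \cup -\Delta_\bullet^1$. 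The main obstacle is this parity bookkeeping, but it is an elementary verification once the cyclic block action is in hand.
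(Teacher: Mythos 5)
Your overall strategy---split $\dih$ into rotations $\RR^k$ and reflections $\RR^k\SS$, and use the characterization of $\diag$ as the setwise stabilizer of $-\Delta^1$ together with Lemma~\ref{lemm:equirepartis}---is essentially the paper's, but your rotation case contains a genuine error. You assert that in an $\RR$-orbit of size $mh+2$ the two elements of $-\Delta^1$ sit at cyclic distance $(mh+2)/2$. This holds when $h$ is even (it is essentially Lemma~\ref{lemm:Rpow}), but it fails when $h$ is odd and $m$ is even: in type $A_2$ with $m=2$ the unique orbit is
\[
-\alpha^1 \to \alpha^1 \to \alpha^2 \to \beta^1 \to \beta^2 \to -\beta^1 \to (\alpha+\beta)^1 \to (\alpha+\beta)^2 \to -\alpha^1,
\]
so $-\alpha^1$ and $w_0(\alpha)^1=-\beta^1$ are at distance $5$, not $(mh+2)/2=4$. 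Your fallback for $h$ odd, namely that $|\RR|=mh+2$ is odd, also fails whenever $m$ is even. So precisely in the case ($h$ odd, $m$ even) your argument neither yields the correct congruence on $k$ nor excludes $\RR^{(mh+2)/2}$, and since your $h$-odd reflection case is reduced to the rotation case by multiplying by $\CC$, the gap propagates there as well. The paper sidesteps distances entirely: by Lemma~\ref{lemm:equirepartis} the orbit of $-\rho^1$ meets $-\Delta^1$ only in $-\rho^1$ and $w_0(\rho)^1$, so a power $\RR^i\in\diag$ must agree on $-\Delta^1$ with either $\II$ or $\CC$, and Lemma~\ref{lemm:stab_negface} then forces $\RR^i\in\{\II,\CC\}$; for $h$ odd it simply observes that $\diag=\{\II,\CC\}$, so no analysis of reflections is needed at all.

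The $h$-even reflection case is also left as an unverified sketch, and the key assertion as stated (no power of $\RR$ can realign the block $\Delta_\bullet^m$ with $-\Delta^1$) is not literally correct: one can realign it with $-\Delta_\bullet^1$; what is impossible for $h$ even is realigning the two blocks $-\Delta_\circ^1$ and $-\Delta_\bullet^1$ \emph{simultaneously}, so the bookkeeping must track both, and the relevant congruence must be checked to be unsolvable modulo $mh+2$. This can be made to work, but the paper's substitute is a two-line argument you should adopt: for $\rho\in\Delta_\circ$ the map $\SS$ fixes $-\rho^1$, so $\RR^i\SS\in\diag$ gives $\RR^i(-\rho^1)\in-\Delta^1$, hence $i\equiv 0 \pmod{(mh+2)/2}$ by Lemma~\ref{lemm:Rpow}, hence $\RR^i\in\{\II,\CC\}\subset\diag$ and therefore $\SS\in\diag$, a contradiction.
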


\begin{proof}   
    Note that $\CC \in \dih \mathrel{\cap} \diag$, by Proposition~\ref{prop:c_in_dih}.  It remains to check that there is no other diagram automorphism in $\dih$. 
    
    Suppose that $\RR^i \in \diag$ for some integer $i$.  By Proposition~\ref{lemm:equirepartis}, for any $\rho\in\Delta$ we have $\RR^i(-\rho^1) = -\rho^1 $ or $\RR^i(-\rho^1) = w_0(\rho)^1 $.   It follows that $\RR^i$ is either $\II$ or $\CC$.  It remains only to show that $ \RR^i \SS$ cannot be a diagram automorphism other than $\II$ or $\CC$.

    Let us first consider the case where $h$ is odd, {\it i.e.}, $W$ has type $A_{2n}$.  (It is treated separately because this is the only case where $\CC = \RR^i \SS$ for some integer $i$.)  Since $\diag = \{\II,\CC\}$, there is nothing to prove.  We assume that $h$ is even in the rest of this proof. 

    It remains to show $ \RR^i \SS \notin \diag $.  Assume otherwise and let $\rho \in \Delta_\circ$.  We get $\RR^i (  -\rho^1 ) = \RR^i \SS (  -\rho^1 ) \in -\Delta^1$.  Since $h$ is even, by Lemma~\ref{lemm:Rpow} it follows that $ i \equiv 0 \mod \frac{mh+2}{2}$.  But we then have $\RR^i = \II$ or $\RR^i = \CC$, {\it i.e.}, $\RR^i \in \diag$.  This is not possible since $\SS \notin \diag$.  
\end{proof}

\begin{lemm} \label{lemm:dist}
    We have $\dih \triangleleft \aut(\Gamma^{(m)})$.
\end{lemm}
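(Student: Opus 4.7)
The plan is to reduce normality of $\dih$ to two conjugation computations on its generators. Since $\dih$ clearly normalizes itself and $\aut(\Gamma^{(m)}) = \dih \cdot \diag$ by Lemma~\ref{lemm:prod}, it will suffice to show that $\DD \dih \DD^{-1} \subseteq \dih$ for every $\DD \in \diag$, and since $\dih = \langle \RR, \SS\rangle$, it is enough to check this on the generators. For $\RR$ the computation has already been carried out in Section~\ref{sec:diagram}: $\DD\RR\DD^{-1}$ equals $\RR$ when $\DD$ is even and $\RR^{-1}$ when $\DD$ is odd, so it lies in $\langle\RR\rangle \subseteq \dih$ in either case.

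The remaining claim is that
\[
    \DD\SS\DD^{-1} = \begin{cases} \SS & \text{if } \DD \text{ is even}, \\ \TT & \text{if } \DD \text{ is odd}, \end{cases}
\]
which in both cases places the result in $\dih$: trivially for $\SS$, and for $\TT$ via the identity $\SS\TT = \RR^m$ recorded at the end of Section~\ref{sec:involutive}, which gives $\TT = \SS\RR^m \in \dih$. I would prove this formula by running through the five clauses (i)--(iv) of~\eqref{eq:defS} one at a time. The key inputs are that an even $\DD$ preserves both $\Delta_\bullet$ and $\Delta_\circ$, commutes with $c_\bullet$, and satisfies $\DD(\alpha^i) = \DD(\alpha)^i$ on every vertex, whereas an odd $\DD$ swaps $\Delta_\bullet$ with $\Delta_\circ$, conjugates $c_\bullet$ to $c_\circ$, and additionally reverses colors by $i \mapsto m+1-i$ on positive-root vertices. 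In the even case the five clauses commute past $\DD$ clause-by-clause; in the odd case the color shifts prescribed by $\SS$ interact with the color reversal of $\DD$ to reproduce exactly the four clauses of the definition of $\TT$.

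The main obstacle is precisely the odd case of the second paragraph. The difficulty is not conceptual---each individual case reduces to a one-line identity---but some careful bookkeeping is required to track simple-root types (lying in $\Delta_\bullet$, $\Delta_\circ$, or neither) together with color indices ($i$, $m+1-i$, $m-i$, \ldots), and to verify that each of the five clauses of $\SS$ does land in the expected clause of $\TT$ (e.g.\ clauses (ii) and (ii') of $\SS$ swap to clauses (i) and (iv) of $\TT$ respectively, while clause (iv) of $\SS$ splits into clauses (iii) and (iv) of $\TT$ according to whether the positive root argument is simple or not). Once this matching is checked, normality of $\dih$ in $\aut(\Gamma^{(m)})$ follows at once.
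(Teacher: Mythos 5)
Your proof is correct, but it takes a genuinely different route from the paper. You establish normality ``from below'': using $\aut(\Gamma^{(m)}) = \dih\cdot\diag$ you reduce to showing that $\diag$ normalizes $\dih$, and you do this by computing the conjugates of the generators explicitly, namely $\DD\RR\DD^{-1}=\RR^{\pm1}$ (already recorded in Section~\ref{sec:diagram}) and $\DD\SS\DD^{-1}=\SS$ or $\TT$ according to the parity of $\DD$, with $\TT=\SS\RR^m\in\dih$. I checked the claimed identity $\DD\SS\DD^{-1}=\TT$ for odd $\DD$ clause by clause and it does hold, so the argument goes through; the cost is exactly the bookkeeping you describe. (One small slip in your illustrative matching: clauses (ii) and (ii') of $\SS$ both land in clause (ii) of $\TT$ --- its $-\Delta_\circ$ and $\Delta_\circ$ sub-cases respectively --- while clause (iv) of $\SS$ lands entirely in clause (iv) of $\TT$ and clause (iii) in clause (iii); nothing ``splits''. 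This does not affect the conclusion.) The paper instead argues ``from above'' with soft group theory split by type: when $|{\diag}|=2$ (all irreducible types except $D_4$), Lemmas~\ref{lemm:prod} and~\ref{lemm:inter} force $[\aut(\Gamma^{(m)}):\dih]\leq 2$, and index-$\leq 2$ subgroups are normal; the remaining type $D_4$ has only even diagram automorphisms, which commute with $\dih$. The paper's proof is shorter but leans on the finite-type classification and on the two preceding lemmas; yours is uniform, avoids the classification, and yields the extra structural information that conjugation by an odd diagram automorphism exchanges $\SS$ and $\TT$, at the price of a routine but unavoidable case verification.
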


\begin{proof}
    If $|{\diag}| = 2$, we have $[ \,\aut(\Gamma^{(m)}) : \dih \,] \leq 2$ by the previous proposition and the result follows.  This covers all cases except $D_4$. 

    If $\diag$ contains only even diagram automorphisms, it commutes with $\dih$ and the result follows from the previous lemma.  This cover all cases except $F_4$, $A_n$ ($n$ even), $I_2(k)$.
\end{proof}

\begin{theo}
    We have: 
    \[
        \aut(\Gamma^{(m)})
        =   
        \dih \rtimes (\diag/\langle \CC\rangle).
    \]
\end{theo}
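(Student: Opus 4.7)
The plan is to assemble the three lemmas just proved. Lemma~\ref{lemm:dist} yields $\dih \triangleleft \aut(\Gamma^{(m)})$, and Lemmas~\ref{lemm:prod} and~\ref{lemm:inter} together with the second isomorphism theorem identify the quotient $\aut(\Gamma^{(m)})/\dih$ with $\diag/\langle\CC\rangle$. To promote this short exact sequence to a \emph{semidirect} product I need a splitting, i.e., a subgroup $K \leq \aut(\Gamma^{(m)})$ with $K \cap \dih = \{\II\}$ and $K \cdot \dih = \aut(\Gamma^{(m)})$, equipped with an isomorphism $K \cong \diag/\langle\CC\rangle$.

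I would look for $K$ inside $\diag$ itself, namely as a complement to $\langle\CC\rangle$ in $\diag$. Any such $K$ automatically works in the ambient group: using Lemma~\ref{lemm:inter} we get $K \cap \dih = K \cap (\dih \cap \diag) = K \cap \langle\CC\rangle = \{\II\}$, and using Lemma~\ref{lemm:prod} we get $K \cdot \dih \supseteq K \cdot \langle\CC\rangle \cdot \dih = \diag \cdot \dih = \aut(\Gamma^{(m)})$. Furthermore, the natural projection $\diag \to \diag/\langle\CC\rangle$ restricted to $K$ is the desired isomorphism $K \cong \diag/\langle\CC\rangle$.

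It remains to exhibit such a $K$, which is a short case analysis on whether $\CC$ is trivial. If $\CC = \II$, which occurs exactly when $-w_0$ acts trivially on $\Delta$, I simply take $K = \diag$. If $\CC \neq \II$, then $-w_0$ acts nontrivially on the Coxeter graph, which by the finite-type classification restricts $W$ to $A_n$ with $n \geq 2$, $D_n$ with $n \geq 5$ odd, $E_6$, or $I_2(k)$ with $k \geq 3$ odd; in each of these cases the full group $\diag$ is cyclic of order $2$, generated by the unique nontrivial diagram automorphism, which is precisely $\CC$. Hence $\diag = \langle\CC\rangle$ and $K = \{\II\}$ serves as the (trivial) complement, so $\aut(\Gamma^{(m)}) = \dih$ in that regime. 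In either case we conclude $\aut(\Gamma^{(m)}) = \dih \rtimes K$ with $K \cong \diag/\langle\CC\rangle$. The only real obstacle is the classification step: verifying type by type that whenever $-w_0$ is nontrivial on $\Delta$, it already exhausts the graph-automorphism group of the Coxeter diagram.
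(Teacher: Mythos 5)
Your proposal is correct and follows essentially the same route as the paper: both arguments reduce to the dichotomy that either $\CC=\II$ (so $\diag$ itself is a complement to $\dih$, using Lemmas~\ref{lemm:prod}, \ref{lemm:inter}, \ref{lemm:dist}) or $\CC\neq\II$, in which case the classification forces $\diag=\langle\CC\rangle$ and $\aut(\Gamma^{(m)})=\dih$. Your write-up merely makes explicit the splitting $K$ and the type-by-type check that a nontrivial $\CC$ exhausts $\diag$, which the paper leaves implicit.
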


\begin{proof}
    First note that $\langle \CC\rangle \triangleleft \diag$, because its index is at most 2 (in all types except $D_4$) or because it is the trivial subgroup (to include $D_4$).  
    
    If $\langle\CC\rangle$ is the trivial subgroup, the result follows from the previous lemmas.  If $\diag = \langle\CC\rangle$, we get $\aut(\Gamma^{(m)}) = \dih$ and the result is clear.  All cases are covered.
\end{proof}

\begin{coro}
    Let $\omega = |{\diag}|$. We have:
    \[
        \big|\aut(\Gamma^{(m)})\big| = (mh+2)\omega. 
    \]
\end{coro}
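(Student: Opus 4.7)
The plan is to feed the semidirect product decomposition just established into a direct order computation. From the theorem above, $|\aut(\Gamma^{(m)})| = |\dih| \cdot |\diag/\langle\CC\rangle| = |\dih|\cdot\omega/|\langle\CC\rangle|$, so the claim reduces to verifying the identity $|\dih|/|\langle\CC\rangle| = mh+2$.

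To establish this I would use that $\dih$ is dihedral of order $2|\RR|$, since it is generated by $\RR$ and the involution $\SS$ with $\SS\RR\SS=\RR^{-1}$ (proved in Section~\ref{sec:involutive}). I would then split into two cases according to whether $\CC$ is trivial, combining each with the orbit-size dichotomy of Lemma~\ref{lemm:equirepartis}. When $\CC=\II$, i.e.\ $w_0$ acts as $-\II$ on the root system, the second alternative of that lemma cannot occur (the two elements $-\rho^1$ and $w_0(\rho)^1$ would coincide), so every $\RR$-orbit has size $(mh+2)/2$ and hence $|\RR|=(mh+2)/2$, while $|\langle\CC\rangle|=1$. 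When $\CC\neq\II$ it is an involution, so $|\langle\CC\rangle|=2$; choosing $\rho\in\Delta$ with $\CC(\rho)\neq\rho$ yields an $\RR$-orbit of the second type (size $mh+2$), and since $\RR^{mh+2}$ acts trivially on every orbit, this forces $|\RR|=mh+2$. In both cases $2|\RR|/|\langle\CC\rangle| = mh+2$, as needed.

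There is no real obstacle here: once the semidirect product theorem has been proved, this corollary is essentially bookkeeping, and the only substantive ingredient is the orbit analysis of Lemma~\ref{lemm:equirepartis}, which has already been cited. If anything is delicate, it is only checking that the two possible values of $|\RR|$ line up correctly with the two possible values of $|\langle\CC\rangle|$, and that matching is built into Lemma~\ref{lemm:equirepartis} itself through the role of $w_0$.
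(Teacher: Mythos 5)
Your proposal is correct and follows essentially the same route as the paper: both reduce the claim via the semidirect product to $|\dih|\cdot\omega/|\langle\CC\rangle|$ with $|\dih|=2|\RR|$, and both then invoke Lemma~\ref{lemm:equirepartis}. The only difference is that you re-derive the identity $|\RR|=\tfrac{mh+2}{2}\,|\langle\CC\rangle|$ from the orbit-size dichotomy, whereas the paper simply cites the displayed equation~\eqref{eq:orderR} (together with $|w_0|=|\CC|$), which packages the same case analysis.
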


\begin{proof}
    By the previous theorem, we have
    \[
         \big|\aut(\Gamma^{(m)})\big|
         =
         \frac{|{\dih}|}{|\CC|}\omega.
    \]
    Moreover, $|{\dih}| = 2|\RR|$.  We have $|{\RR}| = \frac{mh+2}{2}|{w_0}|$ by~\eqref{eq:orderR}, and clearly $|{w_0}| = |{\CC}|$.  Putting this together completes the proof.    
\end{proof}

The final step is the following generalization of Proposition~\ref{prop:stabvertex}, without restriction on the chosen vertex of the Coxeter diagram.  Recall that it was used in the proof of Proposition~\ref{prop:stab2vert} (where it was assumed to hold via an induction hypothesis).

\begin{prop} \label{prop:stabvertex2}
%    Let $\FF \in \aut(\Gamma^{(m)})$ be such that $\FF(-\alpha) = -\alpha$ for some $\alpha \in \Delta_\circ$ (respectively, $\alpha \in \Delta_\bullet$).  Assume that $\alpha$ is the neighbor of at least one leaf in the Coxeter diagram of $W$. 
    Let $\rho \in \Delta_\circ$ and $\FF \in \aut(\Gamma^{(m)})$ be such that $\FF(-\rho^1) = -\rho^1$.  Then we have either $\FF = \DD$ or $\FF = \SS \DD$, where $\DD$ is an even diagram automorphism such that $\DD(-\rho^1) = -\rho^1$.  (And by black/white symmetry, there is a similar statement with $\Delta_\bullet$ and $\TT$ in place of $\Delta_\circ$ and $\SS$.)
\end{prop}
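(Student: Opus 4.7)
The plan is to use Lemma~\ref{lemm:prod} (established just above), namely $\aut(\Gamma^{(m)})=\dih\cdot\diag$, and write $\FF=\mathcal{H}\DD_0$ with $\mathcal{H}\in\dih$ and $\DD_0\in\diag$. The hypothesis $\FF(-\rho^1)=-\rho^1$ rewrites as $\mathcal{H}(-\DD_0(\rho)^1)=-\rho^1$, so the two negative-simple-root vertices $-\rho^1$ and $-\DD_0(\rho)^1$ lie in a common $\RR$-orbit. By Lemma~\ref{lemm:equirepartis} this forces $\DD_0(\rho)\in\{\rho,\CC(\rho)\}$. Since $\CC$ lies in both $\dih$ (Proposition~\ref{prop:c_in_dih}) and $\diag$ and satisfies $\CC^2=\II$, I will absorb a factor of $\CC$ on both sides---replacing $\DD_0$ by $\CC\DD_0$ and $\mathcal{H}$ by $\mathcal{H}\CC$---to reduce to the case $\DD_0(\rho)=\rho$.

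Under this reduction, $\mathcal{H}$ itself fixes $-\rho^1$, and I next compute its stabilizer in $\dih$. Every element of $\dih$ is of the form $\RR^i$ or $\RR^i\SS$, and case~(i) of the definition of $\SS$ gives $\SS(-\rho^1)=-\rho^1$ because $\rho\in\Delta_\circ$. Hence the $\dih$-stabilizer equals $\{\II,\SS\}\cdot\mathrm{Stab}_{\langle\RR\rangle}(-\rho^1)$, and by Lemma~\ref{lemm:equirepartis} the $\langle\RR\rangle$-stabilizer is either trivial (orbit of size $mh+2$) or generated by the unique involution $\RR^{(mh+2)/2}$ (orbit of size $(mh+2)/2$, which additionally forces $\CC(\rho)=\rho$). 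The central step will be to show $\RR^{(mh+2)/2}=\CC$ whenever this involution appears: Lemma~\ref{lemm:Rpow} gives this directly when $h$ is even, and in the remaining type $A_{2n}$ case one can use Lemma~\ref{lemm:equirepartis} to see that $\RR^{(mh+2)/2}$ setwise stabilizes $-\Delta^1$ and acts there as $-\sigma^1\mapsto-\CC(\sigma)^1$, so it is a diagram automorphism that must coincide with $\CC$.

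Consequently $\mathcal{H}\in\{\II,\SS,\CC,\CC\SS\}$. In the case where $\CC$ is present, $\CC(\rho)=\rho\in\Delta_\circ$ forces $\CC$ to preserve the bipartition, hence to be even, and a short verification (using again the setwise-stabilizer characterization of $\diag$) shows $\SS\CC\SS=\CC$, so $\CC$ and $\SS$ commute. Absorbing $\CC$ into the diagram factor then gives $\FF=\DD$ or $\FF=\SS\DD$ with $\DD\in\{\DD_0,\CC\DD_0\}\subset\diag$, and in each case $\DD(\rho)=\rho$; since $\rho\in\Delta_\circ$, such a $\DD$ is automatically even. I expect the main technical obstacle to be the identification $\RR^{(mh+2)/2}=\CC$ outside the range of Lemma~\ref{lemm:Rpow}, and to a lesser extent the commutation $\SS\CC=\CC\SS$ in the relevant case; apart from these verifications the argument reduces to elementary bookkeeping inside the dihedral group $\dih$.
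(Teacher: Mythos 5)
Your proposal is correct, but it follows a genuinely different route from the paper. The paper deduces the statement from the order formula $\big|\aut(\Gamma^{(m)})\big|=(mh+2)\omega$ together with the orbit--stabilizer theorem: it computes the cardinality of the stabilizer of $-\rho^1$ and then exhibits enough elements of the form $\DD$ or $\SS\DD$ to exhaust it, which requires a short case analysis ($\omega=2$ with a small orbit, and type $D_4$ separately). You instead start from the product decomposition $\aut(\Gamma^{(m)})=\dih\cdot\diag$ of Lemma~\ref{lemm:prod}, constrain the diagram factor via Lemma~\ref{lemm:equirepartis} (which correctly yields $\DD_0(\rho)\in\{\rho,\CC(\rho)\}$, since the second negative simple root in an $\RR$-orbit is $w_0(\rho)^1=-\CC(\rho)^1$), and then compute the $\dih$-stabilizer of $-\rho^1$ by hand. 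There is no circularity: Lemma~\ref{lemm:prod} is established before Proposition~\ref{prop:stabvertex2} and does not rely on it in the same rank. Your approach buys uniformity (no type-by-type discussion of $\omega$ and orbit sizes, and no appeal to the order formula), at the cost of some bookkeeping inside the dihedral group; the paper's is shorter given that the counting machinery is already in place. One small unjustified step on your side: that $\mathcal{H}(-\DD_0(\rho)^1)=-\rho^1$ with $\mathcal{H}\in\dih$ places the two vertices in a common $\RR$-orbit uses that $\SS(-\sigma^1)$ is either $-\sigma^1$ (for $\sigma\in\Delta_\circ$) or $\sigma^m=\RR^m(-\sigma^1)$ (for $\sigma\in\Delta_\bullet$); this is immediate from the definition of $\SS$ but should be said.

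The one inaccuracy is your treatment of the ``remaining type $A_{2n}$ case'' for the identification $\RR^{(mh+2)/2}=\CC$. That case is in fact vacuous: the $\langle\RR\rangle$-stabilizer of $-\rho^1$ is nontrivial only when the orbit has size $\frac{mh+2}{2}$ while $|\RR|=mh+2$, i.e.\ when $\CC\neq\II$ and $\CC(\rho)=\rho$; by the classification ($A_{2k+1}$, $D_{2k+1}$, $E_6$ being the only possibilities) this forces $h$ even, so Lemma~\ref{lemm:Rpow} always applies. Moreover, your proposed fallback argument would actually fail as stated: in type $A_{2n}$ one has $\CC=\RR^i\SS$ and $\CC\notin\langle\RR\rangle$ (see the proof of Lemma~\ref{lemm:inter}), so $\RR^{(mh+2)/2}\neq\CC$ there. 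Since the case never arises, this does not affect the validity of your proof, but the sentence should be replaced by the observation that the involution only appears when $h$ is even.
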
 

\begin{proof}
    We use the previous theorem and the orbit-stabilizer theorem.  This suffices to show that the stabilizer contains only the listed elements.  In most cases, the stabilizer has order 2 and it follows that it is $\{ \II, \SS\}$.  We only give details about the other cases:
    \begin{itemize}
%        \item If $\omega = 1$, we have $\CC=\II$ and the $\RR$-orbit of $-\rho^1$ has cardinality $\frac{mh+2}{2}$.  So, $\# X = \frac{mh+2}{2}$. The stabilizer has size $2$, so it is $\{\II,\SS\}$.
%        \item If $\omega = 2$ and $\RR$-orbit of $-\rho^1$ has cardinality $mh+2$, $X$ is the $\RR$-orbit of $-\rho^1$ (since $\CC\in\dih$).  So, $\# X = mh+2$.  The stabilizer has size $2$, so it is $\{\II,\SS\}$.
        \item If $\omega = 2$ and the orbit of $-\rho^1$ has cardinality $\frac{mh+2}{2}$: it means that the stabilizer has order 4, moreover $-\rho^1$ is fixed by the nontrivial element $\DD \in \diag$.  It follows that the stabilizer is $\{ \II, \SS, \DD, \SS\DD \}$ (we have $\SS\DD = \DD\SS$ since $\DD$ is even).
        \item In the the case of $D_4$, $\omega=6$ and the orbit of $-\rho^1$ has cardinality $\frac{mh+2}{2}$.  The stabilizer has cardinality $4$ if $\rho$ is a leaf of the Coxeter diagram and $12$ if it is the central vertex.  There are 2 elements of $\diag$ stabilizing $-\rho^1$ in the first case, and 6 in the second case.  This completes the proof.
    \end{itemize}
All cases have been covered.
\end{proof}

%Does the previous theorem imply that the stabilizer of a vertex $\alpha$ is generated by $\SS$ and diagram automorphisms fixing $\alpha$ ?  Hypothesis H1

%If this holds, l'induction de la section 5 tient la route !!!!

%Il suffit de regarder le cas d'une feuille. 

%The orbit-stabilizer theorem says that the cardinality of the stabilizer is given as follows:

%0)  $A_n$:  2
%    $B_n$:  1
%    $D_2n$:  
%    $E_6$: 2x 2 + 1x 4
%    $F_4$: 2x 2
%    H: 2
%    I
    
%i) if the orbit of $\alpha$ has size $mh+2$, its stabilizer has size $\omega$, the stabilizer inside diagram automorphisms has size 1, 

%ii)if the orbit of $\alpha$ has size $(mh+2)/2$, 

%%%%%%%%%%%%%%%%%%%%%%%
\section{Final remarks}
%%%%%%%%%%%%%%%%%%%%%%%

\label{sec:final}

Understanding the automorphism group of a combinatorial object is certainly interesting on its own, but let us mention some problems that motivate the present work.  

{\it Cluster parking functions} were recently introduced in a joint work with Douvropoulos~\cite{douvropoulosjosuatverges}.  They can be seen combinatorially as faces of $\Gamma^{(m)}$ with additional structure (essentially a labeling by a coset in $W$).  It is conjectured that symmetries of $\Gamma^{(m)}$ have a counterpart in cluster parking functions (for example, see~\cite[Open problem~9.3]{douvropoulosjosuatverges}).  This is completely apparent in the combinatorial models in types $A$ and $B$, and a general Coxeter-theoretic result in this direction would be very interesting. 

As briefly mentioned in the introduction, the generalized cluster complex has a natural representation-theoretic interpretation (via an orbit category in the derived category of certain algebras, see~\cite{buan} for a survey).  In this context, $\RR$ naturally identifies with a {\it shift functor}.  It would be very interesting to also give a representation-theoretic interpretation of the involutive automorphisms in this context.


\begin{thebibliography}{xx}

\bibitem{assemschiffler}
    \textsc{Ibrahim Assem, Ralf Schiffler, Vasilisa Shramchenko}:
    {\it Cluster automorphisms}.
    Proc. Lond. Math. Soc. (3) 104, No. 6, 1271--1302 (2012).

\bibitem{assemschiffler2}
    \textsc{Ibrahim Assem, Ralf Schiffler, Vasilisa Shramchenko}:
    {\it Cluster automorphisms and compatibility of cluster variables}.
    Glasg. Math. J. 56, No. 3, 705--720 (2014).

\bibitem{bradywatt}
    \textsc{Tom Brady, Colum Watt}:
    {\it Non-crossing partition lattices in finite real reflection groups}.
    Trans. Amer. Math. Soc. 360(4) (2008), 1983--2005.

\bibitem{buan}
    \textsc{Aslak Bakke Buan}:
    {\it An introduction to higher cluster categories.}
    Bull. Iran. Math. Soc. 37, No. 2, Part 2, 137--157 (2011).

\bibitem{changzhu}
    \textsc{Wen Chang, Bin Zhu}:
    {\it Cluster automorphism groups of cluster algebras of finite type}.
    J. Algebra 447, 490--515 (2016).

\bibitem{changzhu2}
    \textsc{Wen Chang, Bin Zhu}:
    {\it Cluster automorphism groups and automorphism groups of exchange graphs}.
    Pac. J. Math. 307, No. 2, 283--302 (2020).

\bibitem{douvropoulosjosuatverges}
    \textsc{Theo Douvropoulos, Matthieu Josuat-Vergès}:
    {\it The generalized cluster complex: refined enumeration of faces and related parking spaces}.
    Symmetry, Integrability and Geometry: Methods and Applications (SIGMA) 19 (2023), 069, 40 pages.

\bibitem{douvropoulosjosuatverges2}
    \textsc{Theo Douvropoulos, Matthieu Josuat-Vergès}:
    {\it Cluster parking functions}, Preprint,  \url{https://arxiv.org/abs/2402.03052}.

\bibitem{fominreading}
    \textsc{Sergey Fomin, Nathan Reading}:  
    {\it Generalized cluster complexes and Coxeter combinatorics.}
    Int. Math. Res. Notices 2005, no. 44, 2709--2757.

\bibitem{fominzelevinsky}
    \textsc{Sergey Fomin, Andrei Zelevinsky}:
    {\it Y-systems and generalized associahedra}.
    Ann. Math. (2) 158(3) (2003), 977--1018.

\bibitem{liuli}
    \textsc{Siyang Liu, Fang Li}:
    {\it Periodicities in cluster algebras and cluster automorphism groups}.
    Algebra Colloq. 28, No. 4, 601--624 (2021).

\bibitem{stumpthomaswilliams}
    \textsc{Christian Stump, Hugh Thomas, Nathan Williams}:
    {\it Cataland: why the Fuß ?}
    Discrete Math. Theor. Comput. Sci., Proc., 1123-1134 (2020).  Preprint, \url{https://arxiv.org/abs/1503.00710}.

\bibitem{thomas}
    \textsc{Hugh Thomas}: 
    {\it Defining an m-cluster category}.
    J. Algebra 318(1) (2007), 37--46.

\bibitem{tzanaki}
    \textsc{Eleni. Tzanaki}:
    {\it Faces of generalized cluster complexes and noncrossing partitions.}
    SIAM J. on Disc. Math. 22 (2008), 15--30.

\bibitem{zhu}
    \textsc{Bin Zhu}:
    {\it Generalized cluster complexes via quiver representations}.
    J. Algebr. Comb. 27(1) (2008), 35--54.

\end{thebibliography}
\end{document}